\definecolor{blue}{rgb}{0.00,0.00,1.00}
\definecolor{red}{rgb}{1.00,0.00,0.00}
\renewcommand{\baselinestretch}{1.2}
\def\bq{\begin{equation}}
\def\eq{\end{equation}}
\def\ba{\begin{array}{ccc}}
\def\bal{\begin{array}{lll}}
\def\ea{\end{array}}
\def\dint{\displaystyle\int} 
\def\({\left(}\def\){\right)}
\def\[{\left[}\def\]{\right]}
    \def\intr {\int_{\R^3}}
    \def \R    {\mathbb{R}}
    \def \S    {\mathbb{S}}
    \def \N    {\mathbb{N}}
    \def \C    {\mathbb{C}}
    \def \F    {\mathcal{F}}
    \def \O    {\mathbb{O}}
    \def \dt    {\partial_t}
    \def \dx    {\partial_x}
    \def \dxa     {\partial_x^\alpha}
    \def \da      {\partial^\alpha}
    \def \lxtwo   {L_x^2}
    \def \lkvtwo  {L_{k,v}^2}
    \def \lxvtwo  {L_{x,v}^2}
    \def \Tdx    {\nabla_x}
       \def\bq{\begin{equation}}
       \def\eq{\end{equation}}
       \def\be{\begin{equation}}
       \def\ee{\end{equation}}
       \def\bma#1\ema{{\allowdisplaybreaks\begin{align}#1\end{align}}}
       \def\bmas#1\emas{{\allowdisplaybreaks\begin{align*}#1\end{align*}}}
       \def\bln#1\eln{{\allowdisplaybreaks\begin{aligned}#1\end{aligned}}}
       \def\nnm{\notag}
       \def\bgr#1\egr{\allowdisplaybreaks\begin{gather}#1\end{gather}}
       \def\bgrs#1\egrs{\allowdisplaybreaks\begin{gather*}#1\end{gather*}}
       \theoremstyle{plain}
       \newtheorem{lem}{\bf Lemma}[section]
       \newtheorem{thm}[lem]{\textbf{Theorem}}
       \newtheorem{remark}[lem]{\bf Remark}
\begin{document}


\title{Spectrum analysis for the relativistic Boltzmann equation}
\author{ Shijia Zhao$^1$,\, Mingying Zhong$^2$\\[2mm]
 \emph
    {\small\it $^1$College of  Mathematics and Information Sciences,
    Guangxi University, China.}\\
    {\small\it E-mail: shijia\_zhao@163.com} \\
    {\small\it  $^2$College of  Mathematics and Information Sciences,
    Guangxi University, China.}\\
    {\small\it E-mail:\ zhongmingying@sina.com}\\[5mm]
    }
\date{ }

 \maketitle

 \thispagestyle{empty}

\begin{abstract}\noindent{
The spectrum structure of the linearized relativistic Boltzmann equation around a global Maxwellian is studied in this paper. Based on the spectrum analysis, we establish the optimal time-convergence rates of the global solution to the Cauchy problem for the relativistic Boltzmann equation.
}

\medskip
 {\bf Key words}. relativistic Boltzmann equation, spectrum analysis, optimal time decay rates.

\medskip
 {\bf 2010 Mathematics Subject Classification}. 76P05, 82C40, 82D05.
\end{abstract}

%
\tableofcontents

\section{Introduction}
\label{sect1}
\setcounter{equation}{0}
In this paper, we consider the Cauchy problem for the relativistic Boltzmann equation
\be\left\{\bln\label{rb1}
 &\partial_t F+\hat{v}\cdot\nabla_x F=Q(F,F),\\
 & F(0,x,v)=F_0(x,v),
\eln\right.\ee
where $F=F(t,x,v)$ is the distribution function with $(t,x,v)\in \R^+\times \R_x^3\times \R_v^3$ and the relativistic velocity $\hat{v}$ is defined by
$$
\hat{v}=\frac{v}{v_0},\quad v_0=\sqrt{1+|v|^2}.
$$
The collision operator $Q(F,G)$ is given by
\bq
Q(F,G)(v)=\int_{\R^3}\int_{\S^2}v_M\sigma(g,\theta)[F(v')G(u')-F(u)G(u)]dud\omega,
\eq
where
\bmas
&\ v_M=\frac{2g\sqrt{1+g^2}}{u_0v_0},\quad u_0=\sqrt{1+|u|^2},\\
&\ 4g^2=2(u_0v_0-uv-1)=s-4,\\
&\ s=2(u_0v_0-uv-1),
\emas
and $d\omega$ is surface measure on the unit sphere $\S^2$. The scattering angle $\theta$ is defined by
$$
\cos\theta=\frac{(U-V)(U'-V')}{(U-V)(U-V)},
$$
with
\bgrs
U=(u_0,u_1,u_2,u_3),\quad V=(v_0,v_1,v_2,v_3),\\
U\cdot V=u_0v_0-\sum_{k=1}^3u_kv_k.
\egrs
Notice that the conservation law of momentum and energy are given by
\bgr
u+v=u'+v', \\
\sqrt{1+|u|^2}+\sqrt{1+|v|^2}=\sqrt{1+|u'|^2}+\sqrt{1+|v'|^2}.
\egr

The method of spectral analysis for the Relativistic Boltzamann equation is similar to that of Boltzmann equation \cite{Ellis,Ukai2,Ukai1}. Without loss of generality, a global relativistic Maxwellian takes the form
$$
M(v)=e^{-\sqrt{1+|v|^2}}.
$$
Set the perturbation $f(t,x,v)$ of $F(t,x,v)$ around $M(v)$ by
$$F=M+\sqrt{M}f.$$
Then the RB \eqref{rb1} becomes
\be \left\{\begin{aligned}\label{rb2}
 &\ \dt f+\hat{v}\cdot\nabla_xf-Lf= \Gamma(f,f),\\
 &\ f(0,x,v)=f_0(x,v),
\end{aligned}\right.
\ee
where the linearized collision operator $Lf$ and nonlinear term $\Gamma(f,f)$ are defined by
\bma
Lf&=\frac1{\sqrt M}[Q(M,\sqrt{M}f)+Q(\sqrt{M}f,M)],  \label{Lf}\\
\Gamma(f,f)&=\frac1{\sqrt M}Q(\sqrt{M}f,\sqrt{M}f).   \label{gf}
 \ema
The linearized collision operator $L$ can be written as \cite{Glassey3,Yang1}
$$
Lf=-\nu(v)f+Kf,
$$
where $\nu(v)$ is collision frequency and its expression is
$$
\nu(v)=\int_{\R^3}\int_{\S^2}v_M\sigma(g,\theta)\mu(u)d\omega du,
$$
and $K$ is a compact operator on $L^2(\R^3_v)$ and its expression is
$$
Kf=\int_{\R^3}\int_{\S^2}v_M\sigma(g,\theta)M^{\frac{1}{2}}[M^{\frac{1}{2}}(u')f(v')+M^{\frac{1}{2}}(v')f(u')-M^{\frac{1}{2}}(v)f(u)]d\omega du.
$$
The scattering kernel $\sigma$ satisfies the following condition \cite{Glassey3}
\bq\label{ker}
C_1\frac{g^{\beta+1}}{1+g}\sin^{\gamma}\theta\leq\sigma(g,\theta)\leq C_2(g^\beta+g^\delta)\sin^{\gamma}\theta,
\eq
where $C_1$ and $C_2$ are positive constants,
$0\leq\delta<\frac{1}{2}$, $0\leq\beta<2-2\delta$, and either
$\gamma\geq0$ or
$$|\gamma|\leq \min\left\{2-\beta,\frac{1}{2}-\theta,\frac{1}{3}(2-2\theta-\beta)\right\},$$

In fact, $L$ is a non-positive and self-adjoint  operator and its null space, denoted by $N_0$, is 5 dimensional subspace given by
\bq
N_0={\rm span}\{\sqrt{M},\, v_j \sqrt{M},\,\, 1\leq j \leq 3,\, v_0 \sqrt{M}\}.
\eq
We normalized the elements of $N_0$ as
\bq\label{orthonormal basis}
\psi_0 = p_0^{-\frac12}\sqrt{M},\quad \psi_1 = p_1^{-\frac12}v_j\sqrt{M},\quad \psi_4 = p_3^{-\frac12}(v_0 - p_2)\sqrt{M}.
\eq
Here the normalized constants are given by
$$
p_0=\intr M(v)dv, \quad p_1=\intr v^2_jM(v)dv, \quad p_2=\intr v_0M(v)dv, \quad p_3= \intr v_0^2Mdv-p_2^2 .
$$

Set $L^2(\R^3)$ be a Hilbert space of complex-value functions $f(v)$ on $\R^3$ with the inner product and the norm
$$
(f,g)=\int_{\R^3}f(v)\overline{g(v)}dv,\quad \|f\|=\(\int_{\R_3}|f(v)|^2dv\)^{1/2}.
$$

Define $P_0$ be the projection operator from $L^2(\R_v^3)$ to the null $N_0$ and $P_1=I-P_0$.
Corresponding to the linearized operator $L$, it is shown in \cite{Glassey3}  that
there exists a constant $\mu> 0$ such that
\be (Lf,f)\le -\mu( P_1f,P_1f), \quad \forall f\in D(L),\ee
where $D(L)$ is the domain of $L$ given by
$$
 D(L)=\left\{f\in L^2(\R^3)\,|\,\nu(v)f\in L^2(\R^3)\right\}.
$$

Note that the solution $f$ can be decomposed into the macroscopical part and microscopical part as
\be\label{macro micro}\left\{\bal
f=P_0f+P_1f,\\
P_0f=n\psi_0+\sum\limits_{j=1}^3m_j\psi_j+q\psi_4,
\ea\right.\ee
where the density $n$, the momentum $m_j=(m_1, m_2, m_3)$ and the energy $q$ are defined by
\bq
(f,\psi_0)=n,\quad (f,\psi_j)=m_j,\quad (f,\psi_4)=q.
\eq

To obtain the optimal decay rate of global solution to \eqref{rb2}, we need to consider Cauchy problem for the linear relativistic Boltzmann equation
\be\label{lrb}\left\{\bal
\partial_tf=Bf,\quad t>0,\\
f(0,x,v)=f_0(x,v),\quad (x,v)\in \R_x^3\times \R_v^3,
\ea\right.\ee
where the operator $B$ is defined by
\be Bf=Lf-\hat{v}\cdot\Tdx f. \ee

We take the Fourier transform in \eqref{lrb} with respect to $x$ to get
\be\label{lrb1}\left\{\bal
\partial_t\hat{f} =\hat{B}(k)\hat{f},\quad t>0,\\
\hat{f}(0,k,v)=\hat{f}_0(k,v),  \quad (k,v)\in\R^3_{k}\times\R_v^3,
\ea\right.\ee
where  the operator $\hat{B}(k)$ is defined by
\be\label{B}
 \hat{B}(k)f=(L-ik\cdot\hat{v})f .
\ee

\noindent\textbf{Notations:} \ \
Before state the main results in this paper, we list some notations.
Define the Fourier transform of $f=f(x,v)$ by
$$\hat{f}(k,v)=\mathcal{F}f(k,v)=\frac1{(2\pi)^{3/2}}\intr f(x,v)e^{- i x\cdot k}dx,$$
where and throughout this paper we denote $i=\sqrt{-1}$.

Let us introduce a Sobolev space of function $f=f(x,v)$ by $H^N=L^2(\R_v^3;H^N(\R_x^3))$ with the norm
$$\|f\|_{H^N} =\(\intr\intr(1+|k|^2)^N|\hat{f}(k,v)|^2dkdv\)^{1/2}.$$
For $l\in\R$, we define a weight function by
$$w_l(v)=(\sqrt{1+|v|^2})^l,$$
and the Sobolev spaces $H_{N,l}$ as
$$H_{N,l}=\{f\in L^2(\R^3_{x}\times \R^3_{v})\mid\|f\|_{H_{N,l}}<\infty\},$$
with the norm
$$\|f\|_{H_{N,l}}=\sum\limits_{|\alpha|\le N}\|w_l(v)\da_x f\|_{L^2(\R^3_{x}\times \R^3_{v})}.$$
We also define a space of function $f=f(x,v)$ by $Z_q=L^2(\R_v^3;L^q(\R_x^3))$ with the norm
$$\|f\|_{Z_q}=\(\intr\(\intr|f(x,v)|^qdx\)^{2/q}dv\)^{1/2}.$$
Throughout this paper, denote
$\dx^\alpha=\partial_{x_1}^{\alpha_1}\partial_{x_2}^{\alpha_2}\partial_{x_3}^{\alpha_3}$ and $k^\alpha=k_1^{\alpha_1}k_2^{\alpha_2}k_3^{\alpha_3}$ with
$k=(k_1, k_2, k_3)$ and $\alpha=(\alpha_1,\alpha_2, \alpha_3)$. We denote by $\|\cdot\|_{L_{x,v}^2}$ and $\|\cdot\|_{\lkvtwo}$ the norms of the function spaces $L^2(\R_x^3\times\R_v^3)$ and $L^2(\R_k^3\times\R_v^3)$, respectively, and denote by $\|\cdot\|_{L^2_x}$, $\|\cdot\|_{L^2_v}$ and $\|\cdot\|_{L^2_k}$ the norms of the function spaces
$L^2(\R_x^3),\, L^2(\R_v^3)$ and $L^2(\R_k^3)$, respectively.

The main results can be states as follows. First, we have the spectrum set of the linear operator $\hat{B}(k)$ as follows.

\begin{thm}\label{global spectrum}
(1) For any $\tau_1>0$, there exists $d=d(\tau_1)>0$ so that for $|k|>\tau_1$,
$$
\sigma(\hat{B}(k))\subset\{\lambda\mid \mathrm{Re}\lambda<-d \}.
$$
(2) There exists a small constant $\tau_0>0$  so that for  $|k|\le\tau_0$,
$$
\sigma(\hat{B}(k))\cap\{\lambda\in\C \mid \mathrm{Re}\lambda>-\mu/2\}=\{\lambda_j(|k|)\}_{j=-1}^3,
$$
where $\lambda_j(|k|)$ is $C^\infty$ function of $|k|$. In particular, the eigenvalues $ \lambda_j(|k|) $ admit the following asymptotic expansion for $|k|\le\tau_0$,
\be\left\{\bln
&\lambda_{\pm1}(|k|)=\pm i\sqrt{a^2+b^2}|k|+A_{\pm1}|k|^2+O(|k|^3),\\
&\lambda_0(|k|)=A_{0}|k|^2+O(|k|^3),\\
&\lambda_2(|k|)=\lambda_3(|k|)=A_{2}|k|^2+O(|k|^3),
\eln\right.\ee
where $a,b>0$ and $A_j>0$, $j=-1,0,1,2$ are  constants given by \eqref{a}.
\end{thm}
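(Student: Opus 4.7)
The strategy follows the classical spectral-analysis scheme of Ellis--Pinsky and Ukai adapted to the relativistic setting, exploiting two structural features: the decomposition $L = -\nu(v) + K$ with $K$ compact on $L^2_v$ and $\nu(v) \ge \nu_0 > 0$, and the fact that $\hat{B}(0) = L$ has an isolated $5$-fold eigenvalue $0$ with eigenspace $N_0$. A convenient feature of the relativistic framework is that the streaming velocity $\hat{v} = v/v_0$ is uniformly bounded by $1$, so $-ik\cdot\hat{v}$ acts as a bounded perturbation of $L$ in the sense of Kato, depending analytically on $k$.

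For part (1), I would fix $\tau_1>0$ and search for $d\in(0,\nu_0)$ such that $\lambda - \hat{B}(k)$ is invertible whenever $|k|\ge\tau_1$ and $\mathrm{Re}\lambda\ge -d$. Writing
$$\lambda - \hat{B}(k) = \bigl(\lambda + \nu(v) + ik\cdot\hat{v}\bigr)\Bigl(I - \bigl(\lambda + \nu(v) + ik\cdot\hat{v}\bigr)^{-1}K\Bigr),$$
the first factor is invertible for $\mathrm{Re}\lambda > -\nu_0$ and the operator $T(\lambda,k) := (\lambda+\nu+ik\cdot\hat{v})^{-1}K$ is compact on $L^2_v$, so $\lambda \in \sigma(\hat{B}(k))$ can occur only through $1 \in \sigma(T(\lambda,k))$. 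To exclude such eigenvalues when $|k|\ge\tau_1$, I would take an eigenfunction $f$ with $\|f\|=1$, extract from the coercivity $(Lf,f)\le -\mu\|P_1f\|^2$ that $\mathrm{Re}\lambda \le -\mu\|P_1 f\|^2$, and then control the macroscopic part $P_0 f$ by testing the equation against the basis \eqref{orthonormal basis} and exploiting the streaming term: when $|k|\ge\tau_1$, the vector $ik\cdot\hat{v} P_0 f$ has a quantitatively non-degenerate component in $N_0^\perp$. A contradiction argument along a sequence $|k_n|\ge\tau_1$, $\lambda_n \to \lambda_\infty$ with $\mathrm{Re}\lambda_\infty \ge -d$, using compactness of $K$ and boundedness of $\hat{v}$, then yields the uniform gap $d=d(\tau_1)>0$.

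For part (2), since $0$ is isolated in $\sigma(L)$ with the rest of the spectrum in $\{\mathrm{Re}\lambda \le -\mu\}$, I would fix a small contour $\Gamma$ in $\{\mathrm{Re}\lambda > -\mu/2\}$ around $0$ and define the total eigenprojection
$$P(k) = \frac{1}{2\pi i}\oint_\Gamma \bigl(\lambda - \hat{B}(k)\bigr)^{-1}\,d\lambda,$$
which for $|k|\le\tau_0$ depends analytically on $k$ and has constant rank $5$. The five eigenvalues inside $\Gamma$ are the five $C^\infty$ branches $\lambda_j(|k|)$. Using rotational invariance one reduces to $k=|k|e_1$, and the leading behaviour is governed by the $5\times 5$ matrix $B_1 = P_0(-i\hat{v}_1)P_0$ written in the basis \eqref{orthonormal basis}. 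By the odd/even parity of $\hat{v}_1$ against the basis, $B_1$ is block-diagonal: an acoustic $3\times 3$ block on $\mathrm{span}\{\psi_0,\psi_1,\psi_4\}$ with eigenvalues $0, \pm i\sqrt{a^2+b^2}$ (where $a, b$ come from the explicit relativistic Maxwellian moments), and a $2\times 2$ zero block on $\mathrm{span}\{\psi_2,\psi_3\}$ corresponding to transverse-momentum modes.

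The second-order coefficients $A_j$ arise from the next Kato correction
$$A_j = \bigl( (-i\hat{v}_1)\,(L|_{N_0^\perp})^{-1}\,(-i\hat{v}_1)\, \phi_j,\ \phi_j \bigr),$$
where $\phi_j$ are the normalized leading-order eigenvectors obtained by diagonalizing $B_1$ within each degenerate subspace. The main technical hurdle will be verifying the strict positivity $A_j>0$ for every $j$: this requires showing that $(-i\hat{v}_1)\phi_j$ has a nonzero component in $N_0^\perp$, so that the strict negativity of $(L|_{N_0^\perp})^{-1}$ yields dissipation, and the verification relies on explicit parity calculations and moment identities involving $p_0,\ldots,p_3$ and the scalars $a,b$ of the relativistic Maxwellian. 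Once these positivities are established, the $C^\infty$ dependence on $|k|$ and the $O(|k|^3)$ remainder follow from analyticity of Kato's perturbation series after the $k=0$ eigenspace splittings are resolved.
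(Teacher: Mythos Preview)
For Part~(2) your Kato--Riesz projection scheme is correct and essentially interchangeable with what the paper does, though the paper implements it by Lyapunov--Schmidt reduction rather than a contour integral: write $e=P_0 e+P_1 e$, solve $P_1 e = i|k|(L-\lambda-i|k|P_1(\hat v\cdot\omega))^{-1}P_1(\hat v\cdot\omega)P_0 e$, substitute back to obtain a $5\times5$ nonlinear eigenvalue problem on $N_0$, diagonalize its leading part $P_0(\hat v\cdot\omega)P_0$ (your matrix $B_1$), and apply the implicit function theorem to the determinants arising from the acoustic $3\times3$ and shear $2\times2$ blocks. Both routes produce the same first-order eigenvectors $E_j$ and the same second-order formula $A_j=-(L^{-1}P_1(\hat v\cdot\omega)E_j,(\hat v\cdot\omega)E_j)$, so this half of your proposal is fine.

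For Part~(1), however, there is a gap in your handling of large $|k|$. The paper does not argue via macroscopic control; it proves the quantitative bound
\[
\sup_{\mathrm{Re}\lambda\ge -c_0+\delta}\bigl\|K(\lambda-\hat A(k))^{-1}\bigr\|\le C\delta^{-1+\theta}(1+|k|)^{-\theta},\qquad \theta=\tfrac{\eta}{3+2\eta}>0,
\]
together with a companion decay in $|\mathrm{Im}\lambda|$ for bounded $|k|$, by splitting the $u$-integration at a radius $R$ and invoking the kernel estimate $\int|K(u,v)|(1+|u|^2)^{-\alpha/2}du\le C(1+|v|^2)^{-(\alpha+\eta)/2}$. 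Once this norm is below $1/2$, the factorization $\lambda-\hat B(k)=(I-K(\lambda-\hat A(k))^{-1})(\lambda-\hat A(k))$ is invertible for all large $|k|$ and all large $|\mathrm{Im}\lambda|$ simultaneously, and the compactness/contradiction step is then confined to a \emph{compact} box in $(k,\mathrm{Im}\lambda)$. Your route---``$\mathrm{Re}\lambda\le -\mu\|P_1 f\|^2$ forces $P_1 f$ small, then the streaming term forces $P_1 f$ large''---does not close when $|k|\to\infty$: projecting the eigenvalue equation onto $N_0^\perp$ introduces the terms $(L-\lambda)P_1 f$ and $i|k|P_1(\hat v\cdot\omega)P_1 f$, whose norms scale like $|k|$ (recall $\nu$ is unbounded and $|\mathrm{Im}\lambda|$ may grow with $|k|$), so the inequality you would extract degenerates. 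Your contradiction sequence also presupposes $\lambda_n\to\lambda_\infty$, which is not guaranteed without first bounding $|\mathrm{Im}\lambda|$; that bound is exactly what the quantitative lemma supplies, and the same lemma is reused later to control $(I-K(\lambda-\hat A(k))^{-1})^{-1}$ uniformly in the semigroup decomposition.
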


The, we have the time-asymptotical decay rates of global solution to the nonlinear Boltzmann equation \eqref{rb2} as follows.
\begin{thm}\label{nonlinear upper}
Assume that $f_0\in H_{N,1}\cap Z_1$ for $N\ge4$,  and $\|f_0\|_{ H_{N,1}\cap Z_1}\le\delta_0$ for a constant $\delta_0>0$ small enough. Then, there exists a globally unique solution $f=f(t,x,v)$ to the relativistic Boltzmann equation \eqref{rb2}, which satisfies
\bma
&\ \|\dxa(f(t),\psi_j)\|_{\lxtwo}\le C\delta(1+t)^{-\frac{3}{4}-\frac{|\alpha|}{2}},\quad j=0,1,2,3,4,\label{nonlinear upper1}\\
&\ \|\dxa P_1f(t)\|_{\lxvtwo}\le C\delta(1+t)^{-\frac{5}{4}-\frac{|\alpha|}{2}},\label{nonlinear upper2}\\
&\ \|P_1f\|_{ H_{N,1}}+\|\nabla_xP_0f\|_{H^{N-1}}\le C\delta(1+t)^{-\frac{5}{4}},\label{nonlinear upper3}
\ema
for $|\alpha|=0,1$. If  it further holds that $P_0f_0=0$, then
\bma
&\ \|\dxa(f(t),\psi_j)\|_{\lxtwo}\le C\delta(1+t)^{-\frac{5}{4}-\frac{|\alpha|}{2}},\quad j=0,1,2,3,4,\label{nonlinear upper4}\\
&\ \|\dxa P_1f(t)\|_{\lxvtwo}\le C\delta(1+t)^{-\frac{7}{4}-\frac{|\alpha|}{2}},\label{nonlinear upper5}\\
&\ \|P_1f\|_{ H_{N,1}}+\|\nabla_xP_0f\|_{H^{N-1}}\le C\delta(1+t)^{-\frac{7}{4}},\label{nonlinear upper6}
\ema  for $|\alpha|=0,1$.
\end{thm}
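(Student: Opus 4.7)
The plan is to derive linear semigroup decay estimates from the spectrum structure of Theorem \ref{global spectrum}, combine them with macro-micro energy estimates in $H_{N,1}$, and close a bootstrap argument on the Duhamel representation
\[
f(t)=e^{tB}f_0+\int_0^t e^{(t-s)B}\Gamma(f(s),f(s))\,ds.
\]
Local existence for small data is standard, so the real content is propagating the smallness and extracting the sharp decay rates.

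For the linear decay, I would take the Fourier transform in $x$, split $\R_k^3$ into the low-frequency ball $|k|\le\tau_0$, an intermediate annulus, and the high-frequency region $|k|\ge\tau_1$, and use part (1) of Theorem \ref{global spectrum} to dispose of the last regime by uniform exponential decay, and part (2) together with a contour integral to isolate the five branches $\lambda_j(|k|)$ in the first. Since $\mathrm{Re}\,\lambda_j(|k|)\lesssim-c|k|^2$ for $|k|\le\tau_0$, combining the Hausdorff--Young-type bound $\|\hat f_0\|_{L^\infty_k L^2_v}\lesssim\|f_0\|_{Z_1}$ with the elementary estimate $\int_{\R^3}|k|^{2(|\alpha|+\ell)}e^{-c|k|^2 t}dk\lesssim(1+t)^{-\frac32-|\alpha|-\ell}$ yields
\[
\|\dxa e^{tB}f_0\|_{\lxvtwo}\lesssim(1+t)^{-\frac34-\frac{|\alpha|+\ell}{2}}\|f_0\|_{Z_1}+e^{-ct}\|\dxa f_0\|_{\lxvtwo},
\]
with $\ell=0$ in general, and $\ell=1$ for $P_1 e^{tB}f_0$ or whenever $P_0 f_0=0$, because the low-$|k|$ eigenprojections vanish to first order in $|k|$ on $N_0^\perp$. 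The crucial observation is that $P_0\Gamma(f,f)=0$, so the Duhamel integrand always benefits from the improved $\ell=1$ rate, regardless of the sign of $P_0 f_0$.

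For the high-order bounds I would establish a weighted macro-micro energy inequality. Using the coercivity $(Lf,f)\le-\mu\|P_1 f\|^2$, Kawashima-type fluid dissipation for the macroscopic components $(n,m_j,q)$ from \eqref{macro micro}, the weighted dissipation from $\nu(v)$, and a bilinear bound of the form $\|w_1\Gamma(f,g)\|_{L^2_v}\lesssim\|\nu^{1/2}w_1 f\|_{L^2_v}\|\nu^{1/2}w_1 g\|_{L^2_v}$ together with the Sobolev embedding $H^2(\R_x^3)\hookrightarrow L^\infty(\R_x^3)$ (where $N\ge 4$ enters), one obtains
\[
\tfrac{d}{dt}\mathcal E_{N,1}(t)+c\bigl(\|\nabla_x P_0 f\|_{H^{N-1}}^2+\|P_1 f\|_{H_{N,1}}^2\bigr)\le C\|f\|_{H_{N,1}}\,\mathcal D_{N,1}(t),
\]
with $\mathcal E_{N,1}\sim\|f\|_{H_{N,1}}^2$; under smallness this absorbs the nonlinear right-hand side and is the workhorse for the global $L^\infty_tH_{N,1}$ bound and its decay.

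To close everything, I would define the time-weighted functional
\[
\mathcal M(t)=\sup_{0\le s\le t}\Bigl\{\sum_{|\alpha|\le1}(1+s)^{\frac34+\frac{|\alpha|}{2}}\|\dxa P_0 f\|_{\lxvtwo}+\sum_{|\alpha|\le1}(1+s)^{\frac54+\frac{|\alpha|}{2}}\|\dxa P_1 f\|_{\lxvtwo}+(1+s)^{\frac54}\bigl(\|P_1 f\|_{H_{N,1}}+\|\nabla_x P_0 f\|_{H^{N-1}}\bigr)\Bigr\},
\]
insert the linear decay bound into the Duhamel formula, and use the convolution identity $\int_0^t(1+t-s)^{-a}(1+s)^{-b}ds\lesssim(1+t)^{-\min(a,b)}$ for $a+b>1$ to obtain $\mathcal M(t)\le C\delta_0+C\mathcal M(t)^2$; continuity then gives $\mathcal M(t)\le C\delta_0$, which is \eqref{nonlinear upper1}--\eqref{nonlinear upper3}. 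The improved bounds \eqref{nonlinear upper4}--\eqref{nonlinear upper6} under $P_0 f_0=0$ follow by rerunning the same scheme with $\ell=1$ also on the homogeneous piece. The main obstacle will be closing the top-order bound $\|P_1 f\|_{H_{N,1}}\lesssim\delta_0(1+t)^{-5/4}$: the energy inequality only produces this rate once the sharp macroscopic decay $\|\nabla_x P_0 f\|_{\lxvtwo}\lesssim\delta_0(1+t)^{-5/4}$ is already known from the Duhamel representation, so the low-norm (Duhamel) and high-norm (energy) estimates must be iterated carefully, without losing a spatial derivative on $\Gamma(f,f)$ and while preserving the velocity weight $w_1$.
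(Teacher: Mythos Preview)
Your proposal is correct and follows essentially the same route as the paper: Duhamel representation, linear semigroup decay from the spectral expansion (with the extra factor of $|k|$ on $N_0^\perp$ giving $\ell=1$), the observation $P_0\Gamma(f,f)=0$, a macro--micro energy inequality borrowed from the hypocoercivity literature, and a bootstrap on a time-weighted functional identical to the paper's $Q_1$, $Q_2$. The one point to make precise is the energy step you flag as ``the main obstacle'': the paper does not use your full-energy inequality for the decay, but rather a \emph{reduced} functional $H_1(f)\sim\sum_{|\alpha|\le N}\|\nu\partial^\alpha P_1 f\|^2+\sum_{|\alpha|\le N-1}\|\partial^\alpha\nabla_x P_0 f\|^2$ (no undifferentiated $P_0 f$) satisfying $\frac{d}{dt}H_1+\mu D_1\le C\|\nabla_x P_0 f\|_{L^2_{x,v}}^2$, so that Gronwall plus the Duhamel bound on $\|\nabla_x P_0 f\|_{L^2_{x,v}}$ immediately yields $H_1(f(t))\lesssim(1+t)^{-5/2}(\delta_0+Q_1(t)^2)^2$; your sketch is consistent with this once you drop the zero-order macroscopic piece from $\mathcal E_{N,1}$.
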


We can also prove that the above time-decay rates are indeed optimal in following sense.
\begin{thm}\label{nonlinear lower}
Let the assumptions of Theorem \ref{nonlinear upper} hold. Assume further that there exist two constants $d_0,d_1>0$ and a small constant $\tau_0>0$ such that $\inf_{|k|\le \tau_0}|(\hat{f}_0,\psi_0)|\ge d_0$, $ \sup_{|k|\le \tau_0}|(\hat{f}_0,b\psi_4-a\psi_0)|=0$  and $\sup_{|k|\le \tau_0}|(\hat{f}_0,\psi')|=0$  with $\psi'=(\psi_1,\psi_2.\psi_3)$. Then, the global solution $f$ to the relativistic Boltzmann equation \eqref{rb2} satisfies
\bma
&\ C_1\delta_0(1+t)^{-\frac{3}{4}}\le\|(f(t),\psi_j)\|_{\lxtwo}\le C_2\delta_0(1+t)^{-\frac{3}{4}},\quad j=0, 4,\label{nonlinear lower1}\\
&\ C_1\delta_0(1+t)^{-\frac{3}{4}}\le\|(f(t),\psi')\|_{\lxtwo}\le C_2\delta_0(1+t)^{-\frac{3}{4}}, \label{nonlinear lower1a}\\
&\ C_1\delta_0(1+t)^{-\frac{5}{4}}\le\|P_1f(t)\|_{\lxvtwo}\le C_2\delta_0(1+t)^{-\frac{5}{4}},\label{nonlinear lower2} \\
&\ C_1\delta_0(1+t)^{-\frac{3}{4}}\le\|f(t)\|_{H_{N,1}}\le C_2\delta_0(1+t)^{-\frac{3}{4}},\label{nonlinear lower3}
\ema
for $t>0$ large with constants $C_2>C_1>0$.
If it holds that $P_0f_0=0$, $\inf_{|k|\le \tau_0}|(\hat{f}_0,L^{-1}P_1(\hat{v}\cdot\omega)(\omega\cdot\psi'))|\ge d_0$ and $\sup_{|k|\le \tau_0}|(\hat{f}_0,L^{-1}P_1(\hat{v}\cdot\omega)\psi_j)|=0$ with $j=0,4$ and $\omega=k/|k|$, then
\bma
&\ C_1\delta_0(1+t)^{-\frac{5}{4}}\le\|(f(t),\psi_j)\|_{\lxtwo}\le C_2\delta_0(1+t)^{-\frac{5}{4}},\quad j=0, 4,\label{nonlinear lower4}\\
&\ C_1\delta_0(1+t)^{-\frac{5}{4}}\le\|(f(t),\psi')\|_{\lxtwo}\le C_2\delta_0(1+t)^{-\frac{5}{4}}, \label{nonlinear lower4a}\\
&\ C_1\delta_0(1+t)^{-\frac{7}{4}}\le\|P_1f(t)\|_{\lxvtwo}\le C_2\delta_0(1+t)^{-\frac{7}{4}},\label{nonlinear lower5} \\
&\ C_1\delta_0(1+t)^{-\frac{5}{4}}\le\|f(t)\|_{H_{N,1}}\le C_2\delta_0(1+t)^{-\frac{5}{4}},\label{nonlinear lower6}
\ema
for $t>0$ large.
\end{thm}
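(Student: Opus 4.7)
The approach is to combine the low-frequency spectral decomposition from Theorem \ref{global spectrum} with the Duhamel representation
$$f(t)=e^{tB}f_0+\int_0^t e^{(t-s)B}\Gamma(f,f)(s)\,ds,$$
so that the lower bound comes entirely from the linear evolution applied to $f_0$, while the nonlinear contribution is controlled, via the upper bounds of Theorem \ref{nonlinear upper}, to decay strictly faster than the target rate. By Plancherel, $\|(f(t),\psi_j)\|_{L^2_x}^2=\int_{\R^3_k}|(\hat{f}(t,k),\psi_j)|^2\,dk$, and I would split the $k$-integral into $|k|\le\tau_0$ (using Theorem \ref{global spectrum}(2)) and $|k|>\tau_0$ (where Theorem \ref{global spectrum}(1) gives uniform exponential decay). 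On the low-frequency piece, the semigroup admits the decomposition
$$e^{t\hat{B}(k)}\hat{f}_0=\sum_{j=-1}^{3}e^{t\lambda_j(|k|)}\Pi_j(k)\hat{f}_0+e^{tB^{R}(k)}\hat{f}_0,$$
with $\Pi_j(k)$ the Riesz projections onto the eigenspaces for $\lambda_j(|k|)$ and a remainder whose spectrum lies to the left of $-\mu/2$.

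\textbf{Step 1 (linear lower bound, general case).} Using the expansions $\lambda_{\pm1}(|k|)=\pm i\sqrt{a^2+b^2}|k|+A_{\pm1}|k|^2+O(|k|^3)$ and $\lambda_{0,2,3}(|k|)=A_j|k|^2+O(|k|^3)$ with $A_j>0$, each mode contributes a Gaussian factor $e^{-A_j|k|^2 t}$. Expanding the eigenprojections at $k=0$, one sees that $\Pi_j(0)$ is expressed in terms of the basis $\{\psi_0,\psi',\psi_4\}$ and the acoustic modes mix $\psi_0$ and $\psi_4$ through the coefficients $a,b$. The conditions $\inf_{|k|\le\tau_0}|(\hat{f}_0,\psi_0)|\ge d_0$ together with $\sup_{|k|\le\tau_0}|(\hat{f}_0,b\psi_4-a\psi_0)|=0$ and $\sup_{|k|\le\tau_0}|(\hat{f}_0,\psi_j)|=0$ ($j=1,2,3$) select a spectral component whose amplitude is bounded below by a positive constant, and the elementary Gaussian integral
$$\int_{|k|\le\tau_0}e^{-2A|k|^2 t}\,dk\ge c(1+t)^{-3/2}$$
then yields $(1+t)^{-3/4}$ for the macroscopic quantities $(f,\psi_j)$ and, after the extra $|k|$-factor carried by $P_1 e^{t\hat{B}(k)}\hat{f}_0$, the rate $(1+t)^{-5/4}$ for $\|P_1 f\|_{L^2_{x,v}}$.

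\textbf{Step 2 (nonlinear correction).} Using the upper-bound estimates $\|\Gamma(f,f)(s)\|_{L^2_{x,v}\cap Z_1}\le C\delta_0^2(1+s)^{-3/2}$ (which follow from Theorem \ref{nonlinear upper} together with bilinear estimates and $P_0\Gamma(f,f)=0$), combined with the linear semigroup decay for the inhomogeneous problem on $Z_1\cap L^2$ data, I would prove
$$\Big\|\int_0^t e^{(t-s)B}\Gamma(f,f)(s)\,ds\Big\|_{L^2_{x,v}}\le C\delta_0^2(1+t)^{-5/4}\log(1+t),$$
which is strictly faster than $(1+t)^{-3/4}$. Therefore, for $t$ sufficiently large, the linear lower bound transfers to $f$, and matching with Theorem \ref{nonlinear upper} gives \eqref{nonlinear lower1}--\eqref{nonlinear lower3}.

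\textbf{Step 3 (case $P_0 f_0=0$).} Since $\Pi_j(0)\hat{f}_0=0$ when $P_0\hat{f}_0=0$, the leading term in the linear expansion vanishes and one must extract the next order. Writing $\hat{v}\cdot\omega\,\hat{f}_0=L(L^{-1}P_1(\hat{v}\cdot\omega)\hat{f}_0)+P_0(\hat{v}\cdot\omega)\hat{f}_0$ and iterating the Duhamel formula for $\hat{B}(k)=L-ik\cdot\hat{v}$ once, the low-frequency part of $e^{t\hat{B}(k)}\hat{f}_0$ acquires a factor $|k|$ and takes the form
$$\sum_{j=-1}^{3} i|k|\,e^{t\lambda_j(|k|)}(\hat{f}_0,L^{-1}P_1(\hat{v}\cdot\omega)\psi_j)\psi_j+O(|k|^2).$$
The hypotheses $\inf_{|k|\le\tau_0}|(\hat{f}_0,L^{-1}P_1(\hat{v}\cdot\omega)(\omega\cdot\psi'))|\ge d_0$ and $\sup_{|k|\le\tau_0}|(\hat{f}_0,L^{-1}P_1(\hat{v}\cdot\omega)\psi_{0,4})|=0$ again isolate a single non-vanishing eigenmode, and the Gaussian integral $\int_{|k|\le\tau_0}|k|^2 e^{-2A|k|^2 t}\,dk\ge c(1+t)^{-5/2}$ produces the improved $(1+t)^{-5/4}$ lower bound for macroscopic moments and $(1+t)^{-7/4}$ for $P_1 f$. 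The nonlinear remainder is handled as in Step 2, but now using the sharper upper bounds \eqref{nonlinear upper4}--\eqref{nonlinear upper6}.

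\textbf{Main obstacle.} The delicate point is Step 2: one must control the Duhamel nonlinear integral in a topology strong enough to beat the linear lower bound uniformly for large $t$. This requires a careful dispersive-type estimate for $e^{tB}$ on $Z_1\cap L^2$ derived from the spectral projection formulas of Theorem \ref{global spectrum}, together with the structural identity $P_0\Gamma(f,f)=0$ that provides one additional $|k|$-factor in the low-frequency regime and makes the bilinear term microscopic, which is exactly what is needed to push its decay below the linear floor.
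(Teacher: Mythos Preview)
Your overall strategy coincides with the paper's: use Duhamel, invoke the linear lower bound coming from the low-frequency spectral decomposition, and show the nonlinear Duhamel term decays strictly faster using Theorem~\ref{nonlinear upper} together with $P_0\Gamma(f,f)=0$. The paper packages the linear lower bound as a separate result (Theorem~\ref{Decay estimate lower limit}) and then the proof of Theorem~\ref{nonlinear lower} is a three-line subtraction argument, exactly as you outline.

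There is, however, a genuine gap in your Step~1 (and analogously Step~3). The hypotheses $(\hat f_0,b\psi_4-a\psi_0)=0$ and $(\hat f_0,\psi')=0$ kill the $\lambda_0$ and $\lambda_{2,3}$ contributions to $(G_1(t,k)\hat f_0,\psi_0)$, leaving only the \emph{acoustic pair} $\lambda_{\pm1}$. Since $\lambda_{-1}=\overline{\lambda_1}$, the leading term is
\[
\frac{b}{a^2+b^2}\,(b\hat n_0+a\hat q_0)\,e^{\mathrm{Re}\,\lambda_1(|k|)t}\cos\big(\mathrm{Im}\,\lambda_1(|k|)t\big)+O(|k|),
\]
so the amplitude is \emph{not} bounded below by a positive constant pointwise in $k$: the factor $\cos(\mathrm{Im}\,\lambda_1 t)=\cos(\sqrt{a^2+b^2}\,|k|t+O(|k|^3t))$ oscillates and vanishes on concentric spheres. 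Your ``elementary Gaussian integral'' therefore does not apply as written. What is actually needed (and what the paper carries out in the proof of Theorem~\ref{Decay estimate lower limit}) is the oscillatory lower bound
\[
\int_{|k|\le\tau_0} e^{-2\xi|k|^2 t}\cos^2\big(\sqrt{a^2+b^2}\,|k|t\big)\,dk \ \ge\ C(1+t)^{-3/2},
\]
obtained after the rescaling $k\mapsto k/\sqrt t$ by exploiting that $\cos^2$ has average $1/2$ over any half-period. The same issue arises for $(\,\cdot\,,\psi')$, where the relevant factor is $\sin^2(\mathrm{Im}\,\lambda_1 t)$, and again in Step~3 at the next order in $|k|$. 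Without this oscillatory-integral step the lower bounds do not follow.

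A minor remark on Step~2: the paper obtains $C\delta_0^2(1+t)^{-5/4}$ with no logarithm, since both exponents in $\int_0^t(1+t-s)^{-5/4}(1+s)^{-3/2}\,ds$ exceed $1$. Your weaker bound with $\log(1+t)$ is still sufficient for the conclusion, but the sharper estimate is available.
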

There have been many important achievements on the existence and long time behaviors of solutions to the classical Boltzmann equation. In particular, in \cite{DiPerna} it has been shown that the global existence of renormalized weak solution subject to general large initial data. The global existence and optimal decay rate $(1+t)^{-\frac{3}{4}}$ of strong solution near Maxwellian for hard potential was proved in \cite{Guo1,Liu2,Ukai2,Ukai1,Ukai3,Zhong}. The global existence of solutions near vacuum was investigated in \cite{Bellomo,Glassey4,Illner}.
The spectrum structure of classical Boltzmann equation has been developed vigorously. The spectrum of the Boltzmann equation with hard sphere and hard potentials was constructed in \cite{Ellis,Ukai2,Ukai1,Ukai3}, and the optimal time decay rate based on spectral analysis  has been established in the torus \cite{Ukai2}, and in $\R^n$ \cite{Ukai1,Ukai3}. The pointwise behavior of the Green function of the Boltzmann equation was verified in \cite{Liu1,Liu3} for hard sphere, and in \cite{Lee,Lin} for hard potentials and soft potentials.

There have been a lot of works on the relativistic Boltzmann equation, see \cite{Cer1,Dudy,Dudy2,Glassey1,Glassey2,Glassey3,Hsiao} and the references therein. The background of relativistic Boltzmann equations is mentioned in \cite{Cer1}. The existence and uniqueness of the solution to the linearized relativistic Boltzmann equation  has been proved  in \cite{Dudy}. The global solution of relativistic Boltzmann equation near a relativistic Maxwellian  is obtained in \cite{Glassey2,Glassey3}  for hard potentials, and in \cite{Duan1,Strain}  for soft potentials. Yang and Yu established in \cite{Yang1} that the global solutions to the relativistic Boltzmann and Landau equations tend to the equilibriums at $(1+t)^{-\frac{3}{4}}$ in $L^2$-norm by using compensating function and the energy method. 

The organization of this paper is as follows. In section 2, we study  the spectrum and resolvent sets of the linearized relativistic Boltzmann equation and show asymptotic expansions of eigenvalues and eigenfunctions of the linearized operator $\hat{B}(k)$ at low frequency based on the approach in \cite{Ukai1}. In section 3, we study the semigroup $e^{t\hat{B}(k)}$ generated by the linear operator $\hat{B}(k)$, and we establish the optimal time decay rates of the global solution to the linearized relativistic Boltzmann equation in terms of the $e^{t\hat{B}(k)}$ by drawing on the idea of \cite{Zhong}. In section 4, based on the asymptotic behaviors of linearized problem, we establish the optimal time decay rates of the original nonlinear relativistic Boltzmann equation.


\section{Spectral analysis}
\label{sect2}
\setcounter{equation}{0}
In this section, we study the spectrum and resolvent sets of linear collision operator $\hat{B}(k)$ defined by \eqref{B}, which will be applied to study the optimal decay rate of solution to the linear system \eqref{lrb}.

We review some properties of the operators $\nu(v)$ and $K$.
\begin{lem}[\cite{Dudy,Glassey4}]\label{Kbounded}
Under the assumption \eqref{ker}, the following holds.
\begin{enumerate}
\item[{\rm (i)}]
There are two constant $c_0,c_1>0$ such that
\bq\label{nu}
c_0v_0^{\beta/2}\le\nu(v)\le c_1 v_0^{\beta/2},\quad v\in \R^3.
\eq

\item[{\rm (ii)}] The operator K is bounded and compact in $L^2(\R_v^3)$. Furthermore, K is an integral operator
\bq
Kf(v)=\intr K(u,v)f(u)du
\eq
with the kernel $K(u,v)$ satisfying

\begin{enumerate}
\item[{\rm (1)}] $\sup\limits_v\dint_{\R^3} |K(u,v)|du<\infty$,

\item[{\rm (2)}] $\sup\limits_v\dint_{\R^3}  |K(u,v)|^2du<\infty$,

\item[{\rm (3)}] $\dint_{\R^3}|K(u,v)|(1+|u|^2)^{-\alpha/2}du\le C(1+|v|^2)^{-\frac{1}{2}(\alpha+\eta)}$ for any $\alpha\ge0$, where
$$
\eta=1-\frac{1}{2}[3|r|+\beta+2\delta]>0.
$$
\end{enumerate}
\end{enumerate}
\end{lem}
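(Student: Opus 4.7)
The plan is to handle the two parts of the lemma separately. For (i), I would insert the two-sided bound \eqref{ker} on the scattering kernel into the definition
$$\nu(v)=\int_{\R^3}\int_{\S^2}v_M\sigma(g,\theta)M(u)\,d\omega\,du,$$
and note that the hypothesis on $\gamma$ guarantees $\int_{\S^2}\sin^{\gamma}\theta\,d\omega<\infty$, so the problem reduces to estimating
$$I(v):=\int_{\R^3}\frac{g\sqrt{1+g^2}}{u_0v_0}(g^\beta+g^\delta)M(u)\,du.$$
Since $4g^2=2(u_0v_0-u\cdot v-1)$ and $M(u)$ decays exponentially, the effective support of the integrand is $|u|\lesssim 1$; on that region, for $|v|$ large, $g\asymp v_0^{1/2}$ and $v_M\asymp 1$, yielding $I(v)\asymp v_0^{\beta/2}$. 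A compactness argument handles bounded $v$, and inserting the lower bound $\sigma\ge C_1 g^{\beta+1}/(1+g)\sin^{\gamma}\theta$ in place of the upper one produces the matching constant $c_0$.

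For (ii), I would decompose $K=K_1+K_2-K_3$ according to the three terms in the integrand of $Kf$. The loss-type piece $K_3 f(v)=M^{1/2}(v)\int_{\R^3}\bigl(\int_{\S^2}v_M\sigma M^{1/2}(v)\,d\omega\bigr)f(u)\,du$ is already of integral form in $u$. For $K_1$ and $K_2$ I would perform the relativistic Carleman-type change of variables $(u,\omega)\mapsto u'$ (or $v'$) at fixed $v$, using the conservation laws $u+v=u'+v'$ and $u_0+v_0=u'_0+v'_0$ to invert the collision. Computing the Jacobian and expressing the old variables in terms of the new one is the delicate geometric step, and it produces the kernel representation $Kf(v)=\int_{\R^3}K(u,v)f(u)\,du$.

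For the three bounds on $K(u,v)$, the estimates (1) and (2) follow from the Gaussian-type decay of the factor $M^{1/2}(u')M^{1/2}(v')=M^{1/2}(u)M^{1/2}(v)$ (by energy conservation) combined with the polynomial growth of $\sigma$ from \eqref{ker}. The weighted estimate (3) is the most delicate: one must track the polynomial weight $(1+|u|^2)^{-\alpha/2}$ through the change of variables and extract the sharp gain $(1+|v|^2)^{-(\alpha+\eta)/2}$, using the relativistic conservation law together with the power count $3|r|+\beta+2\delta$ coming from $v_M\sigma(g,\theta)$. The main technical obstacle is precisely this last step, where reaching the sharp exponent $\eta=1-\tfrac12(3|r|+\beta+2\delta)$ requires careful bookkeeping of all polynomial weights along the collision geometry; this is the content of the classical relativistic Grad-type analysis in the references attached to the lemma statement.
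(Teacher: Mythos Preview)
The paper does not prove this lemma at all: the header \texttt{[\textbackslash cite\{Dudy,Glassey4\}]} signals that the result is quoted from the literature (Dudy\'nski--Ekiel-Je\.zewska and Glassey), and no argument is supplied in the text. So there is no ``paper's own proof'' to compare your outline against.

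That said, your sketch is a reasonable summary of the strategy those references actually use: plugging the two-sided bound \eqref{ker} into the definition of $\nu$ for part (i), and for part (ii) splitting $K$ into gain and loss pieces, performing the relativistic Carleman change of variables on the gain terms, and then tracking the Maxwellian and polynomial weights through the collision geometry to obtain the kernel bounds (1)--(3). You correctly identify the weighted estimate (3) as the technically delicate step. If you were asked to supply a full proof here you would need to carry out the relativistic post-collisional change of variables explicitly (the Jacobian computation is nontrivial and quite different from the classical case), but for the purposes of this paper a citation is all that is given.
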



Let $\hat{A}(k)$ be the operator obtained by dropping $K$ from $\hat{B}(k)$, we have
\bma\label{A}
& \hat{A}(k)f=(-ik\cdot\hat{v}-\nu(v))f, \quad f\in D(\hat{A}(k)),\\
& D(\hat{A}(k))=D(\hat{B}(k))=\left\{f\in L^2(\R^3)\,|\,\nu(v)f\in L^2(\R^3)\right\}.\nnm
\ema

\begin{lem} \label{spectrumA}
The operator $\hat{A}(k)$ generates a continuous contraction semigroup on $L^2(\R^3_v)$, which satisfies
$$
 \|e^{t\hat{A}(k)}f\|\le e^{-c_0t}\|f\|,\quad t>0,\,\, f\in L^2(\R_v^3).
$$
In addition, we have $\sigma(\hat{A}(k))\subset\{\lambda\mid {\rm Re}\lambda\le-c_0\}$.
\end{lem}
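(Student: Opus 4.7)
The plan is to exploit the fact that $\hat{A}(k)$ is simply a multiplication operator in $v$, so its semigroup and resolvent can be written down explicitly.

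First I would observe that $\hat{A}(k)$ acts as multiplication by the complex-valued symbol
\[
m_k(v) = -ik\cdot\hat{v}(v) - \nu(v),
\]
on the domain $D(\hat{A}(k)) = \{f \in L^2(\R^3_v) \mid \nu(v)f \in L^2(\R^3_v)\}$. Since $\mathrm{Re}\, m_k(v) = -\nu(v)$ does not depend on $k$, and since the imaginary part is bounded by $|k|$, standard semigroup theory for multiplication operators applies and I can define the candidate semigroup pointwise by
\[
(e^{t\hat{A}(k)}f)(v) = e^{t m_k(v)}f(v) = e^{-it k\cdot\hat{v}(v)}\,e^{-t\nu(v)}f(v).
\]

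The contraction bound then comes directly from Lemma \ref{Kbounded}(i): because $v_0 = \sqrt{1+|v|^2} \ge 1$ and $\beta \ge 0$, we have $\nu(v) \ge c_0 v_0^{\beta/2} \ge c_0$ for all $v \in \R^3$. Therefore
\[
|e^{t m_k(v)}| = e^{-t\nu(v)} \le e^{-c_0 t}, \qquad t \ge 0,
\]
and squaring and integrating in $v$ gives $\|e^{t\hat{A}(k)}f\|_{L^2_v} \le e^{-c_0 t}\|f\|_{L^2_v}$. The semigroup identity $e^{(t+s)\hat{A}(k)} = e^{t\hat{A}(k)}e^{s\hat{A}(k)}$ is immediate pointwise, and strong continuity at $t = 0$ follows from dominated convergence using the pointwise bound $|e^{tm_k(v)}f(v) - f(v)| \le 2|f(v)|$. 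The generator can be checked on $D(\hat{A}(k))$ by the same dominated convergence argument, since $\frac{1}{t}(e^{tm_k(v)}-1) \to m_k(v)$ pointwise and is dominated by $|m_k(v)|$ times a bounded function of $t$.

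For the spectrum, I would fix any $\lambda \in \C$ with $\mathrm{Re}\,\lambda > -c_0$ and show that $\lambda - \hat{A}(k)$ is boundedly invertible, which identifies it as a resolvent point. The candidate inverse is multiplication by $(\lambda - m_k(v))^{-1}$, and from
\[
|\lambda - m_k(v)| \ge \mathrm{Re}\,\lambda - \mathrm{Re}\, m_k(v) = \mathrm{Re}\,\lambda + \nu(v) \ge \mathrm{Re}\,\lambda + c_0 > 0,
\]
the function $(\lambda - m_k(v))^{-1}$ is uniformly bounded by $(\mathrm{Re}\,\lambda + c_0)^{-1}$, so multiplication by it is a bounded operator on $L^2(\R^3_v)$ mapping into $D(\hat{A}(k))$. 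This gives $\sigma(\hat{A}(k)) \subset \{\lambda : \mathrm{Re}\,\lambda \le -c_0\}$.

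Honestly there is no real obstacle here; the only thing worth being careful about is using the correct inequality $\nu(v) \ge c_0$ (which needs the hypothesis $\beta \ge 0$ in \eqref{ker} together with $v_0 \ge 1$) rather than the weaker inequality $\nu(v) \ge 0$, since the contraction constant $c_0$ in the statement is exactly the constant appearing in Lemma \ref{Kbounded}(i).
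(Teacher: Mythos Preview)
Your proof is correct. The difference from the paper is one of method: the paper treats $\hat{A}(k)$ abstractly, verifying that both $\hat{A}(k)$ and its adjoint $\hat{A}(-k)$ are dissipative (via ${\rm Re}(\hat{A}(k)f,f)=-(\nu f,f)\le -c_0\|f\|^2$), checking that $\hat{A}(k)$ is densely defined and closed, and then invoking the Lumer--Phillips / Pazy machinery to get the contraction semigroup; the spectral inclusion then follows from the Hille--Yosida bound on the generator. You instead use that $\hat{A}(k)$ is a multiplication operator and write down the semigroup and the resolvent explicitly as multiplication by $e^{tm_k(v)}$ and $(\lambda-m_k(v))^{-1}$. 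Your route is more elementary and self-contained for this particular operator, while the paper's dissipativity argument is the template that is reused (with minor changes) for $\hat{B}(k)$ in the next lemma, where $K$ is no longer a multiplication operator and your explicit approach would not apply directly.
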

\begin{proof}By  \eqref{nu} and \eqref{A}, we have for any $f\in D(\hat{A}(k))$ that
\be
   {\rm Re}(\hat{A}(k)f,f)= {\rm Re}(\hat{A}(-k)f,f)=-(\nu f,f)
   \le -c_0\|f\|^2,
\ee
which proves that $\hat{A}(k)$ and $\hat{A}(-k)$ are dissipative operators on $L^2(\R^3_v)$.

 Note that $\hat{A}(k)$ is densely defined in $L^2(\R^3_v)$,   and the adjoint operator $\hat{A}(-k)$ of $\hat{A}(k)$ is also densely defined in $L^2(\R^3_v)$. This implies that $\hat{A}(k)$ is a closed operator in $L^2(\R^3_v)$ \cite{Reed}. Thus, it
follows from Corollary 4.4 on p.15 of \cite{Pazy} that $\hat{A}(k)$ generates a continuous contraction semigroup $e^{t\hat{A}(k)}$ on $L^2(\R^3_v)$. Then, by direct computation, we can show for any $t\ge0$ that
 \bq
 \|e^{t\hat{A}(k)}f\|\le e^{-c_0t}\|f\|.
 \eq
 By the semigroup theory, we find
 $$\{\lambda:{\rm Re}\lambda>-c_0\}\subset\rho(\hat{A}(k)).$$
 Thus we have
 $$\sigma(\hat{A}(k))\subset\{\lambda:{\rm Re}\lambda\le-c_0\}.$$
 This proves the lemma.
\end{proof}

\begin{lem}\label{B strongly contraction semigroup}
The operator $\hat{B}(k)$ generates a continuous contraction semigroup on $L^2(\R^3_v)$, which satisfies
\bq
\|e^{t\hat{B}(k)}f\|\le\|f\|,\quad   t>0, \,\,\, f\in L^2(\R_v^3).
\eq
\end{lem}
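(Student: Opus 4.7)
The plan is to mimic the proof of Lemma 2.2 (\texttt{spectrumA}) for $\hat A(k)$, but now including the compact part $K$. I would write $\hat B(k)=\hat A(k)+K$ and use that $K$ is a bounded (in fact compact, by Lemma 2.1) operator on $L^2(\R^3_v)$, so that $D(\hat B(k))=D(\hat A(k))$ and $\hat B(k)$ is densely defined. The key computation is the dissipativity estimate
\be
\mathrm{Re}(\hat B(k)f,f)=\mathrm{Re}(Lf,f)+\mathrm{Re}(-ik\cdot\hat v f,f)=(Lf,f),
\ee
where I use that $L$ is self-adjoint (so $(Lf,f)$ is real) and that multiplication by the real-valued function $k\cdot\hat v$ makes $-ik\cdot\hat v$ skew-adjoint on $L^2(\R^3_v)$, hence contributes a purely imaginary inner product with zero real part.

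Next I would invoke the spectral property quoted before Theorem 1.1: $(Lf,f)\le -\mu(P_1f,P_1f)\le 0$. This immediately gives
\be
\mathrm{Re}(\hat B(k)f,f)\le 0,\qquad f\in D(\hat B(k)),
\ee
so $\hat B(k)$ is dissipative. By an identical computation applied to $\hat B(-k)=L+ik\cdot\hat v$, the adjoint is dissipative as well; in particular $\hat B(-k)$ is densely defined, so $\hat B(k)$ is a closed operator by the same reference to \cite{Reed} used in Lemma 2.2. Then Corollary 4.4 on p.15 of \cite{Pazy} (the Lumer--Phillips-type statement already cited) yields that $\hat B(k)$ generates a continuous contraction semigroup $e^{t\hat B(k)}$ on $L^2(\R^3_v)$.

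The contraction estimate $\|e^{t\hat B(k)}f\|\le\|f\|$ then follows by a routine energy identity: for $f\in D(\hat B(k))$,
\be
\tfrac{d}{dt}\|e^{t\hat B(k)}f\|^2=2\mathrm{Re}(\hat B(k)e^{t\hat B(k)}f,e^{t\hat B(k)}f)\le 0,
\ee
and the inequality extends to all $f\in L^2(\R^3_v)$ by density. There is no real obstacle here beyond verifying the two ingredients (self-adjointness of $L$ and non-positivity of $(Lf,f)$), both of which are already recorded in the excerpt; unlike Lemma 2.2, one cannot obtain a strict exponential decay rate such as $e^{-c_0t}$, because $(Lf,f)$ vanishes on the five-dimensional null space $N_0$, so the best uniform bound available at this stage is exactly the contraction estimate stated in the lemma.
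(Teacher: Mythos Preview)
Your proposal is correct and follows essentially the same route as the paper: compute $\mathrm{Re}(\hat B(k)f,f)=(Lf,f)\le 0$ to obtain dissipativity of $\hat B(k)$ and of its adjoint $\hat B(-k)$, note that $\hat B(k)$ is densely defined and closed, and then invoke the Lumer--Phillips statement (Corollary~4.4 of \cite{Pazy}) to conclude. The paper's own proof is slightly terser but identical in substance; your additional remarks on the energy identity and on why no exponential rate is available are correct elaborations.
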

\begin{proof} We can compute that $\hat{B}(k)$ and $\hat{B}(-k)$ are dissipative. By \eqref{B}, we have
\bq
(\hat{B}(k)f,g)=((L-ik\cdot\hat{v})f,g)=(f,(L+ik\cdot\hat{v})g)=(f,\hat{B}(-k)g).
\eq
Thus
 \bq
 \mathrm{Re}(\hat{B}(k)f,f)= \mathrm{Re}(\hat{B}(-k)f,f)=(Lf,f)\le0,\quad \forall f\in D(\hat{B}(k)).
 \eq
 Note that $\hat{B}(k)$ is a densely defined closed operator in $L^2(\R_v^3)$. Hence, $\hat{B}(k)$ generates a continuous contraction semigroup in $L^2(\R_v^3)$.
\end{proof}

\begin{lem}\label{Specturm set}The following conditions hold for all $k\in \R^3$.
 \begin{enumerate}
\item[(1)]
$\sigma_{ess}(\hat{B}(k))\subset \{\lambda\in \mathbb{C}\,|\, {\rm Re}\lambda\le -c_0\}$ and $\sigma(\hat{B}(k))\cap \{\lambda\in \mathbb{C}\,|\, -c_0<{\rm Re}\lambda\le 0\}\subset \sigma_{d}(\hat{B}(k))$.
\item[(2)]
 If $\lambda(k)$ is an eigenvalue of $\hat{B}(k)$, then ${\rm Re}\lambda(k)<0$ for any $|k|\ne 0$ and $ \lambda(k)=0$ iff $|k|=0$.
 \end{enumerate}
\end{lem}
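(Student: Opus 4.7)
\medskip
\noindent\textbf{Proof proposal.} The plan is to treat the two parts separately: part (1) is a standard Weyl-type perturbation argument, while part (2) is an energy estimate combined with an algebraic analysis on the 5-dimensional null space $N_0$.

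For part (1), I would write $\hat{B}(k)=\hat{A}(k)+K$, where $\hat{A}(k)$ is the multiplication/transport operator from \eqref{A} and $K$ is the compact integral operator from Lemma \ref{Kbounded}(ii). By Lemma \ref{spectrumA}, $\sigma(\hat{A}(k))\subset\{\mathrm{Re}\lambda\le -c_0\}$, and in particular $\sigma_{ess}(\hat{A}(k))\subset\{\mathrm{Re}\lambda\le -c_0\}$. Since compact perturbations preserve the essential spectrum (Weyl's theorem), $\sigma_{ess}(\hat{B}(k))=\sigma_{ess}(\hat{A}(k))\subset\{\mathrm{Re}\lambda\le -c_0\}$. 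Any $\lambda\in\sigma(\hat{B}(k))$ with $-c_0<\mathrm{Re}\lambda\le 0$ therefore lies in $\sigma(\hat{B}(k))\setminus\sigma_{ess}(\hat{B}(k))=\sigma_d(\hat{B}(k))$, i.e., is a discrete eigenvalue of finite multiplicity. The upper bound $\mathrm{Re}\lambda\le 0$ comes from Lemma \ref{B strongly contraction semigroup} (contraction semigroup).

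For part (2), suppose $\hat{B}(k)f=\lambda f$ with $f\ne 0$. Taking the real part of $(\hat{B}(k)f,f)$ and using $\mathrm{Re}(-ik\cdot\hat{v}f,f)=0$ together with the spectral gap estimate for $L$ gives
\be
\mathrm{Re}\,\lambda\,\|f\|^2=(Lf,f)\le -\mu\|P_1f\|^2\le 0,\nnm
\ee
which already shows $\mathrm{Re}\lambda\le 0$. If in addition $\mathrm{Re}\lambda=0$, then $P_1f=0$, so $f=P_0f\in N_0$ and $Lf=0$. The eigenvalue equation then collapses to $-ik\cdot\hat{v}f=\lambda f$ with $\lambda=i\beta$, $\beta\in\R$, i.e.
\be
(k\cdot\hat{v}+\beta)f(v)=0\quad\text{for a.e. }v\in\R^3.\nnm
\ee
This is the key algebraic obstruction. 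Since every element of $N_0$ has the form $p(v,v_0)\sqrt{M(v)}$ for a polynomial $p$ in $v$ and $v_0=\sqrt{1+|v|^2}$, and $\sqrt{M}>0$, either $f\equiv 0$ or $f(v)\ne 0$ on an open set. On such an open set one would need $k\cdot\hat{v}=-\beta$ identically, but $\hat v=v/v_0$ is a real-analytic, non-constant function of $v$ whenever $k\ne 0$, so the level set $\{v:k\cdot\hat v=-\beta\}$ has Lebesgue measure zero. This contradicts $f\ne 0$; hence $\mathrm{Re}\lambda<0$ for all $|k|\ne 0$.

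For the equivalence $\lambda(k)=0\Leftrightarrow |k|=0$: if $|k|=0$ then $\hat{B}(0)=L$ and $0$ is an eigenvalue with eigenspace $N_0$. Conversely, if $|k|\ne 0$ the strict inequality $\mathrm{Re}\lambda(k)<0$ just established rules out $\lambda(k)=0$. The main obstacle is the algebraic step isolating $f$ to the null space $N_0$ and then ruling out non-trivial solutions of $(k\cdot\hat v+\beta)f=0$; everything else is semigroup and Weyl-theory bookkeeping.
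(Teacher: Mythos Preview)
Your proposal is correct and follows essentially the same route as the paper: Weyl's theorem for compact perturbations gives part~(1), and for part~(2) both you and the paper take the real part of the eigenvalue equation to force $f\in N_0$ when $\mathrm{Re}\lambda=0$, then reduce to $(k\cdot\hat v+\beta)f=0$. The only difference is cosmetic: the paper simply asserts that this last equation forces $k=0$ and $\beta=0$, whereas you spell out the measure-zero level-set argument for $k\cdot\hat v$; your version is a bit more explicit but not a genuinely different method.
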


\begin{proof}
By  \eqref{nu} and \eqref{A},  $\lambda-\hat{A}(k)$ is invertible for ${\rm
Re}\lambda>-c_0$. Since $K$ is a compact operator  on $L^2 (\R^3_v)$, $\hat{B}(k)$ is a compact perturbation of $\hat{A}(k)$, and so, thanks
to Theorem 5.35 in p.244 of \cite{Kato}, $\hat{B}(k)$ and $\hat{A}(k)$
have the same essential spectrum, namely, $\sigma_{ess}(\hat{B}(k))=\sigma_{ess}(\hat{A}(k))\subset \{\lambda\in \mathbb{C}\,|\, {\rm Re}\le -c_0\}$. Thus the spectrum of
$\hat{B}(k)$ in the domain ${\rm Re}\lambda>-c_0$ consists of discrete eigenvalues  with possible accumulation
points only on the line ${\rm Re}\lambda= -c_0$. This proves (1).

Next, we prove (2) as follows. Indeed, let $u\ne 0$ be the eigenfunction  of $\hat{B}(k)$ corresponding to the eigenvalue $\lambda$ so that
\bq
\hat{B}(k)u=\lambda u,
\eq
that is
\bq\label{B eigenvalue 1}
(L-ik\cdot\hat{v})u=\lambda u.
\eq
Taking the inner product with $u$ and choosing the real part, we have
\bq\label{B eigenvalue 2}
{\rm Re}\lambda\|u\|^2=(Lu,u) .
\eq
We know that ${\rm Re}\lambda\le0$ from $L$ is non-positive operator. Suppose that there is an eigenvalue $\lambda$ such that ${\rm Re}\lambda=0$ holds, then we have $(Lu,u)=0$ from \eqref{B eigenvalue 2}, which implies that $u\in N_0$. Thus, the eigenvalue problem \eqref{B eigenvalue 1} is transformed to
\bq\label{B eigenvalue 3}
(-ik\cdot\hat{v})u=({\rm Im}\lambda)u.
\eq
It follow that \eqref{B eigenvalue 3} holds if and only if $k=0$ and ${\rm Im}\lambda=0$.
\end{proof}

For ${\rm Re}\lambda>-c_0$, we decompose the
operator $\hat{B}(k)$ into
\be
\lambda-\hat{B}(k)=\lambda-\hat{A}(k)-K=(I-K(\lambda-\hat{A}(k))^{-1})(\lambda-\hat{A}(k)), \label{B_d}
\ee
and estimate the right hand terms of \eqref{B_d} as follows.

\begin{lem}\label{KAbounded}
For any $\delta>0$, we have
 \bq\label{KAbounded1}
 \sup_{{\rm Re}\lambda\ge-c_0+\delta,{\rm Im}\lambda\in\R}\|K(\lambda-\hat{A}(k))^{-1}\|\le C\delta^{-1+\frac{\eta}{3+2\eta}}(1+|k|)^{-\frac{\eta}{3+2\eta}}.
 \eq
For any $\delta$, $\tau_0>0$, there is a constant $\tau_0=2\tau_0>0$ such that if $|{\rm Im}\lambda|\ge \tau_0$, then
  \bq\label{KAbounded2}
 \sup_{{\rm Re}\lambda\ge-c_0+\delta,|k|\le\tau_0}\|K(\lambda-\hat{A}(k))^{-1}\|\le C\delta^{-1-\frac{3}{2\eta+3}}(1+|{\rm Im}\lambda|)^{-\frac{2\eta}{2\eta+3}}.
 \eq
\end{lem}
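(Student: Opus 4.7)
The plan is to exploit that $(\lambda - \hat{A}(k))^{-1}$ is multiplication by $\phi(v)=(\lambda + ik\cdot \hat v + \nu(v))^{-1}$, and then to combine the Schur-type bounds of Lemma \ref{Kbounded} for $K$ with a quantitative gain obtained by integrating $|\phi|^2$ in velocity. First, for $\mathrm{Re}\lambda\ge -c_0+\delta$ one has $\mathrm{Re}(\lambda+ik\cdot\hat v+\nu(v))\ge \delta$ by \eqref{nu}, hence the pointwise estimate $|\phi(v)|\le \delta^{-1}$ and, together with $L^2$-boundedness of $K$ from Lemma \ref{Kbounded}, the trivial bound $\|K(\lambda-\hat A(k))^{-1}\|\le C\delta^{-1}$. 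The entire point of the lemma is to upgrade this trivial bound by quantitative gains in $|k|$ and in $|\mathrm{Im}\lambda|$, so the mechanism must come from the oscillatory denominator $\mathrm{Im}\lambda+k\cdot\hat v$.

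To prove \eqref{KAbounded1}, I would split the kernel of $K$ using a cutoff $R>0$: write $K=K^{R}+K_{R}$, where $K^{R}$ is the truncation of $K(u,v)$ to $\{|u|\le R,\,|v|\le R\}$ and $K_{R}$ is the remainder. For the tail, property (3) of Lemma \ref{Kbounded} with $\alpha=0$ provides polynomial decay in $v_0$, giving $\|K_{R}\|_{L^2\to L^2}\le CR^{-\eta}$ and so $\|K_{R}(\lambda-\hat A(k))^{-1}\|\le CR^{-\eta}\delta^{-1}$. For the truncated part I would use the Hilbert--Schmidt norm and bound $\sup_{|v|\le R}\int_{|u|\le R}|\phi(u)|^2\,du$. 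Changing variables in $u$ along the direction $\omega=k/|k|$ and using an elementary one-dimensional estimate of the form
\be
\int_{\R}\frac{d\xi}{\delta^2+(\mathrm{Im}\lambda+|k|\xi)^2}\le \frac{C}{\delta(1+|k|)},
\ee
together with the fact that the Jacobian of $v\mapsto \hat v$ on $\{|v|\le R\}$ is controlled by a polynomial in $R$, yields a factor of $R^{3}\delta^{-1}(1+|k|)^{-1}$. Consequently $\|K^{R}(\lambda-\hat A(k))^{-1}\|_{HS}^2\le CR^{3}\delta^{-1}(1+|k|)^{-1}$. Optimizing in $R$ by balancing $R^{-\eta}\delta^{-1}$ against $R^{3/2}\delta^{-1/2}(1+|k|)^{-1/2}$, i.e.\ choosing $R\sim(\delta(1+|k|))^{1/(2\eta+3)}$, reproduces exactly the exponents in \eqref{KAbounded1}.

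The estimate \eqref{KAbounded2} follows the same template, with $|k|\le \tau_0$ small and the oscillation coming from $|\mathrm{Im}\lambda|$ instead. In that regime $|\mathrm{Im}\lambda+k\cdot\hat v|\ge \tfrac12|\mathrm{Im}\lambda|$ (this is where the doubling condition on the cutoff in the lemma statement plays its role), so $|\phi(v)|\le 2|\mathrm{Im}\lambda|^{-1}$ uniformly in $v$. Interpolating this against $|\phi|\le \delta^{-1}$ and inserting into the same splitting $K=K^{R}+K_{R}$ gives a truncated piece bounded by $CR^{3/2}\delta^{-1/2}|\mathrm{Im}\lambda|^{-1}$ and a tail bounded by $CR^{-\eta}\delta^{-1}$; balancing these by $R\sim\delta|\mathrm{Im}\lambda|^{2/(2\eta+3)}$ gives the stated exponents $\delta^{-1-3/(2\eta+3)}(1+|\mathrm{Im}\lambda|)^{-2\eta/(2\eta+3)}$.

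The main obstacle is the precise quantitative gain in the integral $\int|\phi|^2\,du$. Because $v\mapsto \hat v=v/v_0$ degenerates at infinity (its Jacobian behaves like $v_0^{-3}$), no uniform gain of $|k|^{-1}$ is available on all of $\R^3_v$; the oscillation is genuinely usable only on bounded velocity slabs, which is precisely why the kernel decay of Lemma \ref{Kbounded} must be invoked to truncate, and why the final exponents are the non-trivial fractions $\eta/(3+2\eta)$ and $2\eta/(2\eta+3)$ rather than the naive $1/2$. Careful book-keeping of the constants in $\delta$ and in $R$ during the optimization is the most delicate step, but is otherwise routine.
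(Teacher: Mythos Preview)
Your approach is essentially the paper's: split at a velocity radius $R$, use the $L^2$ kernel bound (property (2)) together with the oscillation integral $\int_{|u|\le R}|\phi(u)|^2\,du$ for the compact part, use the kernel decay (property (3)) and the trivial bound $|\phi|\le\delta^{-1}$ for the tail, and optimize in $R$. Two minor remarks: the paper evaluates $\int_{|u|\le R}|\phi|^2\,du$ by spherical coordinates in $u$ (substituting $t=\cos\varphi$, then $z=\mathrm{Im}\lambda+|k|\tfrac{rt}{\sqrt{1+r^2}}$) rather than changing variables to $\hat v$, which avoids any Jacobian tracking and gives the clean bound $CR^3|k|^{-1}$; and for \eqref{KAbounded2}, once you have $|\mathrm{Im}\lambda+k\cdot\hat v|\ge\tfrac12|\mathrm{Im}\lambda|$ uniformly in $v$ there is no need to ``interpolate'' --- the paper simply inserts this pointwise bound into $I_1$ to get $I_1\le CR^3(\delta^2+|\mathrm{Im}\lambda|^2)^{-1}$ and then balances against $I_2\le C\delta^{-2}R^{-2\eta}$ with $R=(|\mathrm{Im}\lambda|/\delta)^{2/(2\eta+3)}$ (your stated $R$ and the $\delta^{-1/2}$ factor are arithmetic slips).
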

\begin{proof}
Choosing $R>1$, we decompose
\bma
\|K(\lambda-\hat{A}(k))^{-1}f\|^2& \le 2\int_{\R^3}\bigg(\int_{|u|\le R}K(v,u)(\lambda+\nu(u)+ik\cdot\hat{u})^{-1}f(u)du\bigg)^2dv\nnm\\
&+2\int_{\R^3}\bigg(\int_{|u|\ge R}K(v,u)(\lambda+\nu(u)+ik\cdot\hat{u})^{-1}f(u)du\bigg)^2dv\nnm\\
&=I_1+I_2.
\ema

For $I_1$, we have by Lemma \ref{Kbounded} (3)
\bma
I_1& \le2\int_{\R^3}\int_{|u|\le R}K^2(v,u)f^2(u)du\int_{|u|\le R}(\lambda+\nu(u)+ik\cdot\hat{u})^{-2}dudv\nnm\\
& \le C\int_{|u|\le R}(\lambda+\nu(u)+ik\cdot\hat{u})^{-2}du\|f\|^2.
\ema
We choose $\mathbb{O}$ to be a rotation transform of $\R^3$ satisfying $\mathbb{O}^Tk\rightarrow(0,0,|k|)$, we have
\bma
\int_{|u|\le R}|\lambda+\nu(u)+ik\cdot\hat{u}|^{-2}du& \le\int_{|u|\le R}\frac{1}{({\rm Re}\lambda+c_0)^2+({\rm Im}\lambda+\hat{u}\cdot k)^2}du\nnm\\
&=\int_{|u|\le R}\frac{1}{({\rm Re}\lambda+c_0)^2+({\rm Im}\lambda+\hat{u}_3 |k|)^2}du\nnm\\
&=\int_0^Rdr\int_0^{2\pi}d\theta\int_0^\pi\frac{1}{({\rm Re}\lambda+c_0)^2+({\rm Im}\lambda+|k|\frac{r}{\sqrt{1+r^2}}\cos{\varphi})^2}r^2\sin{\varphi}d\varphi\nnm\\
&=\int_0^Rdr\int_0^{2\pi}d\theta\int_{-1}^1\frac{1}{({\rm Re}\lambda+c_0)^2+({\rm Im}\lambda+|k|\frac{rt}{\sqrt{1+r^2}})^2}r^2dt\nnm\\
&=2\pi\int_0^R\bigg(\int_{{\rm Im}\lambda-\frac{|k|r}{\sqrt{1+r^2}}}^{{\rm Im}\lambda+\frac{|k|r}{\sqrt{1+r^2}}}\frac{1}{({\rm Re}\lambda+c_0)^2+z^2}\cdot\frac{\sqrt{1+r^2}}{r|k|}\cdot r^2dz\bigg)dr\nnm\\
&\le C\int_0^R\frac{\sqrt{1+r^2}}{r|k|}\cdot r^2dr\nnm\\
&\le C|k|^{-1}\int_0^R1+r^2dr\nnm\\
&\le CR^3|k|^{-1},\quad R>1.
\ema

For $I_2$, we have by Lemma \ref{Kbounded} (4)
\be
I_2\le C\delta^{-2}(1+|u|^2)^{-\eta}\le C\delta^{-2}R^{-2\eta}, \quad (\eta>0).
\ee
Choose $R=(|k|/\delta)^{\frac{1}{3+2\eta}}$, we can verify \eqref{KAbounded1}.

If $|k|\le \tau_0$, $|u|\le R$ and $|{\rm Im}\lambda|\ge 2\tau_0$, we have
\bq\label{KAbounded 5}
|{\rm Im}\lambda+k\cdot\hat{v}|\ge|{\rm Im}\lambda|-|k||\hat{v}|\ge|{\rm Im}\lambda|-\frac{\tau_0R}{\sqrt{1+R^2}}\ge \frac{|{\rm Im}\lambda|}{2}.
\eq
Hence
\bma
I_1&\ \le C\int_{|u|\le R}|\lambda+ik\cdot\hat{u}+\nu(u)|^{-2}du\nnm\\
&\ \le C(\delta^2+|{\rm Im}\lambda|^2)^{-1}R^3.
\ema
By taking $R=(|{\rm Im}\lambda|/\delta)^{\frac{1}{\eta+\frac{3}{2}}}$, we can verify \eqref{KAbounded2}.
\end{proof}

\begin{thm}\label{high frequency spectrum}
(1) For any $\tau_0>0$, there exists $d(\tau_0)>0$ such that when $|k|>\tau_0$,
\bq
\sigma(\hat{B}(k))\subset\{\lambda\in \C\mid {\rm Re}\lambda<-d(\tau_0)\}.
\eq

(2) For any $\delta>0$ and all $k\in \R^3$, there exists $y_1=y_1(\delta)>0$ such that
\bq
\rho(\hat{B}(k))\supset\{\lambda\in\C \mid {\rm Re}\lambda\ge-c_0+\delta,\, |{\rm Im}\lambda|\ge y_1\}\cup\{\lambda\in\C \mid {\rm Re}\lambda>0\}. \label{resov}
\eq
\end{thm}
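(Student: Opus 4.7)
My plan is to prove part (2) first, because its resolvent estimate is the main ingredient that also drives part (1). The starting point is the factorization already recorded in the excerpt,
\begin{equation*}
\lambda-\hat{B}(k)=(I-K(\lambda-\hat{A}(k))^{-1})(\lambda-\hat{A}(k)),
\end{equation*}
so that on the half-plane $\{\mathrm{Re}\,\lambda>-c_0\}$, where $(\lambda-\hat A(k))^{-1}$ exists by Lemma \ref{spectrumA}, it suffices to show $\|K(\lambda-\hat{A}(k))^{-1}\|\le \tfrac12$ to conclude that $\lambda\in\rho(\hat{B}(k))$ via Neumann series. To cover all $k\in\R^3$ I would split into two regimes. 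For $|k|\ge R_1$ with $R_1=R_1(\delta)$ large, estimate \eqref{KAbounded1} from Lemma \ref{KAbounded} gives the required smallness uniformly in $\lambda$ with $\mathrm{Re}\,\lambda\ge-c_0+\delta$. For $|k|\le R_1$, I invoke \eqref{KAbounded2} with the parameter equal to $R_1$; since its bound decays in $|\mathrm{Im}\,\lambda|$, I can choose $y_1=y_1(\delta)$ large so that the operator norm is again at most $1/2$. Combining the two regimes gives the first half of \eqref{resov}.

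For the half-plane $\{\mathrm{Re}\,\lambda>0\}$ in part (2), I would argue by Lemma \ref{Specturm set}: the essential spectrum is confined to $\{\mathrm{Re}\,\lambda\le-c_0\}$, so anything in the right half-plane would have to be a discrete eigenvalue; but part (2) of the same lemma forbids eigenvalues with $\mathrm{Re}\,\lambda\ge0$ for $k\ne 0$, and at $k=0$ the operator $\hat B(0)=L$ is non-positive self-adjoint so its spectrum lies in $\{\mathrm{Re}\,\lambda\le 0\}$. Hence $\{\mathrm{Re}\,\lambda>0\}\subset\rho(\hat B(k))$ for every $k$, completing part (2).

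For part (1), fix $\tau_0>0$ and work in the region $|k|>\tau_0$. First, using \eqref{KAbounded1} with $\delta:=c_0/2$, I can pick $R_1$ so large that for $|k|\ge R_1$ and every $\lambda$ with $\mathrm{Re}\,\lambda\ge -c_0/2$ one has $\|K(\lambda-\hat A(k))^{-1}\|\le 1/2$, hence $\sigma(\hat B(k))\subset\{\mathrm{Re}\,\lambda<-c_0/2\}$; this handles the large-$k$ part with $d=c_0/2$. On the compact annulus $\tau_0\le|k|\le R_1$ I would argue by contradiction: suppose there were sequences $k_n$ in this annulus and $\lambda_n\in\sigma(\hat B(k_n))$ with $\mathrm{Re}\,\lambda_n\to 0^-$. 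By part (2), the spectrum in $\{\mathrm{Re}\,\lambda>-c_0/2\}$ lies inside the strip $|\mathrm{Im}\,\lambda|\le y_1(c_0/2)$, so $\lambda_n$ is bounded; passing to a subsequence, $k_n\to k_*$ with $|k_*|\ge\tau_0$ and $\lambda_n\to\lambda_*$ with $\mathrm{Re}\,\lambda_*=0$. Then I would use upper semicontinuity of the spectrum under the norm-continuous perturbation $\hat B(k)-\hat B(k_*)=-i(k-k_*)\cdot\hat v$ (which is bounded because $|\hat v|\le 1$) to conclude $\lambda_*\in\sigma(\hat B(k_*))$, contradicting Lemma \ref{Specturm set}(2) at $k_*\ne 0$.

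The main obstacle I expect is the compactness-continuity step on the annulus. Concretely, one needs that whenever $\lambda_*\in\rho(\hat B(k_*))$ and $(k_n,\lambda_n)\to(k_*,\lambda_*)$, then $\lambda_n\in\rho(\hat B(k_n))$ eventually; I would establish this by the second resolvent identity, writing
\begin{equation*}
(\lambda_n-\hat B(k_n))=(\lambda_n-\hat B(k_*))\bigl(I+(\lambda_n-\hat B(k_*))^{-1}[-i(k_n-k_*)\cdot\hat v-(\lambda_*-\lambda_n)]\bigr),
\end{equation*}
and noting that the bracketed perturbation has norm $\le |k_n-k_*|+|\lambda_n-\lambda_*|\to 0$, so the inner factor is invertible by Neumann series once $n$ is large. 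This yields the desired uniform spectral gap $d(\tau_0)>0$ and completes part (1). The remaining steps—checking that $y_1(\delta)$ can indeed be chosen independent of $k$ by combining \eqref{KAbounded1}–\eqref{KAbounded2}—are routine bookkeeping.
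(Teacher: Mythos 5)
Your proof of part (2) follows the paper's route exactly: the same factorization of $\lambda-\hat B(k)$, the same split between $|k|\ge R_1$ (using \eqref{KAbounded1}) and $|k|\le R_1$ (using \eqref{KAbounded2} with the parameter set to $R_1$), yielding $\|K(\lambda-\hat A(k))^{-1}\|\le 1/2$ in each regime and hence invertibility of $\lambda-\hat B(k)$ by Neumann series. You also supply an explicit argument for $\{\mathrm{Re}\,\lambda>0\}\subset\rho(\hat B(k))$ from Lemma~\ref{Specturm set}, which the paper states but does not really justify; that is a small but welcome addition. For part (1) you and the paper both reduce to the compact annulus $\tau_0\le|k|\le R_1$ and argue by contradiction, but your endgames differ: the paper picks normalized eigenfunctions $\phi_n$ of $\hat B(k_n)$, uses compactness of $K$ to extract a convergent subsequence $K\phi_{n_j}\to g$, and reads off a limiting eigenfunction $f_0$ of $\hat B(k_0)$ with $\mathrm{Re}\,\lambda_0=0$; you instead note that $k\mapsto\hat B(k)$ is norm-continuous because $\hat v$ is bounded (a genuinely relativistic convenience, not available in the classical Boltzmann setting), so the resolvent set is jointly open in $(\lambda,k)$ and the limit point $\lambda_*$ with $\mathrm{Re}\,\lambda_*=0$ would have to lie in $\sigma(\hat B(k_*))$, contradicting Lemma~\ref{Specturm set}. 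Your route avoids touching compactness of $K$ and is conceptually cleaner; the paper's route is lower-tech but also requires the eigenfunction normalization and the pointwise resolvent formula. One small caveat: your displayed factorization has a sign slip — writing $\hat B(k)=L-ik\cdot\hat v$ one has
\begin{equation*}
\lambda_n-\hat B(k_n)=(\lambda_*-\hat B(k_*))\Bigl(I+(\lambda_*-\hat B(k_*))^{-1}\bigl[(\lambda_n-\lambda_*)+i(k_n-k_*)\cdot\hat v\bigr]\Bigr),
\end{equation*}
so the bracket should carry $(\lambda_n-\lambda_*)$ and $+i(k_n-k_*)\cdot\hat v$ and should be preceded by $(\lambda_*-\hat B(k_*))^{-1}$, which is the resolvent you actually know exists; your version factors out $(\lambda_n-\hat B(k_*))$, whose invertibility is not given a priori, and the bracket does not reproduce the left-hand side. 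The argument survives the correction.
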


\begin{proof}
We first show that $\sup_{k\in\R^3}|{\rm Im}\lambda|<\infty$ for any $\lambda\in\sigma(\hat{B}(k))\cap \{\lambda\in \C\mid {\rm Re}\lambda\ge-c_0+\delta\}$. By Lemma \ref{KAbounded}, there exists $\tau_1=\tau_1(\delta)>0$ large enough so that for ${\rm Re}\lambda\ge-c_0+\delta$ and $|k|\ge\tau_1$,
\bq\label{KAleq1/2}
\|K(\lambda-\hat{A}(k))^{-1}\|\le\frac{1}{2}.
\eq
This implies that the operator $I-K(\lambda-\hat{A}(k))^{-1}$ is invertible on $L^2(\R^3_v)$. Since operator $\lambda-\hat{B}(k)$  has the following decomposition for ${\rm Re}\lambda>-c_0$,
$$
\lambda-\hat{B}(k)=\lambda-\hat{A}(k)-K=(I-K(\lambda-\hat{A}(k))^{-1})(\lambda-\hat{A}(k)),
$$
it follows that $\lambda-\hat{B}(k)$ is also invertible on $L^2(\R^3_v)$ for ${\rm Re}\lambda\ge-c_0+\delta$ and $|k|\ge\tau_1$, and it satisfies
\bq\label{Reeigenvalue neg0 2}
(\lambda-\hat{B}(k))^{-1}=(\lambda-\hat{A}(k))^{-1}(I-K(\lambda-\hat{A}(k))^{-1})^{-1},\quad |k|\ge\tau_1,
\eq
namely,
\bq\label{Reeigenvalue neg0 4}
\rho(\hat{B}(k))\supset\{\lambda\in\C \mid {\rm Re}\lambda\ge-c_0+\delta\},\quad  |k|\ge\tau_1.
\eq

For $|k|\le\tau_1$, by Lemma \ref{KAbounded}, there exists $y_1=y_1(\tau_1,\delta)>0$ such that \eqref{KAleq1/2} holds for $|{\rm Im}\lambda|>y_1$. This also implies the invertibility of $\lambda-\hat{B}(k)$ for $|k|\le\tau_1$ and $|{\rm Im}\lambda|>y_1$, namely,
\bq\label{Reeigenvalue neg0 5}
\rho(\hat{B}(k))\supset\{\lambda\in\C\mid {\rm Re}\lambda\ge-c_0+\delta,\, |{\rm Im}\lambda|>y_1\},\quad |k|\le\tau_1.
\eq
By \eqref{Reeigenvalue neg0 4} and \eqref{Reeigenvalue neg0 5}, we obtain \eqref{resov} and hence
\bq
\sigma(\hat{B}(k))\cap\{\lambda\in\C\mid {\rm Re}\lambda\ge-c_0+\delta\}\subset\{\lambda\in\C\mid {\rm Re}\lambda\ge-c_0+\delta,\, |{\rm Im}\lambda|\le y_1\}.
\eq

Next, we want to show that $\sup_{|k|>\tau_0}{\rm Re}\lambda(k)<0$ for any $\lambda\in\sigma(\hat{B}(k))\cap \{\lambda\in \C\mid {\rm Re}\lambda\ge-c_0+\delta\}$. By \eqref{Reeigenvalue neg0 4}, it holds that $\sup_{|k|\ge\tau_1}{\rm Re}\lambda(k)<0$. Hence, it is sufficient to prove that $\sup_{\tau_0< |k|<\tau_1}{\rm Re}\lambda(k)<0$. If it does not hold, there exists a sequence of $\{k_n, \lambda_n, \phi_n\}$ satisfying $|k_n|\in(\tau_0,\tau_1)$, $\phi_n\in D(\hat{B}(k))$ with $\|\phi_n\|=1$ such that
\bq\label{Bkn eigenequation}
\hat{B}(k_n)\phi_n=(L-i\hat{v}\cdot k_n)\phi_n=\lambda_n\phi_n,\quad {\rm Re}\lambda_n\rightarrow0,\,\,n\rightarrow\infty.
\eq
The above equation can be rewritten as
\bq
K\phi_n=(\lambda_n+\nu+i\hat{v}\cdot k_n)\phi_n.
\eq
Since $K$ is a compact operator on $L^2(\R_v^3)$, there exists a subsequence $\phi_{n_j}$ of $\phi_n$ and $g\in L^2(\R^3)$ such that
$$
K\phi_{n_j}\rightarrow g,\quad {\rm as} \quad j\rightarrow\infty.
$$
Since $|k_n|\in(\tau_0,\tau_1)$, $\sup_{|k|>\tau_0}|{\rm Im}\lambda_n|<\infty$, ${\rm Re}\lambda_n\rightarrow0$, there exists a subsequence $(k_{n_j},\lambda_{n_j})$ of $(k_n,\lambda_n)$  such that
$$
k_{n_j}\rightarrow k_0,\quad \lambda_n\rightarrow\lambda_0, \quad {\rm as} \quad j\rightarrow\infty.
$$
Hence we have
\bq
\lim_{j\rightarrow\infty}\phi_{n_j}=\lim_{j\rightarrow\infty}\frac{g}{\lambda_{n_j}+\nu+i\hat{v}\cdot k_{n_j}}=\frac{g}{\lambda_{0}+\nu+i\hat{v}\cdot k_{0}}=f_0.
\eq
Then when $n\rightarrow\infty$, the eigenvalue problem \eqref{Bkn eigenequation} is transformed to
\bq
\hat{B}(k_0)f_0=\lambda_0f_0.
\eq
 Thus $\lambda_0$ is an eigenvalue of $\hat{B}(k_0)$ with ${\rm Re}\lambda_0=0$, which contradicts the fact ${\rm Re}\lambda(k)<0$
for $|k|\ne 0$ established by Lemma~\ref{Specturm set}.
\end{proof}

\begin{thm}\label{Eigenvalues and eigenfunctions}
There exist a constant  $\tau_0>0$ such that the spectrum $ \sigma(\hat{B}(k))\cap \{\lambda\in \mathbb{C}\,|\,{\rm Re}\lambda\ge -\mu/2\}$  consists of five points $\{\lambda_j(|k|),\,j=-1,0,1,2,3\}$ for $|k|\le \tau_0$. The eigenvalues $\lambda_j(|k|)$ and the corresponding eigenfunctions $e_j=e_j(|k|,\omega)$ with $\omega=k/|k|$ are $C^\infty$ functions of $|k|$ for $|k|\le\tau_0$. In particular, the eigenvalues $\lambda_j(|k|)$ admit the following asymptotic expansion for $|k|\le\tau_0$,
\be\label{specr0-1}
\left\{\bln
&\lambda_{\pm1}(|k|)=\pm i\sqrt{a^2+b^2}|k|+A_{\pm1}|k|^2+O(|k|^3),\\
&\lambda_0(|k|)=A_{0}|k|^2+O(|k|^3),\\
&\lambda_2(|k|)=\lambda_3(|k|)=A_{2}|k|^2+O(|k|^3),
\eln\right.\ee
where $a,b>0$ and $A_j>0$, $j=-1,0,1,2$ are constants defined by
\bq \label{a}
\left\{\bal
a=(\hat{v}_1\psi_1,\psi_4)>0,\quad b=(\hat{v}_1\psi_0,\psi_1)>0,\\
A_{j}=- (L^{-1} P_1(\hat{v}\cdot\omega)E_j,(\hat{v}\cdot\omega)E_j)>0,
\\
E_{\pm1}=\sqrt{\frac{b^2}{2a^2+2b^2}}\psi_0\mp\sqrt{\frac{1}{2}}\omega\cdot\psi'+\sqrt{\frac{a^2}{2a^2+2b^2}}\psi_4,\\
E_0 =-\sqrt{\frac{a^2}{a^2+b^2}}\psi_0+\sqrt{\frac{b^2}{a^2+b^2}}\psi_4,\\
E_j =W_j\cdot\psi',\quad j=2,3,
\ea\right.
\eq
$\psi'=(\psi_1, \psi_2, \psi_3)$, and $W_j$, $j=2,3$ are orthonormal vectors satisfying $W_j\cdot\omega=0$.

The eigenfunctions $e_j=e_j(|k|,\omega)$ are orthogonal to each other and satisfy
\be\left\{\bal
(e_i(|k|,\omega),\overline{e_j(|k|,\omega)})=\delta_{ij},\quad i,j=-1,0,1,2,3,\\
 e_j(|k|,\omega)=e_{j,0}+e_{j,1}|k|+O(|k|^2),\quad |k|\le\tau_0,
\ea\right.\ee
where the coefficients $e_{j,n}$ are given as
\bq
  \left\{\bln                      \label{eigf1-1}
 &e_{j,0}=E_j(\omega),\quad j=-1,0,1,2,3,\\
 &e_{l,1}=\sum^1_{n=-1}b^l_{n}E_k+ i L^{-1} P_1(\hat{v}\cdot\omega)E_l,\quad l=-1,0,1, \\
 &e_{n,1}=  i L^{-1} P_1(\hat{v}\cdot\omega)E_n, \quad n=2,3,
  \eln\right.
  \eq  and $b^j_{n}$, $j,n=-1,0,1$ are defined by
  \be
   \left\{\bln
  &b^j_{j}=0,\quad
  b^j_{n}=\frac{(L^{-1}P_1(\hat{v}\cdot\omega)E_{j},(\hat{v}\cdot\omega)E_{n})}{i(u_n-u_j)},\,\,\ j\ne n;\\
  &u_{\pm1}=\mp\sqrt{a^2+b^2}, \quad u_0=0.
   \eln\right.
  \ee
\end{thm}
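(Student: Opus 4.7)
The plan is to apply analytic perturbation theory to $\hat{B}(k) = L + (-ik\cdot\hat{v})$, treating $-ik\cdot\hat{v}$ as a small perturbation of $L$ for $|k|\le\tau_0$. Since $|\hat{v}|\le 1$, the perturbation is norm-bounded by $|k|$, and by Theorem~\ref{high frequency spectrum} together with Lemma~\ref{Specturm set}, $L$ has the isolated eigenvalue $0$ of multiplicity $5$ separated from the rest of its spectrum by a gap of size $\mu$. I would fix a small circle $\Gamma\subset\{\mathrm{Re}\lambda>-\mu/2\}$ enclosing $0$ and separated from the essential spectrum, and define the spectral projection $P(k)=-\frac{1}{2\pi i}\oint_\Gamma(\lambda-\hat{B}(k))^{-1}d\lambda$. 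Standard Kato perturbation estimates on $(\lambda-\hat{B}(k))^{-1}=(\lambda-L)^{-1}(I+ik\cdot\hat{v}(\lambda-L)^{-1})^{-1}\cdots$ show $P(k)$ is $C^\infty$ in $|k|$ with $P(0)=P_0$, so for $|k|\le\tau_0$ the spectrum of $\hat{B}(k)$ inside $\Gamma$ consists of exactly five eigenvalues (counted with multiplicity), and the restriction $\hat{B}(k)|_{\mathrm{Range}\,P(k)}$ is a $C^\infty$-dependent $5\times 5$ matrix.

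Next, I would expand the eigenvalue equation $\hat{B}(k)e=\lambda e$ in powers of $|k|$ by writing $e=\sum_{n\ge 0}|k|^n e_n$ and $\lambda=\sum_{n\ge 0}|k|^n\lambda_n$. The order-zero equation forces $Le_0=0$, so $e_0\in N_0$; the order-one equation $Le_1=(\lambda_1+i\omega\cdot\hat{v})e_0$, projected onto $N_0$, becomes the $5\times 5$ matrix eigenvalue problem
\be
P_0(i\omega\cdot\hat{v})P_0\,e_0=-\lambda_1 e_0.
\ee
In the orthonormal basis $\{\psi_0,\omega\cdot\psi',W_2\cdot\psi',W_3\cdot\psi',\psi_4\}$ of $N_0$ (with $\{\omega,W_2,W_3\}$ an orthonormal frame of $\R^3$), the relations $(\hat{v}_i\psi_j,\psi_k)$ vanish except in the parity pairs yielding only $a=(\hat{v}_1\psi_1,\psi_4)$ and $b=(\hat{v}_1\psi_0,\psi_1)$. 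The matrix decomposes into a $3\times 3$ block coupling $\{\psi_0,\omega\cdot\psi',\psi_4\}$ with eigenvalues $0,\pm\sqrt{a^2+b^2}$ and two trivial transverse modes, producing the stated leading terms $\lambda_{\pm 1}=\pm i\sqrt{a^2+b^2}|k|+O(|k|^2)$, $\lambda_0=O(|k|^2)$, $\lambda_{2,3}=O(|k|^2)$ with eigenvectors $E_{\pm 1},E_0,E_{2,3}$ as in \eqref{a}.

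To extract the second-order constants $A_j$, I would solve the $P_1$-component of the order-one equation explicitly: since $L$ is invertible on $\mathrm{Range}\,P_1$, we get $P_1 e_1=iL^{-1}P_1(\omega\cdot\hat{v})E_j$, while the $P_0$-component $\sum_n b^j_n E_n$ is fixed by the solvability condition at the next order using the off-diagonal structure of $M$. Projecting the order-two equation $Le_2=\lambda_2 e_0+\lambda_1 e_1+i(\omega\cdot\hat{v})e_1$ onto $E_j$ and using $(LE_j,\cdot)=0$ yields
\be
\lambda_2\,(E_j,E_j)=-i\big((\omega\cdot\hat{v})P_1 e_1,E_j\big)=-\big(L^{-1}P_1(\hat{v}\cdot\omega)E_j,(\hat{v}\cdot\omega)E_j\big),
\ee
which gives $A_j>0$ because $L^{-1}$ is negative definite on $\mathrm{Range}\,P_1$ and $P_1(\hat{v}\cdot\omega)E_j\ne 0$ (these are not elements of $N_0$ thanks to the parity argument). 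The eigenfunction formulas in \eqref{eigf1-1} then follow by collecting the $P_0$ and $P_1$ pieces of $e_1$.

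The main technical obstacle is handling the degeneracies smoothly. At $k=0$ there is a $5$-fold degeneracy, which after one step of reduction splits into $\{+,-,0,0,0\}$, so the zero block remains $3$-fold degenerate at leading order (one longitudinal and two transverse). The transverse pair $\lambda_2=\lambda_3$ persists to all orders by rotational invariance of $L$ in the plane orthogonal to $\omega$, and $C^\infty$-smoothness of the individual eigenfunctions $e_2,e_3$ requires choosing $W_2(\omega),W_3(\omega)$ smoothly in $\omega$ and using the fact that the reduced second-order matrix is diagonal in this pair by symmetry. Separating the longitudinal $\lambda_0$ from the transverse $\lambda_{2,3}$ at second order requires verifying that the reduced $3\times 3$ matrix from second-order perturbation preserves the block decomposition, which again follows from the isotropy of $L$ under rotations fixing $\omega$.
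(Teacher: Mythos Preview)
Your proposal is correct in outline but takes a genuinely different route from the paper. You proceed via Kato perturbation theory: a contour-integral projection $P(k)$ reduces the problem to a smooth $5\times 5$ matrix on $\mathrm{Range}\,P(k)$, which you then diagonalize by formal power-series expansion together with symmetry arguments to resolve the remaining degeneracies. The paper instead performs a Lyapunov--Schmidt (macro--micro) reduction: writing $e=P_0e+P_1e$, it solves $P_1e=i|k|(L-\lambda-i|k|P_1(\hat v\cdot\omega))^{-1}P_1(\hat v\cdot\omega)P_0e$ exactly and substitutes back to obtain a \emph{nonlinear} (in $\beta=\lambda/|k|$) $5\times 5$ system on $N_0$. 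After rotating so that $\omega=(1,0,0)$, explicit parity computations show this system decouples into a $3\times 3$ block and two identical scalar equations, and each eigenvalue branch is then extracted individually by the implicit function theorem applied to the determinants $D_0(\beta,s)$ and $D_1(\beta,s)$. The paper's approach trades the conceptual cleanliness of Kato theory for a more hands-on construction: the IFT delivers $C^\infty$ (indeed analytic) smoothness of each $\lambda_j$ separately without ever confronting degenerate perturbation theory, and the exact formula for $P_1e$ makes the eigenfunction expansions \eqref{eigf1-1} and the normalization immediate. Your approach is more standard and arguably more elegant, but the step you flag as the ``main technical obstacle''---smoothly splitting the triple zero $\{0,2,3\}$ at second order---is precisely what the paper's implicit-function-theorem machinery sidesteps entirely.
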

\begin{proof}
Since $L$ is invariant with respect to the rotation $\O$ of $v\in\R^3$, it follow that $\O \hat{B}(k)=\hat{B}(\O^{-1}k)$, which applied to $\hat{B}(k)e=\lambda e$ implies that the eigenvalue $\lambda$ depends only on $|k|$.
Now we consider eigenvalue problem in the form
\bq\label{characteristic equation1}
\hat{B}(k)e=|k|\beta e ,
\eq
that is
\bq\label{characteristic equation2}
(L-i|k|(\hat{v}\cdot\omega))e =|k|\beta e .
\eq
By macro-micro decomposition, the eigenfunction $e$ of \eqref{characteristic equation2} can be divided into
$$
e=P_0e+P_1e=g_0+g_1.
$$
Hence \eqref{characteristic equation2} gives
\bma
|k|\beta g_0=& -P_0[i|k|(\hat{v}\cdot\omega)(g_0+g_1)]\label{asymptotic expansion1},\\
|k|\beta g_1=& Lg_1-P_1[i|k|(\hat{v}\cdot\omega)(g_0+g_1)]\label{asymptotic expansion2}.
\ema
According to \eqref{asymptotic expansion2}, we have
\bq\label{asymptotic expansion3}
g_1=i(L-iP_1(|k|(\hat{v}\cdot\omega))-|k|\beta)^{-1}P_1(|k|(\hat{v}\cdot\omega))g_0.
\eq
By substitute \eqref{asymptotic expansion3} into \eqref{asymptotic expansion1}, we have
\bq\label{asymptotic expansion4}
|k|\beta g_0=-i|k|P_0(\hat{v}\cdot\omega)g_0+|k|P_0(\hat{v}\cdot\omega)(L-iP_1(|k|(\hat{v}\cdot\omega))-|k|\beta )^{-1}P_1(|k|(\hat{v}\cdot\omega))g_0.
\eq

Define the operator $A(\omega)=P_0(\hat{v}\cdot\omega)P_0$. We have the matrix representation of $A(\omega)$ as follows:
\be
\left(
\begin{matrix}
0& b\omega& 0\\
b\omega^T& 0& a\omega^T\\
0& a\omega& 0
\end{matrix}
\right),
\ee
where $\omega^T$ denotes the row vector $\omega's$ transpose and
$$
a=(\hat{v}_1\psi_1,\psi_4), \quad
b=(\hat{v}_1\psi_0,\psi_1) .
$$
It can be verified that the eigenvalues $u_i$ and normalized eigenvectors $E_i$ of $A$ are given by
\be\left\{\bal\label{asymptotic expansion5}
u_{\pm1}=\mp\sqrt{a^2+b^2},\quad u_j=0,\quad j=0,2,3,\\
 E_{\pm1}=\sqrt{\frac{b^2}{2a^2+2b^2}}\psi_0\mp\sqrt{\frac{1}{2}}\omega\cdot\psi'+\sqrt{\frac{a^2}{2a^2+2b^2}}\psi_4,\\
E_0=-\sqrt{\frac{a^2}{a^2+b^2}}\psi_0+\sqrt{\frac{b^2}{a^2+b^2}}\psi_4,\\
E_j=W_j\cdot\psi',\quad j=2,3,\\
 (E_i,E_j)=\delta_{ij},\quad -1\le i,j\le 3,
\ea\right.\ee
where $\psi'=(\psi_1, \psi_2, \psi_3)$ and $W_j$ are 3-dimensional normalized vectors such that
$$
W_2(\omega)\cdot W_3(\omega)=0,\quad W_2(\omega)\cdot\omega=W_3(\omega)\cdot\omega=0.
$$

To solve  the eigenvalue problem \eqref{asymptotic expansion4}, we write $\psi_0\in N_0$ in terms of the basis $E_{j}$  as
 \bq
 \psi_0=\sum_{j=0}^4C_jE_{j-1}\quad \text{with}\quad C_j=(\psi_0,E_{j-1}),\,\, j=0,1,2,3,4,\label{A_5a-1}
\eq
with the unknown coefficients $(C_0,C_1,C_2,C_3,C_4)$ to be determined below. Taking the inner product between \eqref{asymptotic expansion4} and $E_j$ for $j=-1,0,1,2,3$ respectively,
we have the equations about $\beta$ and $C_j$, $j=0,1,2,3,4$ for $\text{Re}\lambda>-\mu$:
 \be
 \beta C_j=-i u_{j-1}C_j +|k|\sum^4_{i=0}C_i D_{ij}(\beta,|k|,\omega),  \label{A_8-1}
 \ee
 where
 \be D_{ij}(\beta,|k|,\omega)=((L-i |k|P_1(\hat{v}\cdot\omega)-|k|\beta )^{-1}P_1 (\hat{v}\cdot\omega)E_{i-1},(\hat{v}\cdot\omega)E_{j-1}).\ee
Let $\O$ be a rotational transformation  in $\R^3$ such that
\be \O^T\omega= (1,0,0), \quad  \O^T W_2= (0,1,0), \quad  \O^T W_3= (0,0,1). \label{rot}\ee
By changing
variable $v\to \O v$, we have
\be
D_{ij}(\beta,|k|,\omega)=((L-i |k|P_1\hat{v}_1-|k|\beta )^{-1}P_1(\hat{v}_1F_{i-1}),\hat{v}_1F_{j-1})=:R_{ij}(\beta,|k|),\label{A_9-1}
\ee
where
\be \label{Fj}
\left\{\bal
 F_{\pm1}=\sqrt{\frac{b^2}{2a^2+2b^2}}\psi_0\mp\sqrt{\frac{1}{2}} \psi_1+\sqrt{\frac{a^2}{2a^2+2b^2}}\psi_4,\\
 F_0=-\sqrt{\frac{a^2}{a^2+b^2}}\psi_0+\sqrt{\frac{b^2}{a^2+b^2}}\psi_4,\\
 F_j= \psi_j,\quad j=2,3,\\
 (F_i,F_j)=\delta_{ij},\quad -1\le i,j\le 3.
\ea\right.
\ee
In particular, $R_{ij}(\beta,|k|)$, $i,j=0,1,2,3,4$ satisfy
\be \label{A_3-1}
\left\{\bln
&R_{ij}(\beta,|k|)=R_{ji}(\beta,|k|)=0,\quad i=0,1,2,\,\, j=3,4, \\
&R_{34}(\beta,|k|)=R_{43}(\beta,|k|)=0,\\
&R_{33}(\beta,|k|)=R_{44}(\beta,|k|).
\eln\right.
\ee
By \eqref{A_9-1} and \eqref{A_3-1}, we can divide \eqref{A_8-1} into two systems:
\bma
 \beta C_j&=-i u_{j-1}C_j +|k|\sum^2_{i=0}C_i R_{ij}(\beta,|k|),\quad j=0,1,2,  \label{A_6-1}\\
 \beta C_l&=-i u_{l-1}C_k +|k| C_l R_{33}(\beta,|k|),\quad l=3,4.  \label{A_7-1}
 \ema
Denote
\bma
D_0(\beta,|k|)&=\beta-|k|R_{33}(\beta,|k|),\label{BM-1}\\
D_1(\beta,|k|)&=\det\left|\bal
\beta+i u_{-1}-|k|R_{00} &  -|k|R_{10} &  -|k|R_{20} \\
-|k|R_{01} &  \beta+i u_0-|k|R_{11} &  -|k|R_{21} \\
-|k|R_{02} &-|k|R_{12} &  \beta+i u_1-|k|R_{22}
\ea\right|.\label{BM-1a}
\ema
The eigenvalues $\beta$ can be solved by $D_0(\beta,|k|)=0$ and $D_1(\beta,|k|)=0$. By a direct computation and the  implicit function theorem, we can show
\begin{lem}
\label{eigen_1}
 The equation $D_0(\beta,s)=0$ has a unique $C^\infty$ solution
$\beta=\beta(s)$ for $(s,\beta)\in[-\tau_0, \tau_0]\times B_{r_1}(0)$ with $\tau_0,r_1>0$ being small constants that satisfies
$$\beta(0)=0,\quad \beta'(0)=(L^{-1} P_1(\hat{v}_1F_2),\hat{v}_1F_2).$$
\end{lem}

We have the following result about  the solution of $D_1(\beta,|k|)=0$.
\begin{lem}\label{eigen_2}
There exist two small constants $\tau_0>0$ and $r_1>0$ so that the equation $D_1(\beta,s)=0$ admits three $C^\infty$ solutions $\beta_j(s)$ $(j=-1,0,1)$ for $(s,\beta_j)\in
[-\tau_0,\tau_0]\times B_{r_1}(-i u_j)$ that satisfy
 \be
 \beta_j(0)=-i u_j, \quad  \beta_j'(0) =(L^{-1} P_1(\hat{v}_1F_j),\hat{v}_1F_j).\label{T_5a-1}
 \ee
Moreover, $\beta_j(s)$  satisfies
 \bq
 -\beta_{j}(-s)=\overline{\beta_{j}(s)} =\beta_{-j}(s),\quad j=-1,0,1.\label{L_8-1}
 \eq
\end{lem}

\begin{proof}From \eqref{BM-1a},
 \bma
D_1(\beta,0)&=\det\left|\bal
\beta+i u_{-1} & 0 &  0 \\
0 &  \beta+i u_0 &  0 \\
0 & 0 &  \beta+i u_1
\ea\right|
\nnm\\
&=(\beta+i u_{-1})(\beta+i u_0)(\beta+i u_1).\label{A_13}
 \ema
 It follows that  $D_1(\beta,0)=0$ has three roots  $\beta_j=-i u_j$ for $j=-1,0,1$.
Since
  \bma
  {\partial_s}D_1(\beta,0)=&-\mathcal{D}_{-1}(\beta+i u_0)(\beta+i u_1)-\mathcal{D}_{0}(\beta+i u_{-1})(\beta+i u_1)\nnm\\
  &-\mathcal{D}_{1}(\beta+i u_{-1})(\beta+i u_0),     
\\
  {\partial_\beta}D_1(\beta,0)=&(\beta+i u_{-1})(\beta+i u_0)+(\beta+i u_{-1})(\beta+i u_1)\nnm\\
  &+(\beta+i u_0)(\beta+i u_1),  \label{lamd1}
 \ema
 where
 \be \mathcal{D}_{j}=(L^{-1}P_1(\hat{v}_1F_j),\hat{v}_1F_j)=A_j,\quad j=-1,0,1,\ee
it follows that
$${\partial_\beta}D_1(-i u_j,0)\ne 0.$$
The implicit function theorem implies that there exist
small constants $\tau_0,r_1>0$ and a unique $C^\infty$ function $\beta_j(s)$: $[-\tau_0,\tau_0]\to B_{r_1}(-i u_j)$ so that $D_1(\beta_j(s),s)=0$ for $s\in [-\tau_0,\tau_0]$, and in particular
\bq
\beta_j(0)=-i u_j,\quad \beta_j'(0)=-\frac{{\partial_s}D_1(-i u_j,0)}{{\partial_\beta}D_1(-i u_j,0)}=A_{j},
 \quad j=-1,0,1.                 \label{lamd2}
 \eq
 This proves \eqref{T_5a-1}.

Since
$ R_{ij}(\beta,-s)=R_{ij}(-\beta,s),  R_{ij}(\overline{\beta},s)=\overline{R_{jj}(\beta,s)}$ for $i,j=0,1,2$,
we obtain by \eqref{BM-1a}  that
$D_1(-\beta,s)=D_1(\beta,-s)$ and $\overline{D_1(\beta,s)}=D_1(\overline{\beta},s)$. This together with  the fact that $\beta_j(s)=-i u_j+O(s)$, $j=-1,0,1$ for $|s|\le \tau_0$ imply \eqref{L_8-1}.
\end{proof}

The eigenvalues $\lambda_j(|k|)$ and the eigenfunctions $e_j(|k|,\omega)$, $j=-1,0,1,2,3$ can be constructed as follows. For $j=2,3$, we take $\lambda_j=|k|\beta(|k|)$ to be the solution of the equation  $D_0(\beta,|k|)=0$ defined in Lemma \ref{eigen_1}, and choose  $C_i=0$, $i\ne j$. Thus the corresponding eigenfunctions $e_j(|k|,\omega)$, $j=2,3$ are defined by
 \bq
  e_j(|k|,\omega) =b_j(|k|)E_j(\omega) +i b_j(|k|)|k|[L-\lambda_j -i |k| P_1(\hat{v}\cdot\omega) ]^{-1}
         P_1(\hat{v}\cdot\omega)E_j(\omega),  \label{C_2-1}
\eq
which are orthonormal, i.e.,  $(e_2(|k|,\omega),\overline{e_3(|k|,\omega)}) =0$.

 For $j=-1,0,1$, we take $\lambda_j=|k|\beta_j(|k|)$ to be a solution of $D_1(\beta,|k|)=0$ given by Lemma \ref{eigen_2}, and choose $C_i=0$, $i=2,3$. Denote by $ \{C^j_0,\, C^j_1,\, C^j_2\}$  a solution of system \eqref{A_6-1} for $\beta=\beta_j(|k|)$. Then we can construct  $e_j(|k|,\omega)$, $j=-1,0,1$ as
 \bq
 \left\{\bln e_j(|k|,\omega)&= P_0e_j(|k|,\omega)+ P_1e_j(|k|,\omega),\\
 P_0e_j(|k|,\omega)&=C^j_0(|k|)E_{-1}(\omega)+C^j_1(|k|)E_0(\omega)+C^j_2(|k|)E_1(\omega),\\
 P_1e_j(|k|,\omega)&=i |k|[L-\lambda_j-i |k| P_1(\hat{v}\cdot\omega) ]^{-1} P_1[(\hat{v}\cdot\omega) P_0e_j(|k|,\omega)].
\eln\right.\label{C_3-1}
\eq

We write
$$(L-i |k|(\hat{v}\cdot\omega) )e_j(|k|,\omega)=\lambda_j(|k|)e_j(|k|,\omega), \quad -1\leq j\leq 3.$$
Taking the inner product $(\cdot,\cdot) $ of the above equation with $\overline{e_j(|k|,\omega)}$ and using the facts that
 \bgrs
 (\hat{B}(k) f,g) =(f,\hat{B}(-k)g) ,\quad f,g\in D(\hat{B}(k)),
\\
 \hat{B}(-k)\overline{e_j(|k|,\omega)} =\overline{\lambda_j(|k|)}\cdot\overline{e_j(|k|,\omega)},
 \egrs
we have
$$
(\lambda_j(|k|)-\lambda_{l}(|k|))(e_j(|k|,\omega),\overline{e_l(|k|,\omega)}) =0,\quad -1\le j, l\le 3.
$$
For $|k|\ne 0$ being sufficiently small, $\lambda_j(|k|)\neq\lambda_{l}(|k|)$ for
$-1\le j\neq l\le2$.  Therefore, we have
$$
(e_j(|k|,\omega),\overline{e_l(|k|,\omega)}) =0,\quad -1\leq j\neq l\leq3.
$$
We can normalize them by taking
$$(e_j(|k|,\omega),\overline{e_j(|k|,\omega)}) =1, \quad -1\leq j\leq 3.$$

The coefficients $b_j(|k|) $ for $j=2,3$ defined in \eqref{C_2-1} are determined by the normalization condition  as
 \be
 b_j(|k|)^2\(1-|k|^2D_j(|k|)\)=1,
 \eq
 where
 $$D_j(|k|)=((L-i |k|P_1\hat{v}_1-\lambda_j )^{-1} P_1\hat{v}_1F_j, (L+i |k|P_1\hat{v}_1-\overline{\lambda_j} )^{-1} P_1\hat{v}_1F_j).$$
 Substituting \eqref{specr0-1} into \eqref{C_2-1}, we obtain
 $$b_j(|k|)=1+\frac12|k|^2\|L^{-1}P_1\hat{v}_1F_j\|^2+O(|k|^3).$$
 This and \eqref{C_2-1} give the expansion of $e_j(|k|,\omega)$ for $j=2,3$, stated in \eqref{eigf1-1}.

To obtain expansion of $e_j(|k|,\omega)$ for $j=-1,0,1$ defined in
\eqref{C_3-1}, we deal with its macroscopic part and microscopic part respectively.
By \eqref{A_6-1}, the macroscopic part $ P_0e_j(|k|,\omega)$ is determined in terms of  the coefficients $ \{C^j_0(|k|),\, C^j_1(|k|),\, C^j_2(|k|)\}$ that satisfy
 \bq \label{expan2-1}
\beta_j(|k|) C^j_l(|k|)=-i u_{l-1}C^j_l(|k|)+|k|\sum^2_{n=0}C^j_n(|k|)R_{nl}(\beta_j,|k|),\quad l=0,1,2.
 \eq

 Furthermore, we have the normalization condition:
\be 1\equiv(e_j(|k|,\omega),\overline{e_j(|k|,\omega)}) =C^j_0(|k|)^2 +C^j_1(|k|)^2+C^j_2(|k|)^2+O(|k|^2),\quad  |k|\le \tau_0. \label{normal-1}\ee

Assume that
  $$
  C^j_l(|k|)= \sum_{n=0}^1  C^j_{l,n}|k|^n +O(s^2),\quad l=0,1,2,\,\, j=-1,0,1.
$$
Substituting the above expansion and \eqref{specr0-1} into \eqref{expan2-1} and \eqref{normal-1}, we have their expressions as
 \bma
O(1)&\qquad\qquad  \left\{\bal
-i u_jC^j_{l,0}=-i u_{l-1}C^j_{l,0}, \\
(C^j_{0,0})^2+(C^j_{1,0})^2+(C^j_{2,0})^2=1,
\ea\right. \label{m_4-1}
 \\
O(|k|) &\qquad\qquad  \left\{\bal
-i u_jC^j_{l,1}+ A_{j} C^j_{l,0}=-i u_{l-1}C^j_{l,1}+\sum^2_{n=0}C^j_{n,0}\mathcal{D}_{n-1,l-1},
 \\
C^j_{0,0}C^j_{0,1}+C^j_{1,0}C^j_{1,1}+C^j_{2,0}C^j_{2,1}=0,
 \ea\right.\label{C_6-1}
 \ema
 where $j=-1,0,1$, $l=0,1,2,$ and
 $$\mathcal{D}_{n,l}=(L^{-1}P_1(\hat{v}_1F_{n}),L^{-1}P_1(\hat{v}_1F_{l})).$$
By a direct computation, we obtain from \eqref{m_4-1}--\eqref{C_6-1} that
\be \label{expan3-1}
\left\{\bal
C^j_{j+1,0}=1,\quad C^j_{l,0}=1,\quad l\ne j+1,\\
C^j_{j+1,1}=0,\quad C^j_{l,1}=\frac{\mathcal{D}_{j,l-1}}{i(u_{l-1}-u_j)},\quad l\ne j+1.
\ea\right.
\ee
By \eqref{C_3-1} and \eqref{expan3-1},
we can obtain the expansion of $e_j(|k|,\omega)$ for $j=-1,0,1$ given in \eqref{eigf1-1}. The proof of this theorem is completed.
\end{proof}

\begin{remark}
(1) By changing variable $v\to \O v$ with $\O$ defined by \eqref{rot},  it is easy to verify that
\be  (L^{-1}P_1(\hat{v}\cdot\omega)E_j,(\hat{v}\cdot\omega)E_j)=(L^{-1}P_1( \hat{v}_1F_j),\hat{v}_1F_j),\ee
where $F_j$, $j=-1,0,1,2,3$ is given by \eqref{Fj}. Thus, the coefficients $A_j>0$ does not depend on $\omega$.

(2) Since $R_{ij}(\beta,s)$, $i,j=1,2,4$ are analytic in $(s,\beta)$, it follows that $D_0(\beta,s)$ and $D(\beta,s)$ are analytic in $(s,\beta)$. By implicit function theorem (section 8 of chapter 0 in \cite{homo}), the solution  $\beta(s)$ to $D_0(\beta,s)=0$   and the solutions $\beta_j(s)$, $j=-1,0,1$ to $D_1(\beta,s)=0$ are analytic functions of $s$ for $|s|\le \tau_0$. Thus,  the eigenvalues $\lambda_j(|k|)$, $j=-1,0,1,2,3$ of $\hat{B}(k)$ are analytic function of $|k|$ for $|k|\le \tau_0$.
\end{remark}
Theorem \ref{global spectrum} directly follows from Theorem \ref{high frequency spectrum} and Theorem \ref{Eigenvalues and eigenfunctions}.


\section{Optimal time-decay rates of linearized  equation}
\label{sect3}
\setcounter{equation}{0}
In this section, we will establish the optimal time-decay rates of global solution for Cauchy problem \eqref{lrb}.

\subsection{Decomposition and asymptotic behaviors of $e^{t\hat{B}(k)}$}

In this subsection, we decompose the semigroup $G(t,k)=e^{t\hat{B}(k)}$ and study asymptotic behaviors of this semigroup.
\begin{lem}\label{Alt}
For any $f\in L^2(\R_v^3)$ and $x>-c_0$, we have
$$
\int_{-\infty}^{+\infty}\|(x+iy-\hat{A}(k))^{-1}f\|^2dy\le\pi(x+c_0)^{-1}\|f\|^2.
$$
\end{lem}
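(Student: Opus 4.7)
The plan is to exploit the fact that $\hat{A}(k)$ is a pure multiplication operator in $v$, which reduces the estimate to a Fubini argument combined with the elementary integral $\int_{\R}\frac{dy}{a^2+y^2}=\pi/a$.

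First I would write the resolvent explicitly. Since $\hat{A}(k)f(v)=-(\nu(v)+ik\cdot\hat{v})f(v)$ is multiplication by the function $-(\nu(v)+ik\cdot\hat{v})$, the resolvent $(x+iy-\hat{A}(k))^{-1}$ is multiplication by $(x+iy+\nu(v)+ik\cdot\hat{v})^{-1}$, so that
\begin{equation*}
\|(x+iy-\hat{A}(k))^{-1}f\|^{2}=\int_{\R^{3}}\frac{|f(v)|^{2}}{(x+\nu(v))^{2}+(y+k\cdot\hat{v})^{2}}\,dv.
\end{equation*}

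Next I would integrate over $y\in\R$ and interchange the order of integration by Fubini–Tonelli (justified since the integrand is nonnegative). Using the standard identity
\begin{equation*}
\int_{-\infty}^{+\infty}\frac{dy}{(x+\nu(v))^{2}+(y+k\cdot\hat{v})^{2}}=\frac{\pi}{x+\nu(v)},
\end{equation*}
which is valid because $x+\nu(v)>-c_{0}+c_{0}=0$ by the lower bound $\nu(v)\ge c_{0}$ (recall \eqref{nu} together with $v_{0}\ge 1$) and by hypothesis $x>-c_{0}$, I obtain
\begin{equation*}
\int_{-\infty}^{+\infty}\|(x+iy-\hat{A}(k))^{-1}f\|^{2}\,dy=\pi\int_{\R^{3}}\frac{|f(v)|^{2}}{x+\nu(v)}\,dv.
\end{equation*}

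Finally, since $x+\nu(v)\ge x+c_{0}>0$ uniformly in $v$, pulling the minimum out of the integral yields
\begin{equation*}
\pi\int_{\R^{3}}\frac{|f(v)|^{2}}{x+\nu(v)}\,dv\le\frac{\pi}{x+c_{0}}\|f\|^{2},
\end{equation*}
which is the desired bound. There is no real obstacle here; the only point requiring care is the use of the lower bound $\nu(v)\ge c_{0}$ from Lemma \ref{Kbounded}(i) (using $v_0\ge 1$ and $\beta\ge 0$), which ensures the positivity of $x+\nu(v)$ throughout the half-plane $\mathrm{Re}\,\lambda=x>-c_{0}$ and thereby the finiteness of the $y$-integral.
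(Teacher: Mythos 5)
Your proof is correct, but it takes a genuinely different route from the paper. You exploit the fact that $\hat{A}(k)$ is a pure multiplication operator, so the resolvent can be computed pointwise and the $y$-integral evaluates exactly by the elementary Poisson-kernel identity $\int_{\R} \frac{dy}{a^2+y^2}=\pi/a$; the bound then follows from $\nu(v)\ge c_0$. The paper instead represents $(\lambda-\hat{A}(k))^{-1}$ as the Laplace transform $\int_0^\infty e^{-\lambda t}e^{t\hat{A}(k)}\,dt$, recognizes the $y$-dependence as a Fourier transform in $t$, and applies Parseval's equality together with the contraction bound $\|e^{t\hat{A}(k)}f\|\le e^{-c_0t}\|f\|$ from Lemma \ref{spectrumA}. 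Your computation is more elementary and in fact sharper (it gives the exact value $\pi\int|f(v)|^2/(x+\nu(v))\,dv$ before the final crude bound), but it relies essentially on $\hat{A}(k)$ being a multiplication operator. The paper's semigroup/Parseval argument is more robust: it would apply verbatim to any densely defined operator generating a $C_0$ semigroup satisfying $\|e^{tA}\|\le e^{-c_0 t}$, even when no explicit diagonalization is available. Both proofs are complete and correct as stated.
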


\begin{proof}
We will use the similar calculation methods as \cite{Ukai1}. By using the Laplace transform, we have
\bq\label{Alt1}
(\lambda-\hat{A}(k))^{-1}=\int_{0}^{+\infty}e^{-\lambda t}e^{t\hat{A}(k)}dt,\quad {\rm Re}\lambda>-c_0,
\eq
which leads to
$$
(x+iy-\hat{A}(k))^{-1}=\frac1{\sqrt{2\pi}}\int_{-\infty}^{+\infty}e^{-iyt}\[ \sqrt{2\pi}1_{\{t\geq0\}}e^{-tx }e^{t\hat{A}(k)}\]dt,
$$
where the right hand side  is the Fourier transform of the function $\sqrt{2\pi}1_{\{t\geq0\}}e^{-xt}e^{tc(\xi)}$ with respect to $t$.
By Parseval's equality and Lemma \ref{spectrumA}, we have
\bmas
&\int_{-\infty}^{+\infty}\|(x+iy-\hat{A}(k))^{-1}f\|^2dy
=\int_{-\infty}^{+\infty}\|(2\pi)^{\frac{1}{2}}1_{\{t\geq0\}}e^{-xt }e^{t\hat{A}(k)}f\|^2dt\\
=&2\pi\int_{0}^{+\infty}e^{-2xt }\|e^{t\hat{A}(k)}f\|^2dt
\le 2\pi\int_{0}^{+\infty}e^{-2(x+c_0)t}dt\|f\|^2=\pi(x+c_0)^{-1}\|f\|^2.
\emas
This proves the lemma.
\end{proof}

\begin{lem}\label{IKAbounded}
Let  $x_0=-\mu/2 $ and $x_1= -d(\tau_0)$ with $d(\tau_0)$ defined in Theorem~\ref{high frequency spectrum}. Then, there exists a constant $C>0$ such that
\bq
\sup_{k\in\R^3,y\in\R}\|(I-K(-z+iy-\hat{A}(k))^{-1})^{-1}\|\le C,
\eq where $z=x_0$ for $|k|< \tau_0$ and $z=x_1$ for $|k|\ge \tau_0$.
\end{lem}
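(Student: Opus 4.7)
The plan is a compactness-plus-smallness argument in the parameter space $(k,y)$: Lemma \ref{KAbounded} will make the norm $\|K(\lambda-\hat A(k))^{-1}\|$ small outside a compact set in $(k,y)$ so that a Neumann series gives invertibility there; compactness of $K$ together with the spectral descriptions of Theorems \ref{high frequency spectrum} and \ref{Eigenvalues and eigenfunctions} will then handle the remaining compact region by contradiction. Write $\lambda=-z+iy$; in both cases $-z>-c_0$, so $\lambda\in\rho(\hat A(k))$ by Lemma \ref{spectrumA} and the factorization $\lambda-\hat B(k)=(I-K(\lambda-\hat A(k))^{-1})(\lambda-\hat A(k))$ is meaningful.

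\textit{Step 1 (Neumann region).} Fix $\delta>0$ small enough that $-z\ge-c_0+\delta$ in both regimes. By Lemma \ref{KAbounded}\,\eqref{KAbounded1}, $\|K(\lambda-\hat A(k))^{-1}\|=O((1+|k|)^{-\eta/(3+2\eta)})$ uniformly in $y$, so there exists $K_0\ge\tau_0$ such that the norm is $\le 1/2$ whenever $|k|\ge K_0$. By Lemma \ref{KAbounded}\,\eqref{KAbounded2}, there exists $Y_0>0$ such that the norm is $\le 1/2$ whenever $|k|\le K_0$ and $|y|\ge Y_0$. On each subset the geometric series yields $\|(I-K(\lambda-\hat A(k))^{-1})^{-1}\|\le 2$.

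\textit{Step 2 (Compact region, contradiction and spectral exclusion).} What remains is a uniform bound on the compact sets $\Omega_{\mathrm{lo}}=\{|k|\le\tau_0,\,|y|\le Y_0\}$ (with $z=x_0$) and $\Omega_{\mathrm{hi}}=\{\tau_0\le|k|\le K_0,\,|y|\le Y_0\}$ (with $z=x_1$). If the bound fails on $\Omega_{\mathrm{lo}}$ (the other case is analogous), there exist sequences $(k_n,y_n,f_n)$ with $\|f_n\|=1$ and $\|(I-K(\lambda_n-\hat A(k_n))^{-1})f_n\|\to 0$, where $\lambda_n=-z+iy_n$. After extraction $k_n\to k_0$, $y_n\to y_0$, and $\lambda_n\to\lambda_0:=-z+iy_0$. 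Setting $h_n=(\lambda_n-\hat A(k_n))^{-1}f_n$ and using the multiplier form $\hat A(k)=-\nu(v)-ik\cdot\hat v$, one gets the pointwise bound $|h_n(v)|\le|f_n(v)|/(c_0-z)$, hence $\|h_n\|\le C$. Compactness of $K$ yields (after further extraction) $Kh_n\to f_0$ strongly in $L^2_v$, and the approximate identity $f_n=Kh_n+o(1)$ promotes this to $f_n\to f_0$ with $\|f_0\|=1$. Dominated convergence applied to the multiplier representation of $h_n$ (with dominating function $2|f_0|/(c_0-z)$) then gives $h_n\to h_0:=(\lambda_0-\hat A(k_0))^{-1}f_0$ strongly, whence $f_0=Kh_0$, i.e., $(\lambda_0-\hat B(k_0))h_0=0$ with $h_0\ne 0$. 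Thus $\lambda_0$ is an eigenvalue of $\hat B(k_0)$. This contradicts the spectral pictures: on $\Omega_{\mathrm{lo}}$, $|k_0|\le\tau_0$ and $\mathrm{Re}\,\lambda_0=-\mu/2$, while Theorem \ref{Eigenvalues and eigenfunctions} identifies the only spectrum of $\hat B(k_0)$ with real part $\ge-\mu/2$ as $\{\lambda_j(|k_0|)\}_{j=-1}^{3}$, all satisfying $\mathrm{Re}\,\lambda_j(|k_0|)=-A_j|k_0|^2+O(|k_0|^3)>-\mu/2$ strictly for $\tau_0$ chosen small enough; on $\Omega_{\mathrm{hi}}$, $|k_0|\ge\tau_0$ and $\mathrm{Re}\,\lambda_0=-d(\tau_0)$, but Theorem \ref{high frequency spectrum}\,(1) (applied at a slightly smaller threshold to absorb the boundary case $|k_0|=\tau_0$) forces $\sigma(\hat B(k_0))\subset\{\mathrm{Re}\,\lambda<-d(\tau_0)\}$ strictly, again a contradiction.

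The main obstacle is the upgrade step in Step 2: passing from the weak data (an approximate identity together with a uniform $L^2_v$ bound) to the strong $L^2_v$ convergence of both $f_n$ and $h_n$. What unlocks it is the explicit pointwise multiplier representation of $(\lambda-\hat A(k))^{-1}$, which supplies a single $(k,y)$-uniform dominating function and so lets dominated convergence work; note that $(\lambda-\hat A(k))^{-1}$ is not operator-norm continuous in $k$, so a naive continuity argument is unavailable.
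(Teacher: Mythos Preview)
Your proof is correct and takes essentially the same route as the paper: a Neumann-series bound via Lemma~\ref{KAbounded} outside a compact set in $(k,y)$, then a compactness/contradiction argument on the remaining compact region using the multiplier form of $(\lambda-\hat A(k))^{-1}$ and the compactness of $K$ to manufacture an eigenvalue of $\hat B(k_0)$ that is excluded by Theorems~\ref{high frequency spectrum} and~\ref{Eigenvalues and eigenfunctions}. The only difference is cosmetic normalization---your $f_n$ with $\|f_n\|=1$ and $\|(I-K(\lambda_n-\hat A(k_n))^{-1})f_n\|\to 0$ is exactly the paper's $g_n$ (with the paper's $f_n$ playing the role of the vanishing residual), and your $h_n$ is the paper's $w_n$.
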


\begin{proof}
Let $\lambda=z+ i y$ with $z=x_0$ for $|k|< \tau_0$ and $z=x_1$ for $|k|\ge \tau_0$.
By Lemma \ref{KAbounded} and Theorem \ref{Eigenvalues and eigenfunctions}, we have $\lambda\in \rho(\Hat{B}(k))$, namely, $\lambda-\Hat{B}(k)$ is invertible. Thus
$$I-K(\lambda-\hat{A}(k))^{-1}=(\lambda-\Hat{B}(k))(\lambda-\hat{A}(k))^{-1}$$
is also invertible. By Lemma \ref{KAbounded},  there exist $R_0,R_1>0$ large enough such that for either $|k|\ge R_0$, or $|k|\le R_0$ and $|y|\ge R_1$,
\bq
\|K(\lambda-\hat{A}(k))^{-1}\|\le \frac{1}{2},
\eq
which yields
$$
\|(I-K(\lambda-\hat{A}(k))^{-1})^{-1}\|\le 2.
$$

It is sufficient to prove \eqref{IKAbounded} for $|y|\le R_1$ and $|k|\le R_0$. If it does not hold,  then there are sequences $\{k_n,\lambda_n=z+iy_n\}$ with $|k_n|\le R_0$, $|y_n|\le R_1$, and $\{f_n,g_n\}$ with $\|f_n\|\to0$,  $\|g_n\|=1$ such that
\bq\label{IKAbounded1}
g_n=(I-K(\lambda_n-\hat{A}(k_n))^{-1})^{-1}f_n .
\eq
This gives
\bq\label{w_n}
g_n-K(\lambda_n-\hat{A}(k_n))^{-1}g_n=f_n.
\eq
Let
$$w_n=(\lambda_n-\hat{A}(k_n))^{-1}g_n.$$
We can write \eqref{w_n} as
\bq\label{IKAbounded2}
(\lambda_n-\hat{A}(k_n))w_n-Kw_n=f_n.
\eq
Since
$$
\|w_n\|\le\|(\lambda-\hat{A}(k_n))^{-1}\| \|g_n\|\le C,
$$
 and $K$ is a compact operator  on $L^2(\R^3)$, there exists a
subsequence $w_{n_j}$ of $w_n$ and $h_0\in L^2(\R^3)$ such that
\bq\label{IKAbounded3}
Kw_{n_j}\to h_0, \quad \mbox{as}\quad j\to\infty.
\eq

Since $|k_n|\le R_0$, $|y_n|\le R_1$, there exists a subsequence of  (still denoted by)  $\{\lambda_{n_j}, k_{n_j}\}$  and $(\lambda_0, k_0) $ with $\lambda_0=z+iy_0$, $| k_0|\le R_0$, $|y_0|\le R_1$ such that
$$
\lambda_{n_j}\rightarrow \lambda_0,\quad  k_{n_j}\rightarrow  k_0,\quad \mbox{as}\quad j\rightarrow\infty.
$$

 Noting that $\lim_{n\rightarrow\infty}\|f_{n}\|=0$, we have by \eqref{IKAbounded2} and \eqref{IKAbounded3} that
$$
\lim_{j\rightarrow\infty}w_{n_j}=\lim_{j\rightarrow\infty}\frac{Kw_{n_j}+f_{n_j}}{\lambda_{n_j}+\nu(v)+i (\hat{v}\cdot k_{n_j})}=\frac{h_0}{\lambda_{0}+\nu(v)+i (\hat{v}\cdot k_{0})}=w_0,
$$
and hence $Kw_0=h_0$. Thus
$$
Kw_0=(\lambda_0+\nu(v)+i (\hat{v}\cdot k_{0}))w_0.
$$
It follows that $\lambda_0w_0=\hat{B}( k_0)w_0$ and  $\lambda_0$ is an eigenvalue of $\hat{B}( k_0)$ with ${\rm Re}\lambda_0=z$,  which contradicts the facts that ${\rm Re}\lambda(k)=\lambda_j(|k|)$, $j=-1,0,1,2,3$ for $|k|\le \tau_0$ and ${\rm Re}\lambda(k)< -d(\tau_0)$ for $|k|\ge \tau_0$.
\end{proof}

Then, we have the decomposition of the semigroup $G(t,k)=e^{t\hat{B}(k)}$ as below.
\begin{thm}\label{decomposition 0}
The semigroup $G(t,k)=e^{t\hat{B}(k)}$ with $k=|k|\omega$ satisfies
\bq\label{decomposition}
G(t,k)f=G_1(t,k)f+G_2(t,k)f,\quad f\in L^2(\R_v^3),
\eq
where
\bq
 G_1(t,k)=\sum_{j=-1}^3e^{\lambda_j(|k|)t}(f,\overline{e_j(|k|,\omega)})e_j(|k|,\omega)1_{\{|k|\le \tau_0\}},
 \eq
with $(\lambda_j(|k|),e_j(|k|,\omega))$ being the eigenvalue and eigenfunction of the operator $\hat{B}(k)$ given by Theorem~\ref{Eigenvalues and eigenfunctions} for $|\xi|\le \tau_0$, and $G_2(t,k)f =: G(t,k)f-G_1(t,k)f$ satisfy for a constant $\sigma_0>0$ independent of $k$,
 \bq
 \|G_2(t,k)f\|_{L_v^2}\le Ce^{-\sigma_0t}\|f\|_{L_v^2}.
\eq
\end{thm}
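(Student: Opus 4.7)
The plan is to obtain the decomposition via the Dunford-Laplace inversion of the semigroup together with a contour shift that peels off the low-frequency discrete eigenvalues as residues. Since $\hat{B}(k)$ generates a contraction semigroup (Lemma \ref{B strongly contraction semigroup}), Hille-Yosida theory yields, for any $\kappa>0$ and $f\in L^2(\R_v^3)$,
\[
e^{t\hat{B}(k)}f=\frac{1}{2\pi i}\lim_{Y\to\infty}\int_{\kappa-iY}^{\kappa+iY}e^{\lambda t}(\lambda-\hat{B}(k))^{-1}f\,d\lambda,\qquad t>0.
\]
Fix $\sigma_0>0$ smaller than both $\mu/2$ and $d(\tau_0)$, and small enough that the vertical line ${\rm Re}\lambda=-\sigma_0$ lies strictly to the left of every eigenvalue $\lambda_j(|k|)$ for $|k|\le\tau_0$; this is possible because Theorem \ref{Eigenvalues and eigenfunctions} places the five eigenvalues in $\{{\rm Re}\lambda>-\mu/2\}$ and $\lambda_j(|k|)$ is continuous in $|k|$. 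I then shift the contour from ${\rm Re}\lambda=\kappa$ down to ${\rm Re}\lambda=-\sigma_0$, with the horizontal closure pieces at $|{\rm Im}\lambda|\to\infty$ vanishing thanks to the decay of $\|K(\lambda-\hat{A}(k))^{-1}\|$ established in Lemma \ref{KAbounded}.

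For $|k|\le\tau_0$ the deformation crosses exactly the five isolated eigenvalues $\{\lambda_j(|k|)\}_{j=-1}^{3}$; for $|k|>\tau_0$ no eigenvalue is crossed by Theorem \ref{high frequency spectrum}(1). Using the biorthogonality relation $(e_j(|k|,\omega),\overline{e_l(|k|,\omega)})=\delta_{jl}$ proven in Theorem \ref{Eigenvalues and eigenfunctions}, the Riesz projection onto the $\lambda_j$-eigenspace takes the explicit form $P_jf=(f,\overline{e_j(|k|,\omega)})e_j(|k|,\omega)$ (with $\lambda_2=\lambda_3$ treated as a semisimple eigenvalue of multiplicity two supplying the two independent residues for $j=2,3$). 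The residue theorem then produces exactly
\[
G_1(t,k)f=\sum_{j=-1}^{3}e^{\lambda_j(|k|)t}(f,\overline{e_j(|k|,\omega)})e_j(|k|,\omega)\,1_{\{|k|\le\tau_0\}},
\]
and $G_2(t,k)f$ is the contour integral of $e^{\lambda t}(\lambda-\hat{B}(k))^{-1}f$ along ${\rm Re}\lambda=-\sigma_0$.

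To bound $\|G_2(t,k)f\|_{L^2_v}$ I use the factorization \eqref{B_d} and the resolvent identity
\[
(\lambda-\hat{B}(k))^{-1}=(\lambda-\hat{A}(k))^{-1}+(\lambda-\hat{A}(k))^{-1}K(\lambda-\hat{B}(k))^{-1}.
\]
The first summand contributes a piece bounded by $e^{t\hat{A}(k)}f$, which decays like $e^{-c_0t}$ by Lemma \ref{spectrumA}. For the cross term on $\lambda=-\sigma_0+iy$, Lemma \ref{IKAbounded} gives a uniform-in-$k$ bound on $\|(I-K(\lambda-\hat{A}(k))^{-1})^{-1}\|$, and Lemma \ref{Alt} provides the $L^2(dy)$-estimate for $\|(\lambda-\hat{A}(k))^{-1}g\|_{L^2_v}$. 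Applying the two Lemmas successively to $(\lambda-\hat{A}(k))^{-1}$ on the left and on the right (with the compact factor $K$ in between), together with Cauchy-Schwarz in $y$, yields
\[
\|G_2(t,k)f\|_{L^2_v}\le Ce^{-\sigma_0 t}\|f\|_{L^2_v},
\]
with $C$ and $\sigma_0$ independent of $k$.

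The principal difficulty is that the inverse Laplace integral is only conditionally convergent: $\|(\lambda-\hat{B}(k))^{-1}\|$ does not tend to zero as $|{\rm Im}\lambda|\to\infty$, so neither the deformation of the contour nor the pointwise-in-$t$ estimate of $G_2$ is automatic. The resolution lies entirely in extracting the compact operator $K$: Lemma \ref{KAbounded} then provides the needed decay in $|{\rm Im}\lambda|$ uniformly in $|k|$, legitimizing closing the contour at infinity, and combining it with Lemma \ref{Alt} converts the contour integral into an absolutely convergent one through the Cauchy-Schwarz step above. The remaining subtlety — ruling out accidental spectrum of $\hat{B}(k)$ in the strip $-\sigma_0<{\rm Re}\lambda<0$ for intermediate $|k|$ — is supplied by Theorem \ref{high frequency spectrum} for $|k|\ge\tau_0$ and by Theorem \ref{Eigenvalues and eigenfunctions} for $|k|\le\tau_0$, after which the rest of the argument follows the standard Ukai-type spectral inversion template.
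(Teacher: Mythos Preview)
Your proposal is correct and follows essentially the same route as the paper: Laplace inversion of the semigroup, the resolvent decomposition $(\lambda-\hat{B}(k))^{-1}=(\lambda-\hat{A}(k))^{-1}+Z(\lambda)$ with the compact factor $K$ isolated, a contour shift to ${\rm Re}\lambda=-\sigma_0$ picking up the five eigenprojections as residues, Lemma~\ref{KAbounded} to kill the horizontal segments, and the combination of Lemma~\ref{Alt} with Lemma~\ref{IKAbounded} via Cauchy--Schwarz to bound the remaining vertical integral. The only cosmetic difference is that the paper chooses $\sigma_0$ piecewise ($\mu/2$ for $|k|<\tau_0$, $d(\tau_0)$ for $|k|\ge\tau_0$) to match the exact hypotheses of Lemma~\ref{IKAbounded}, whereas you take a single $\sigma_0$ below both thresholds; either choice works.
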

\begin{proof}The proof method is similar to \cite{Ukai1}. We can establish the asymptotic behavior of $G$
\bq\label{decomposition a}
G(t,k)f=e^{t\hat{B}(k)}f=\frac{1}{2\pi i}\int_{x-i\infty}^{x+i\infty}e^{\lambda t}(\lambda-\hat{B}(k))^{-1}fd\lambda,\quad x>0.
\eq
We can write the second resolvent equation for $\hat{A}(k)$ and $\hat{B}(k)$
\bq
(\lambda-\hat{B}(k))^{-1}=(\lambda-\hat{A}(k))^{-1}+(\lambda-\hat{B}(k))^{-1}K(\lambda-\hat{A}(k))^{-1}.
\eq
Combining this and \eqref{Reeigenvalue neg0 2}, we have
\bq\label{Reeigenvalue neg0 3}
(\lambda-\hat{B}(k))^{-1}=(\lambda-\hat{A}(k))^{-1}+Z(\lambda),
\eq
where
\bq
Z(\lambda)=(\lambda-\hat{A}(k))^{-1}(I-K(\lambda-\hat{A}(k))^{-1})^{-1}K(\lambda-\hat{A}(k))^{-1}.
\eq
Substitute this into \eqref{decomposition a}, we have
\bq\label{decomposition a1}
e^{t\hat{B}(k)}=e^{t\hat{A}(k)}+\frac{1}{2\pi i}\lim_{T\rightarrow\infty} U_{x,T},
\eq
and
\bq
U_{x,T}=\int_{-T}^Te^{(x+iy)t}Z(x+iy)dy,
\eq
where the positive constant $T>y_1$ and $y_1$ is defined by Theorem \ref{high frequency spectrum}. Set
$$\sigma_0=\frac{\mu}2,\,\,\, |k|< \tau_0;\quad \sigma_0=d(\tau_0) ,\,\,\, |k|\ge \tau_0.
$$
Since $Z(\lambda)$ is analytic in ${\rm Re}\lambda>-\sigma_0$ with only finite singularities at the eigenvalues $\lambda_j(|k|),\,j=-1,0,1,2,3$, we can shift the integration path from the ${\rm Re}\lambda=x>0$ to ${\rm Re}\lambda=-\sigma_0$ where $\sigma_0$ is given by Theorem \ref{Eigenvalues and eigenfunctions}. Then by the Residue Theorem, we have
\bq\label{decomposition 1}
U_{x,T}=H_T+2\pi i\sum_{j=-1}^3{\rm Res}\{e^{\lambda t}Z(\lambda)f;\lambda_j(|k|)\}+U_{-\sigma_0,T},
\eq
where Res$\{f(\lambda);\lambda_j\}$ means the residue of $f(\lambda)$ at $\lambda=\lambda_j$ and
$$
H_T=\frac{1}{2\pi i}\(\int_{-\sigma_0-iT}^{x-iT}-\int_{-\sigma_0+iT}^{x+iT}\)e^{\lambda t}Z(\lambda)d\lambda.
$$

We can estimate the right side of \eqref{decomposition 1} as follows. First, from Lemma \ref{KAbounded}, we verify that
\bq
\|H_T\|\rightarrow0,\quad T\rightarrow\infty.
\eq
Next, we have
\bq
{\rm Res}\{e^{\lambda t}Z(\lambda)f;\lambda_j(|k|)\}={\rm Res}\{e^{\lambda t}(\lambda-\hat{B}(k))^{-1};\lambda_j(|k|)\}=e^{\lambda_j(|k|)t}P_j(k)f1_{\{|k|\le \tau_0\}},
\eq
where
$$
P_j(k)f=(f,\overline{e_j(|k|,\omega)})e_j(|k|,\omega).
$$

Define
$$
U_{-\sigma_0,\infty}(t)=\lim_{T\rightarrow\infty}U_{-\sigma_0,T}(t)=\int_{-\infty}^{+\infty}e^{(-\sigma_0+iy)t}Z(-\sigma_0+iy)dy.
$$
By using Lemma \ref{Alt} and Lemma \ref{IKAbounded}, we have for any $f,g\in L^2(\R^3_v)$,
\bma
& |(U_{-\sigma_0,\infty}(t)f,g)|\nnm\\
\le & Ce^{-\sigma_0t}\int_{-\infty}^{+\infty}|(Z(-\sigma_0+iy)f,g)|dy\nnm\\
\le & C\|K\|e^{-\sigma_0t}\int_{-\infty}^{+\infty}\|(-\sigma_0+iy-\hat{A}(k))^{-1}f\|\|(-\sigma_0-iy-\hat{A}(k))^{-1}g\|dy\nnm\\
\le & C\|K\|e^{-\sigma_0t}(c_0-\sigma_0)^{-1}\|f\|\|g\|.
\ema
This implies that
\bq
\|U_{-\sigma_0,\infty}(t)\|\le Ce^{-\sigma_0 t}, \quad t\ge0.
\eq
Hence it follows from \eqref{Reeigenvalue neg0 3} and \eqref{decomposition a1}  that
\bq
G(t,k)f=G_1(t,k)f+G_2(t,k)f,
\eq
where
\bma
& G_1(t,k)f=\sum_{j=-1}^3e^{\lambda_{j}(k)t}P_j(k)f1_{\{|k|\le \tau_0\}}, \\
& G_2(t,k)f=e^{t\hat{A}(k)}f+U_{-\sigma_0,\infty}(t).
\ema
Moreover,
\bq
\|G_2(t,k)f\|\le\|e^{t\hat{A}(k)}f\|+\|U_{-\sigma_0,\infty}(t)\|\le Ce^{-\sigma_0t}\|f\|.
\eq
This proves the theorem.
\end{proof}

\subsection{Optimal time-decay rates of $e^{t\hat{B}(k)}$}
For any $f_0=L^2(\R^3_{x}\times \R^3_{v})$, set
$$e^{tB}f_0=(\F^{-1}e^{t\hat{B}(k)}\F)f_0.$$
By Lemma \ref{B strongly contraction semigroup}, we have
\bq
\|e^{tB}f_0\|_{H^N}^2=\intr(1+|k|^2)^N\|e^{t\hat{B}(k)}\hat{f}_0\|^2dk\le\intr(1+|k|^2)^N\|\hat{f}_0\|^2dk=\|f_0\|^2_{H^N}.
\eq
This implies that the linear operator $B$ generates a strongly continuous contraction semigroup $e^{tB}$ in $H^N$. Therefore $f(t,x,v)=e^{tB}f_0(x,v)$ is a global solution to \eqref{lrb} for any $f_0\in H^N$.

we have the time decay rates of the linearized relativistic Boltzmann equation \eqref{lrb} as follows.
\begin{thm}\label{Decay estimate upper limit}
Assume that $f_0\in H^N\cap Z_1$ for $N\ge0$. Then the global solution $f(t,x,v)=e^{tB}f_0(x,v)$ to the linearized relativistic Boltzmann equation \eqref{lrb} satisfies for any $\alpha,\alpha'\in\N^3$ with $\alpha'\le \alpha$ that
\bma\label{Decay estimate upper limit 1}
&\ \|(\dxa e^{tB}f_0,\psi_j)\|_{L_x^2}\le C(1+t)^{-(\frac{3}{4}+\frac{\zeta}{2}})(\|\dxa f_0\|_{L^2_{x,v}}+\|\partial_x^{\alpha'}f_0\|_{Z_1}),\quad j=0,1,2,3,4,\\
&\ \|P_1(\dxa e^{tB}f_0)\|_{L^2_{x,v}}\le C(1+t)^{-(\frac{5}{4}+\frac{\zeta}{2}})(\|\dxa f_0\|_{L^2_{x,v}}+\|\partial_x^{\alpha'}f_0\|_{Z_1}),\label{Decay estimate upper limit 2}
\ema
where $\zeta=|\alpha-\alpha'|\le N$. If it further holds that $P_0f_0=0$, then
\bma\label{Decay estimate upper limit 3}
&\ \|(\dxa e^{tB}f_0,\psi_j)\|_{L_x^2}\le C(1+t)^{-(\frac{5}{4}+\frac{\zeta}{2}})(\|\dxa f_0\|_{L^2_{x,v}}+\|\partial_x^{\alpha'}f_0\|_{Z_1}),\quad j=0,1,2,3,4,\\
&\ \|P_1(\dxa e^{tB}f_0)\|_{L^2_{x,v}}\le C(1+t)^{-(\frac{7}{4}+\frac{\zeta}{2}})(\|\dxa f_0\|_{L^2_{x,v}}+\|\partial_x^{\alpha'}f_0\|_{Z_1}).\label{Decay estimate upper limit 4}
\ema
\end{thm}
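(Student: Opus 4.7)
The plan is to pass to Fourier variables and use the decomposition $G(t,k)=G_1(t,k)+G_2(t,k)$ from Theorem \ref{decomposition 0}, splitting every $L^2_k$--integral into a low frequency piece $|k|\le\tau_0$ (where the slow polynomial decay comes from $G_1$, and $G_2$ is exponentially small) and a high frequency piece $|k|>\tau_0$ (where $G_1\equiv 0$ and only the exponentially decaying $G_2$ remains). By Parseval,
$$\|\dxa e^{tB}f_0\|_{L^2_{x,v}}^2=\int_{\R^3}|k|^{2|\alpha|}\|e^{t\hat{B}(k)}\hat f_0(k,\cdot)\|_{L^2_v}^2dk,$$
and similarly for the macroscopic moments $(\dxa e^{tB}f_0,\psi_j)$. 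On the high--frequency piece Theorem \ref{decomposition 0} gives $\|G(t,k)\hat f_0\|_{L^2_v}\le Ce^{-\sigma_0 t}\|\hat f_0(k,\cdot)\|_{L^2_v}$, which is absorbed into $e^{-2\sigma_0 t}\|\dxa f_0\|_{L^2_{x,v}}^2$; this is much faster than any claimed polynomial rate.

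For the low frequency piece I will use two inputs: first, the eigenvalue expansion of Theorem \ref{Eigenvalues and eigenfunctions}, which (after possibly shrinking $\tau_0$) yields $\mathrm{Re}\,\lambda_j(|k|)\le -c|k|^2$ and hence $|e^{\lambda_j(|k|)t}|\le e^{-c|k|^2 t}$; second, the elementary Gaussian estimate
$$\int_{|k|\le\tau_0}|k|^{2m}e^{-2c|k|^2t}dk\le C(1+t)^{-\frac32-m},\qquad m\ge 0.$$
To transfer the $|\alpha'|$ derivatives into the $Z_1$--norm I write $|k|^{2|\alpha|}=|k|^{2\zeta}\cdot|k|^{2|\alpha'|}$ and use the Minkowski/Hausdorff--Young bound
$$\||k|^{|\alpha'|}\hat f_0\|_{L^\infty_k L^2_v}=\|\widehat{\dx^{\alpha'}f_0}\|_{L^\infty_k L^2_v}\le\|\dx^{\alpha'}f_0\|_{Z_1}.$$
For the macroscopic moments I estimate $|(G_1(t,k)\hat f_0,\psi_j)|\le C e^{-c|k|^2 t}\|\hat f_0(k,\cdot)\|_{L^2_v}$ using $\|e_j\|_{L^2_v}\le C$, and the Gaussian integral with $m=\zeta$ yields $(1+t)^{-\frac32-\zeta}$, giving \eqref{Decay estimate upper limit 1}. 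For the microscopic component I use the asymptotic expansion $e_j(|k|,\omega)=E_j(\omega)+|k|e_{j,1}+O(|k|^2)$ with $E_j\in N_0$, so $\|P_1 e_j(|k|,\omega)\|_{L^2_v}\le C|k|$ uniformly, producing an extra factor $|k|^2$ inside the integrand. The Gaussian estimate with $m=\zeta+1$ then yields the improved rate $(1+t)^{-\frac52-\zeta}$, i.e.\ \eqref{Decay estimate upper limit 2}.

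For the improved rates under $P_0 f_0=0$, I use the key observation that $e_{j,0}=E_j\in N_0$, hence $(\hat f_0,\overline{E_j})=(P_0\hat f_0,\overline{E_j})=0$, and the eigenfunction expansion gives
$$(\hat f_0,\overline{e_j(|k|,\omega)})=|k|(\hat f_0,\overline{e_{j,1}})+O(|k|^2)=O(|k|)\|\hat f_0(k,\cdot)\|_{L^2_v}.$$
This extra factor of $|k|$ multiplies both preceding integrands by $|k|^2$ and, through the Gaussian bound, gains one extra power $(1+t)^{-1/2}$ in each estimate, producing \eqref{Decay estimate upper limit 3}--\eqref{Decay estimate upper limit 4}.

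The main obstacle is bookkeeping: one has to juggle simultaneously three sources of smallness in $|k|$ — the factor $|k|^{2\zeta}$ converted to $t^{-\zeta}$ by the Gaussian integral; the factor $|k|$ from $P_1 e_j$; and (in the second part) the factor $|k|$ from $P_0 f_0=0$ — while also absorbing $|k|^{|\alpha'|}$ into the $Z_1$--norm via Hausdorff--Young. Once this bookkeeping is set up, the proof reduces to direct application of Theorem \ref{decomposition 0}, the eigenvalue/eigenfunction asymptotics of Theorem \ref{Eigenvalues and eigenfunctions}, and the Gaussian integral identity above.
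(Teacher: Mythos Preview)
Your proposal is correct and follows essentially the same route as the paper: Plancherel, the $G_1+G_2$ split from Theorem \ref{decomposition 0}, exponential decay of $G_2$ absorbing the high–frequency and remainder pieces, the bound $\mathrm{Re}\,\lambda_j(|k|)\le -\rho|k|^2$ on $|k|\le\tau_0$, the extra factor $|k|$ from $P_1e_j$ for the microscopic part, and the extra factor $|k|$ from $(\hat f_0,\overline{E_j})=0$ when $P_0f_0=0$. The only cosmetic difference is that the paper records explicit formulas for $(G_1(t,k)\hat f_0,\psi_j)$ in terms of $(\hat n_0,\hat m_0,\hat q_0)$ (equations \eqref{Decay estimate upper limit A}--\eqref{Decay estimate upper limit A2} and \eqref{V_5-1}--\eqref{V_7-1}); these are not needed for the upper bounds here but are reused in the lower–bound Theorem \ref{Decay estimate lower limit}, whereas your abstract bound $|(G_1\hat f_0,\psi_j)|\le Ce^{-c|k|^2t}\|\hat f_0\|_{L^2_v}$ suffices for the present statement.
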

\begin{proof}
By theorem \ref{decomposition 0} and the Planchel's equation,
\bma\label{Decay estimate upper limit 5}
\|(\dxa e^{tB}f_0,\psi_j)\|_{L_x^2}&=\|k^\alpha(G(t,k)\hat{f}_0,\psi_j)\|_{L_k^2}\nnm\\
&\le \|k^\alpha(G_1(t,k)\hat{f}_0,\psi_j)\|_{L_k^2}+\|k^\alpha(G_2(t,k)\hat{f}_0,\psi_j)\|_{L_k^2}.
\ema
We can estimate the second terms on the right hand of \eqref{Decay estimate upper limit 5} as follows:
\bma\label{Decay estimate upper limit 6}
\|k^\alpha(G_2(t,k)\hat{f}_0,\psi_j)\|_{L_k^2}^2 &\le \intr(k^\alpha)^2\|G_2(t,k)\hat{f}_0\|_{L_{v}^2}^2dk\nnm\\
&\le C\intr e^{-2\sigma_0t}(k^\alpha)^2\|\hat{f}_0\|_{L_v^2}^2dk\le Ce^{-2\sigma_0t}\|\dxa\hat{f}_0\|^2_{L_{x,v}^2}.
\ema
Next, we establish estimation the first term in the right hand side of \eqref{Decay estimate upper limit 5}. By \eqref{decomposition}, we have for $|k|\le \tau_0$,
\bq
G_1(t,k)\hat{f}_0=\sum_{j=-1}^3 e^{t\lambda_j(|k|)}[(\hat{f}_0,g_0)g_0+|k|T_j(k)\hat{f}_0],
\eq
where $T_j(k),-1\le j \le 3$ is the linear operator with the norm $\|T_j(k)\|$ being uniformly bounded for  $|k|\le \tau_0$. Therefore, we have
\bma\label{Decay estimate upper limit A}
(G_1(t,k)\hat{f}_0,\psi_0)=
&\ \frac{1}{2}B_1\sum_{j=\pm1}e^{t\lambda_j(|k|)}(B_1\hat{n}_0-j(\hat{m}_0\cdot\omega)+B_2\hat{q}_0)
\nnm\\
&\ -B_2e^{t\lambda_0(|k|)}( -B_2\hat{n}_0+B_1\hat{q}_0)+|k|\sum_{j=-1}^3e^{t\lambda_j(|k|)}(T_j(k)\hat{f}_0,\psi_0),
\\
\label{Decay estimate upper limit A1}
(G_1(t,k)\hat{f}_0,\psi')=&\ -\frac{1}{2}\sum_{j=\pm1}e^{t\lambda_j(|k|)}j(B_1\hat{n}_0 -j(\hat{m}_0\cdot\omega)+B_2\hat{q}_0)\omega
\nnm\\
&\ +\sum_{j=2,3}e^{t\lambda_j(|k|)}(\hat{m}_0\cdot W_j)W_j+|k|\sum_{j=-1}^3e^{t\lambda_j(|k|)}(T_j(k)\hat{f}_0,\psi'),
\\
\label{Decay estimate upper limit A2}
(G_1(t,k)\hat{f}_0,\psi_4)=
&\frac{1}{2}B_2\sum_{j=\pm1}e^{t\lambda_j(|k|)}(B_1\hat{n}_0-j(\hat{m}_0\cdot\omega)+B_1\hat{q}_0)
\nnm\\
&\ +B_1e^{t\lambda_0(|k|)}( B_1\hat{q}_0-B_2\hat{n}_0)+|k|\sum_{j=-1}^3e^{t\lambda_j(|k|)}(T_j(k)\hat{f}_0,\psi_4),
\ema
where $B_1=\sqrt{\frac{b^2}{a^2+b^2}}$, $B_2=\sqrt{\frac{a^2}{a^2+b^2}}$, $(\hat{n}_0, \hat{m}_0, \hat{q}_0)=((\hat f_0,\psi_0),(\hat f_0,\psi'),(\hat f_0,\psi_4))$ is the Fourier transform of the macroscopic density, momentum and energy of the initial data $f_0$, $W_j$ is given by theorem \ref{Eigenvalues and eigenfunctions} and
\bq\label{Decay estimate upper limit 7}
P_1(G_1(t,k)\hat{f}_0)=|k|\sum_{j=-1}^3e^{t\lambda_j(|k|)}P_1(T_j(k)\hat{f}_0).
\eq
Since
\bq\label{Decay estimate upper limit 8}
\mathrm{Re}\lambda_j(|k|)=A_{j}|k|^2(1+O(|k|))\le -\rho|k|^2, \quad  |k|\le \tau_0,
\eq
we have
\bma\label{Decay estimate upper limit a}
 \|k^\alpha(G_1(t,k)\hat{f}_0,\psi_j)\|_{L_k^2}^2
&\ \le C\int_{|k|\le \tau_0}(k^{\alpha-\alpha'})^2e^{-2\rho|k|^2t}(|k^{\alpha'}(\hat{n}_0,\hat{m}_0,\hat{q}_0)|^2 +|k|^2\|k^{\alpha'}\hat{f}_0\|^2_{L_v^2})dk\nnm\\
&\ \le C(1+t)^{-(\frac{3}{2}+\zeta)}(\|\partial_x^{\alpha'}(n_0,m_0,q_0)\|^2_{L^1_x} +|\partial_x^{\alpha'}f_0\|^2_{Z_1}) \nnm\\
&\ \le C(1+t)^{-(\frac{3}{2}+\zeta)}\|\partial_x^{\alpha'}f_0\|^2_{Z_1},
\ema
where $j=-1,0,1,2,3$.
Combining \eqref{Decay estimate upper limit 5}, \eqref{Decay estimate upper limit 6} and \eqref{Decay estimate upper limit a}, we verify that \eqref{Decay estimate upper limit 1} holds.

For $P_1(\dxa e^{tB}f_0)$, we have estimation that
\bq\label{Decay estimate upper limit P1}
\|P_1(\dxa e^{tB}f_0)\|^2_{L^2_{x,v}}\le\|k^\alpha P_1(G_1(t,k)\hat{f}_0)\|^2_{L^2_{k,v}}+\|k^\alpha P_1(G_2(t,k)\hat{f}_0)\|^2_{L^2_{k,v}}.
\eq
By \eqref{Decay estimate upper limit 7} and \eqref{Decay estimate upper limit 8}, we have
\bma\label{Decay estimate upper limit 11}
\|k^\alpha P_1(G_1(t,k)\hat{f}_0)\|^2_{L^2_{k,v}}&\ \le C\int_{|k|\le \tau_0}(k^{\alpha-\alpha'})^2e^{-2\rho|k|^2t}\|k^{\alpha'}\hat{f}_0\|^2_{L^2_v}dk\nnm\\
&\ \le C(1+t)^{-(\frac{5}{2}+\zeta)}\|\partial_x^{\alpha'}f_0\|_{Z_1},
\ema
Combing \eqref{Decay estimate upper limit 5}, \eqref{Decay estimate upper limit 6}, \eqref{Decay estimate upper limit P1} and \eqref{Decay estimate upper limit 11}, we verify that \eqref{Decay estimate upper limit 2} holds.

For the case $P_0f_0=0$, we have
\bq
 G_1(t,k)\hat{f}_0=|k|\sum_{j=-1}^3e^{\lambda_j(|k|)t} (\hat{f}_0,P_1(\overline{e_{j,1}}))e_{j,0}+|k|^2T_4(t,k)\hat{f}_0 ,\quad |k|\le \tau_0,
 \eq
 where $T_4(t,k)$ is also a linear operator satisfying for $|k|\le \tau_0$ that
 \bq
 \|T_4(t,k)\hat{f}\|_{L^2_v}\le Ce^{-\rho|k|^2t}\|\hat{f}\|_{L^2_v}.
 \eq
Then we have
after a direct computation that
\bma
(G_1(t,k)\hat{f}_0,\psi_0)
&=i\frac12B_1|k|\sum_{j=\pm1}e^{\lambda_j(|k|)t}(B_1\eta-j(\theta\cdot\omega)+B_2 \gamma)
\nnm\\
&\quad-i B_2 |k|e^{\lambda_0(|k|)t}(-B_2\eta+B_1 \gamma)+|k|^2 (T_4(t,k)\hat{f}_0,\psi_0),\label{V_5-1}\\
(G_1(t,k)\hat{f}_0,\psi')
&=-i\frac{1}{2}|k|\sum_{j=\pm1}e^{\lambda_j(|k|)t}j(B_1\eta-j(\theta\cdot\omega)+B_2 \gamma)\omega
\nnm\\
&\quad+ i|k|\sum_{j=2,3}e^{\lambda_j(|k|)t}(\theta\cdot W_j)W_j+|k|^2 (T_4(t,k)\hat{f}_0,\psi'),\label{V_6-1}\\
(G_1(t,k)\hat{f}_0,\psi_4)
&=i\frac12B_2|k|\sum_{j=\pm1}e^{\lambda_j(|k|)t}(B_1\eta-j(\theta\cdot\omega)+B_2 \gamma)
\nnm\\
&\quad+i B_1 |k|e^{\lambda_0(|k|)t}(-B_2\eta+B_1 \gamma)+|k|^2 (T_4(t,k)\hat{f}_0,\psi_4),\label{V_7-1}
\ema
where $\eta,\theta$ and $\gamma$ are defined by
\bq
\eta=(\hat f_0,L^{-1} P_1(\hat{v}\cdot\omega)\psi_0),\quad \theta=(\hat f_0,L^{-1} P_1(\hat{v}\cdot\omega)\psi'),\quad \gamma=(\hat f_0,L^{-1} P_1(\hat{v}\cdot\omega)\psi_4),\label{ab}
\eq
and
\bq  P_1(G_1(t,k)\hat f_0)=|k|^2 P_1(T_4(t,k)\hat f_0).\label{V_8-1}\eq

 Similarly, repeating the above steps, we can establish time-decay estimation \eqref{Decay estimate upper limit 3}--\eqref{Decay estimate upper limit 4}.
\end{proof}

We can also prove that the above time-decay rates are indeed optimal in following sense.

\begin{thm}\label{Decay estimate lower limit}Assume that $f_0\in H^N\cap Z_1$ for $N\ge0$ and there exist positive constants $d_0$, $d_1$ such that $\hat{f}_0$ satisfies that $\inf_{k\le \tau_0}|(\hat{f_0},\psi_0)|\ge d_0$, $ \sup_{|k|\le \tau_0}|(\hat{f}_0,b\psi_4-a\psi_0)|=0$ and $\sup_{k\le \tau_0}|(\hat{f}_0,\psi')|=0$ with $\psi'=(\psi_1,\psi_2,\psi_3)$. Then the global solution $f(t,x,v)=e^{tB}f_0(x,v)$ to the linear relativistic Boltzmann equation \eqref{lrb} satisfies
\bma\label{Decay estimate lower limit 1}
&\ C_1(1+t)^{-\frac{3}{4}}\le\|(e^{tB}f_0,\psi_j)\|_{L_x^2}\le C_2(1+t)^{-\frac{3}{4}},\quad j=0, 4,\\
\label{Decay estimate lower limit 1a}&\ C_1(1+t)^{-\frac{3}{4}}\le\|(e^{tB}f_0,\psi')\|_{L_x^2}\le C_2(1+t)^{-\frac{3}{4}}, \\
\label{Decay estimate lower limit 2}&\ C_1(1+t)^{-\frac{5}{4}}\le\|P_1(e^{tB}f_0)\|_{L_{x,v}^2}\le C_2(1+t)^{-\frac{5}{4}},
\ema
for $t>0$  large and two positive constants $C_2\ge C_1$.

If it  holds that $P_0f_0=0$, $\inf_{k\le \tau_0}|\hat{f}_0,L^{-1}P_1(\hat{v}\cdot\omega)(\omega\cdot\psi')|\ge d_0$ and $\sup_{k\le \tau_0}|(\hat{f}_0,L^{-1}P_1(\hat{v}\cdot\omega)\psi_j)|=0$ for $j=0,4$ and $\omega=k/|k|$, then
\bma\label{Decay estimate lower limit 4}
&\ C_1(1+t)^{-\frac{5}{4}}\le\|(e^{tB}f_0,\psi_j)\|_{L_x^2}\le C_2(1+t)^{-\frac{5}{4}},\quad j=0, 4,\\
\label{Decay estimate lower limit 4a}&\ C_1(1+t)^{-\frac{5}{4}}\le\|(e^{tB}f_0,\psi')\|_{L_x^2}\le C_2(1+t)^{-\frac{5}{4}}, \\
\label{Decay estimate lower limit 5}&\ C_1(1+t)^{-\frac{7}{4}}\le\|P_1(e^{tB}f_0)\|_{L_{x,v}^2}\le C_2(1+t)^{-\frac{7}{4}},
\ema
for $t>0$  large.
\end{thm}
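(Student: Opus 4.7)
The upper estimates in Theorem~\ref{Decay estimate lower limit} are direct consequences of Theorem~\ref{Decay estimate upper limit} applied with $\alpha=\alpha'=0$, so the task reduces to establishing the matching lower bounds. My strategy is to combine the semigroup splitting $G(t,k)=G_1(t,k)+G_2(t,k)$ from Theorem~\ref{decomposition 0} with the explicit low-frequency expansions \eqref{Decay estimate upper limit A}--\eqref{Decay estimate upper limit 7} (respectively \eqref{V_5-1}--\eqref{V_8-1} in the second case). Under the cancellation conditions imposed on $\hat f_0$, exactly one family of oscillating Gaussian modes survives at the relevant order, and its $L^2_k$-norm is then computed via an oscillatory-integral estimate.

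Since $\|G_2(t,k)\hat f_0\|_{L^2_v}\le Ce^{-\sigma_0 t}\|\hat f_0\|_{L^2_v}$ uniformly in $k$, the triangle inequality reduces the problem to lower bounds on the $G_1$-contribution alone; the $G_2$-part is exponentially smaller than any polynomial rate. For the first part I substitute $\hat m_0\equiv 0$ and $b\hat q_0=a\hat n_0$ into \eqref{Decay estimate upper limit A}: the coefficient $-B_2\hat n_0+B_1\hat q_0$ of the $\lambda_0$-mode vanishes, while the $\lambda_{\pm1}$-coefficient $B_1\hat n_0+B_2\hat q_0$ equals $\hat n_0\sqrt{a^2+b^2}/b$, whose modulus is bounded below by $cd_0$ on $|k|\le\tau_0$. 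Using the expansion of $\lambda_{\pm1}(|k|)$ from Theorem~\ref{global spectrum} and writing $e^{t\lambda_1}+e^{t\lambda_{-1}}=2e^{t\mathrm{Re}\lambda_1(|k|)}\cos(\sqrt{a^2+b^2}|k|t)+O(|k|^3 t)$, one finds $(G_1(t,k)\hat f_0,\psi_0)=cB_1\hat n_0\cos(\sqrt{a^2+b^2}|k|t)e^{t\mathrm{Re}\lambda_1(|k|)}+|k|\mathcal R(t,k)$ with $\mathrm{Re}\lambda_{\pm1}(|k|)\le-\rho|k|^2$ and $\mathcal R$ uniformly bounded. Squaring and using $\cos^2\theta=\tfrac12+\tfrac12\cos 2\theta$, the non-oscillatory half contributes $\gtrsim t^{-3/2}$ after the Gaussian scaling $k=t^{-1/2}\eta$, proving \eqref{Decay estimate lower limit 1} for $\psi_0$. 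The identical argument applied to \eqref{Decay estimate upper limit A1} (producing $\sin^2$ from the $\lambda_{\pm1}$-difference, since $\hat m_0\equiv0$) and \eqref{Decay estimate upper limit A2} yields \eqref{Decay estimate lower limit 1a} and the $\psi_4$-case, while the prefactor $|k|$ in \eqref{Decay estimate upper limit 7} upgrades the rate $t^{-3/2}$ to $t^{-5/2}$ in the $L^2_{k,v}$-norm and hence delivers \eqref{Decay estimate lower limit 2}.

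The second case $P_0f_0=0$ is treated identically using \eqref{V_5-1}--\eqref{V_8-1}: the hypotheses $\eta\equiv \gamma\equiv 0$ and $|\theta\cdot\omega|\ge d_0$ on $|k|\le\tau_0$ eliminate the $\lambda_0$-contribution and leave only $\lambda_{\pm1}$-modes at the leading $O(|k|)$ level, producing one extra power of $|k|$ in the integrand and thus the faster rates $t^{-5/4}$ and $t^{-7/4}$ claimed in \eqref{Decay estimate lower limit 4}--\eqref{Decay estimate lower limit 5}.

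\noindent\textbf{Main obstacle.} The principal technical point is controlling the oscillatory integral
\bq\label{Iplan}
J(t)=\int_{|k|\le\tau_0}\cos(2\sqrt{a^2+b^2}|k|t)\,e^{-2\rho|k|^2 t}\,dk,
\eq
which must satisfy $|J(t)|=o(t^{-3/2})$ in order for the non-oscillatory half of $\cos^2$ to dominate for $t$ large. Passing to spherical coordinates reduces \eqref{Iplan} to a one-dimensional integral $\int_0^{\tau_0}r^2 e^{-2\rho r^2 t}\cos(2\sqrt{a^2+b^2}rt)\,dr$, which I expect to bound by $Ct^{-2}$ via a single integration by parts in $r$, the boundary term at $r=\tau_0$ being exponentially small in $t$. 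Once this estimate is in place, the higher-order phase and amplitude corrections $O(|k|^3 t)$ in $e^{t\lambda_{\pm1}(|k|)}$ are absorbed on the effective Gaussian support $|k|\lesssim t^{-1/2}$, on which $|k|^3 t\lesssim t^{-1/2}$, and the $O(|k|)$ macroscopic remainders contribute at most $O(t^{-5/2})$, which is negligible compared to the main $t^{-3/2}$ lower bound. Combining all these ingredients yields the claimed lower estimates for all $t$ sufficiently large.
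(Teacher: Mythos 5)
Your proposal follows the same architecture as the paper's proof: reduce to the $G_1$-contribution by the exponential smallness of $G_2$, use the cancellation hypotheses on $\hat f_0$ (here $\hat m_0\equiv 0$, $b\hat q_0=a\hat n_0$, respectively $\eta\equiv\gamma\equiv 0$) to kill the $\lambda_0$-mode and leave a single pair of $\lambda_{\pm1}$-oscillations with coefficient bounded below by $d_0$, and then extract a Gaussian lower bound from the $L^2_k$-norm of the damped oscillation. The one genuine methodological divergence is the key scalar integral $I_1=\int_{|k|\le\tau_0}e^{-2\xi|k|^2 t}\cos^2(\sqrt{a^2+b^2}|k|t)\,dk$: you split $\cos^2\theta=\tfrac12+\tfrac12\cos2\theta$ and control the oscillatory half by a single integration by parts to get $O(t^{-2})=o(t^{-3/2})$, whereas the paper rescales $r=|k|\sqrt t$, restricts to a fixed shell $r\in[L/2,L]$ on which $r^2e^{-2\xi r^2}$ is bounded below, and claims the $\cos^2$-integral over the shell is $\gtrsim1$; your variant is cleaner, since the paper's displayed chain drops a $1/(\sqrt{a^2+b^2}\sqrt t)$ Jacobian factor before invoking $\int_0^\pi\cos^2y\,dy$, even though the final $t^{-3/2}$ rate is of course correct. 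One step you should tighten when writing this up: for the lower bound on $\|P_1G_1\hat f_0\|_{L^2_{k,v}}$, the identity \eqref{Decay estimate upper limit 7} alone gives only the upper bound $O(|k|)$; to obtain a matching lower bound you must expand $e_j(|k|,\omega)$ one order further via the explicit $e_{j,1}$ from Theorem \ref{Eigenvalues and eigenfunctions} and exhibit a fixed nonzero $L^2_v$-function (in the paper, $L^{-1}P_1\hat v_1\psi_1$) multiplying $|k|\,\hat n_0\,e^{t\mathrm{Re}\lambda_1}\sin(\mathrm{Im}\lambda_1 t)$, and similarly for the $P_0f_0=0$ case with $L^{-1}P_1(\hat v\cdot\omega)(W_j\cdot\psi')$. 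This is implicit in your ``only one family of modes survives at the relevant order,'' but it is the part of the argument that actually certifies the coefficient does not vanish.
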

\begin{proof}
By \eqref{Decay estimate upper limit 6}, we have
\bma\label{Decay estimate lower limit 7}
\|(e^{tB}f_0,\psi_j)\|_{L_x^2}&\ \ge\|(G_1(t,k)\hat{f}_0,\psi_j)\|_{L^2_k}-\|G_2(t,k)\hat{f}_0\|_{L^2_{k,v}}\nnm\\
&\ \ge\|(G_1(t,k)\hat{f}_0,\psi_j)\|_{L^2_k}-Ce^{-\sigma_0t}\|f_0\|_{L^2_{x,v}}.
\ema
By \eqref{Decay estimate upper limit A} and
\be b\hat q_0=a\hat n_0 ,\quad \hat m_0=0, \quad \lambda_{-1}(|k|)=\overline{\lambda_1(|k|)}, \quad {\rm for } \quad |k|\le \tau_0, \label{con}\ee
we obtain
\bma
|(G_1(t,k)\hat{f}_0,\psi_0)|^2
= &\bigg | \frac{b}{a^2+b^2} (b\hat{n}_0+a\hat{q}_0) e^{{\rm Re}\lambda_1(|k|)t}\cos({\rm Im}\lambda_1(|k|)t)\nnm\\
& +|k|\sum_{j=-1}^3e^{t\lambda_j(|k|)}(T_j(k)\hat{f}_0,\psi_0)\bigg|^2\nnm\\
\ge & \frac{1}{2} |\hat{n}_0|^2 e^{2{\rm Re}\lambda_1(|k|)t}\cos^2({\rm Im}\lambda_1(|k|)t
 -Ce^{-2\rho|k|^2t}|k|^2\|\hat{f}_0\|^2_{L^2_v}.
\ema
Since
\bq
{\rm Re}\lambda_j(|k|)=\lambda_{j,2}|k|^2(1+O(|k|))\ge -\xi|k|^2, \quad |k|\le \tau_0,
\eq
and
\bq
\cos^2({\rm Im}\lambda_1(|k|)t)\ge \frac{1}{2}\cos^2[\sqrt{a^2+b^2}|k|t]-O([|k|^3t]^2),
\eq
it follows that
\bma\label{Decay estimate lower limit 8}
\|(G_1(t,k)\hat{f}_0,\psi_0)\|^2_{L^2_v}\ge
&\ \frac{1}{2}d_0^2\int_{|k|\le \tau_0}\frac{1}{2}e^{-2\xi|k|^2t}\cos^2(\sqrt{a^2+b^2}|k|t)dk\nnm\\
&\ -C\int_{|k|\le \tau_0}e^{-2\rho|k|^2t}(|k|^6t^2|\hat n_0|^2+|k|^2\|\hat{f}_0\|^2_{L^2_v})dk\nnm\\
\ge &\ C\int_{|k|\le \tau_0}e^{-2\xi|k|^2t}\cos^2(\sqrt{a^2+b^2}|k|t)dk -C(1+t)^{-\frac{5}{2}}.
\ema
Set $$I_1=\int_{|k|\le \tau_0}e^{-2\xi|k|^2t}\cos^2(\sqrt{a^2+b^2}|k|t)dk.$$

we can estimate for $t\ge t_0=\frac{L^2}{\tau_0^2}$ with the constant $L\ge\sqrt{\frac{4\pi}{b_1}}$ that
\bma\label{Decay estimate lower limit 9}
I_1=&\ t^{-\frac{3}{2}}\int_{|s|\le \tau_0\sqrt{t}}e^{-2\xi|s|^2}\cos^2(\sqrt{a^2+b^2}|s|\sqrt{t})ds\nnm\\
\ge &\ 4\pi(1+t)^{-\frac{3}{2}}\int_0^1e^{-2\xi r^2}\cdot r^2\cos^2(\sqrt{a^2+b^2}r\sqrt{t})dr\nnm\\
\ge &\ \pi(1+t)^{-\frac{3}{2}}L^2e^{-2\xi L^2}\int_{\frac{L}{2}}^L\cos^2(\sqrt{a^2+b^2}r\sqrt{t})dr\nnm\\
\ge &\ \pi(1+t)^{-\frac{3}{2}}L^2e^{-2\xi L^2}\int_0^{\pi}\cos^2ydy\nnm\\
\ge &\ C(1+t)^{-\frac{3}{2}}.
\ema
By substitute \eqref{Decay estimate lower limit 8} and \eqref{Decay estimate lower limit 9} into \eqref{Decay estimate lower limit 7}, we verify that \eqref{Decay estimate lower limit 1} holds for $j=0$.

By \eqref{Decay estimate upper limit A1} and \eqref{con}, we can calculate
\bma
|(G_1(t,k)\hat{f}_0,\psi')|^2
=&\
\bigg|i(B_1\hat{n}_0+B_2\hat{q}_0)\omega\cdot e^{{\rm Re}\lambda_1(|k|)t}\sin({\rm Im}\lambda_{1}(|k|)t))\nnm\\
&\ +|k|\sum_{j=-1}^3e^{t\lambda_j(|k|)}(T_j(k)\hat{f}_0,\psi')\bigg|^2\nnm\\
\ge &\
\frac{1}{2B_1^2} |\hat{n}_0|^2  e^{2{\rm Re}\lambda_1(|k|)t}\sin^2({\rm Im}\lambda_{1}(|k|)t) -C|k|^2 e^{-2\rho|k|^2t}\|\hat{f}_0\|^2_{L^2_v}.
\ema
In terms of the fact that
\bq
\sin^2({\rm Im}\lambda_1(|k|)t)\ge\frac{1}{2}\sin^2(\sqrt{a^2+b^2}|k|t)-O([|k|^3t]^2),
\eq
we have
\bma\label{Decay estimate lower limit 10}
\|(G_1(t,k)\hat{f}_0,\psi')\|^2_{L^2_k}\ge &\
\frac{1}{2B_1^2}d_0^2\int_{|k|\le \tau_0}e^{-2\xi|k|^2t}\sin^2(\sqrt{a^2+b^2}|k|t)dk\nnm\\
&\ -C\int_{|k|\le \tau_0}e^{-2\rho|k|^2t}(|k|^6t^2|\hat n_0|^2+|k|^2\|\hat{f}_0\|^2_{L^2_v})dk\nnm\\
\ge &\
\frac{1}{2B_1^2}d_0^2\int_{|k|\le \tau_0}e^{-2\xi|k|^2t}\sin^2(\sqrt{a^2+b^2}|k|t)dk  -C(1+t)^{-\frac{5}{2}}.
\ema
Set $$I_2=\int_{|k|\le \tau_0}e^{-2\xi|k|^2t}\sin^2(\sqrt{a^2+b^2}|k|t)dk.$$
It holds for $t\ge t_0=\frac{L^2}{\tau_0^2}$ with the constant $L>4\pi$ that
\bma\label{Decay estimate lower limit 11}
I_2=&\ t^{-\frac{3}{2}}\int_{|s|\le \tau_0\sqrt{t}}e^{-2\xi|s|^2}\sin^2(\sqrt{a^2+b^2}|s|\sqrt{t})ds\nnm\\
\ge &\ 4\pi(1+t)^{-\frac{3}{2}}\int_0^1e^{-2\xi r^2}\cdot r^2\sin^2(\sqrt{a^2+b^2}r\sqrt{t})dr\nnm\\
\ge &\ \pi(1+t)^{-\frac{3}{2}}L^2e^{-2\xi L^2}\int_{\frac{L}{2}}^L\sin^2(\sqrt{a^2+b^2}r\sqrt{t})dr\nnm\\
\ge &\ \pi(1+t)^{-\frac{3}{2}}L^2e^{-2\xi L^2}\int_0^{\pi}\sin^2ydy\nnm\\
\ge &\ C(1+t)^{-\frac{3}{2}}.
\ema
Substitute \eqref{Decay estimate lower limit 10} and \eqref{Decay estimate lower limit 11} into \eqref{Decay estimate lower limit 7}, we prove \eqref{Decay estimate lower limit 1a}.

By \eqref{Decay estimate upper limit A2} and \eqref{con}, we can calculate
\bma
|(G_1(t,k),\psi_4)|^2= &\ \bigg| \frac{a}{a^2+b^2}(b\hat{n}_0+a\hat{q}_0) e^{{\rm Re}\lambda_1(|k|)t}\cos({\rm Im}\lambda_1(|k|)t)) \nnm\\
&\ +|k|\sum_{j=-1}^3e^{t\lambda_j(|k|)}(T_j(k)\hat{f}_0,\psi_4)\bigg|^2.
\ema
Then, we can get the same conclusion as the estimate term \eqref{Decay estimate lower limit 8} that
\bq\label{Decay estimate lower limit 12}
\|(G_1(t,k),\psi_4)\|^2_{L^2_k}\ge C_1(1+t)^{-\frac{3}{2}} -C_2(1+t)^{-\frac{5}{2}}.
\eq
and the proof process is the same as \eqref{Decay estimate lower limit 1} for $j=0$. Thus, by substitute \eqref{Decay estimate lower limit 12} into \eqref{Decay estimate lower limit 7}, we verify \eqref{Decay estimate lower limit 1} holds for $j=4$. In conclusion, \eqref{Decay estimate lower limit 1} holds for $t>0$ large.

Next, we prove \eqref{Decay estimate lower limit 2}. By \eqref{Decay estimate upper limit 6}, we have
\bma\label{Decay estimate lower limit 13}
\|P_1(e^{tB}f_0)\|_{L^2_{x,v}}\ge &\ \|P_1(G_1(t,k)\hat{f}_0)\|_{L^2_{k,v}}-\|P_1(G_2(t,k)\hat{f}_0)\|_{L^2_{k,v}}\nnm\\
\ge &\ \|P_1(G_1(t,k)\hat{f}_0)\|_{L^2_{k,v}}-Ce^{-\sigma_0t}\|f_0\|_{L^2_{x,v}}.
\ema
By \eqref{eigf1-1} and \eqref{Decay estimate upper limit 7}, we have
\bma
&\ P_1(G_1(t,k)\hat{f}_0)\nnm\\
=&\ |k|\sum_{j=-1}^3e^{\lambda_j(|k|)t}(\hat{f}_0,e_{j,0}) P_1(e_{j,1})+|k|^2T_5(t,k)\hat{f}_0\nnm\\
=&\ -i\sqrt{\frac12}|k|\sum_{j=\pm 1} e^{t\lambda_{j}(|k|)}( B_1\hat{n}_0+B_2\hat{q}_0) L^{-1}P_1(\hat{v}\cdot\omega)e_{j,0 }  +|k|^2T_5(t,k)\hat{f}_0\nnm\\
= &\ - i|k| B_1^{-1}\hat{n}_0 e^{ {\rm Re}\lambda_{1}(|k|)t}\cos({\rm Im}\lambda_1(k)t) L^{-1}P_1(\hat{v}\cdot\omega)(B_1\psi_0+B_2\psi_4) \nnm\\
&\  - i B_1^{-1}\hat{n}_0e^{ {\rm Re}\lambda_{1}(|k|)t}\sin({\rm Im}\lambda_1(k)t) L^{-1}P_1(\hat{v}\cdot\omega)(\omega\cdot\psi') +|k|^2T_5(t,k)\hat{f}_0,
\ema
where $T_5(t,k)$ satisfies
\bq
\|T_5(t,k)\hat{f}_0\|^2_{L^2_v}\le Ce^{-2\rho|k|^2t}\|\hat{f}_0\|^2_{L^2_v}.
\eq
Therefore,
\bma
\|P_1(G_1(t,k)\hat{f}_0)\|^2_{L^2_v}\ge &\ \frac{1}{2B_1^2}|k|^2|\hat{n}_0|^2\| L^{-1}P_1 \hat{v}_1(B_1\psi_0+B_2\psi_4)\|^2_{L^2_v}  e^{ 2{\rm Re}\lambda_{1}(|k|)t}\cos^2({\rm Im}\lambda_1(|k|)t)\nnm\\
&\ +\frac{1}{2B_1^2}|k|^2 |\hat{n}_0|^2\| L^{-1}P_1 \hat{v}_1\psi_1 \|^2_{L^2_v} e^{ {\rm Re}\lambda_{1}(|k|)t}\sin^2({\rm Im}\lambda_1(|k|)t)\nnm\\
&\ -C|k|^4e^{-2\rho|k|^2t}\|\hat{f}_0\|^2_{L^2_v}.
\ema
Further, by direct calculation, we have
\bma\label{Decay estimate lower limit 14}
&\ \|P_1(G_1(t,k)\hat{f}_0)\|^2_{L^2_{k,v}}\nnm\\
\ge &\
\frac{1}{2B_1^2}d_0^2\| L^{-1}P_1 \hat{v}_1\psi_1 \|^2_{L^2_v}\int_{|k|\le \tau_0}|k|^2e^{-2\xi|k|^2t}\sin^2[\sqrt{a^2+b^2}t]dk\nnm\\
&\  -C\int_{|k|\le \tau_0}e^{-2\rho|k|^2t}(|k|^4t)^2|\hat{n}_0|^2dk-C\int_{|k|\le \tau_0}|k|^4e^{-2\rho|k|^2t}\|\hat{f}_0\|^2_{L^2_v}dk\nnm\\
\ge &\ C(1+t)^{-\frac{5}{2}}-C(1+t)^{-\frac{7}{2}}.
\ema
By substitute \eqref{Decay estimate lower limit 14} into \eqref{Decay estimate lower limit 13}, we verify \eqref{Decay estimate lower limit 2} holds for $t>0$ large.

For the case $P_0f_0=0$, when $|k|\le k_0$ and $(f_0,L^{-1}P_1(\hat{v}\cdot\omega)\psi_j)=0$ for $j=0,4$, we have
\bma
(G_1(t,k)\hat{f}_0,\psi_0)=&\ -\frac{1}{2}B_1i|k| \sum_{j=\pm1}e^{t\lambda_j(|k|)}jh+|k|^2(T_4(t,k)\hat{f}_0,\psi_0),\\
(G_1(t,k)\hat{f}_0,\psi')=&\ \frac{1}{2}i|k| \sum_{j=\pm1}e^{t\lambda_j(|k|)}jh\omega+i|k|\sum_{j=2,3}e^{t\lambda_j(|k|)}l_jW_j+|k|^2(T_4(t,k)\hat{f}_0,\psi'),\\
(G_1(t,k)\hat{f}_0,\psi_4)=&\ -\frac{1}{2}B_2i|k| \sum_{j=\pm1}e^{t\lambda_j(|k|)}jh+|k|^2(T_4(t,k)\hat{f}_0,\psi_4),\\
P_1(G_1(t,k)\hat{f}_0)=&\ \frac{1}{2}|k|^2\sum_{j=\pm1}e^{t\lambda_j(|k|)}h (-jL^{-1}P_1(\hat{v}\cdot\omega)(\omega\cdot\psi')+
 L^{-1}P_1(\hat{v}\cdot\omega)(B_1\psi_0+B_2\psi_4))\nnm\\
&\ +|k|^2\sum_{j=2,3}e^{t\lambda_j(|k|)}l_j  L^{-1}P_1(\hat{v}\cdot\omega)(W_j\cdot\psi')+|k|^3T_6(t,k)\hat{f}_0,
\ema
where $h=(f_0,L^{-1}P_1(\hat{v}\cdot\omega)(\omega\cdot\psi'))$, $l_j=(f_0,L^{-1}P_1(\hat{v}\cdot\omega)(W_j\cdot\psi'))$, $W_j$ is given by theorem \ref{Eigenvalues and eigenfunctions} and $T_6(t,k)$ satisfies
\bq
\|T_6(t,k)\hat{f}_0\|^2_{L^2_v}\le Ce^{-2\rho|k|^2t}\|\hat{f}_0\|^2_{L^2_v}.
\eq
By direct calculation, we have
\bma
|(G_1(t,k)\hat{f}_0,\psi_0)|^2\ge &\ \frac{1}{2}B_2^2|k|^2|h|^2 e^{2{\rm Re}\lambda_1(|k|)t}\sin^2({\rm Im}\lambda_1(|k|)t) -C|k|^4e^{-2\rho|k|^2t}\|\hat{f}_0\|^2_{L^2_v},\\
|(G_1(t,k)\hat{f}_0,\psi')|^2\ge
&\ \frac{1}{2}|k|^2|h|^2e^{2{\rm Re}\lambda_1(|k|)t}\sin^2({\rm Im}\lambda_1(|k|)t)-C|k|^4e^{-2\rho|k|^2t}\|\hat{f}_0\|^2_{L^2_v},\\
|(G_1(t,k)\hat{f}_0,\psi_4)|^2\ge &\ \frac{1}{2}B_2^2|k|^2|h|^2 e^{2{\rm Re}\lambda_1(|k|)t}\sin^2({\rm Im}\lambda_1(|k|)t) -C|k|^4e^{-2\rho|k|^2t}\|\hat{f}_0\|^2_{L^2_v},\\
\|P_1(G_1(t,k)\hat{f}_0)\|^2_{L^2_v}\ge &\ \frac{1}{2}|k|^4|h|^2e^{2{\rm Re}\lambda_1(|k|)t}\sin^2({\rm Im}\lambda_1(|k|)t)\|L^{-1}P_1 (\hat{v}_1 \psi_1)\|_{L^2_v}-C|k|^6e^{-2\rho|k|^2t}\|\hat{f}_0\|^2_{L^2_v}.
\ema
Then, similarly, repeating the above steps, we can verify \eqref{Decay estimate lower limit 4}--\eqref{Decay estimate lower limit 5}. This proves the theorem.
\end{proof}

\section{The original nonlinear problem}
\label{sect4}
\setcounter{equation}{0}
In this section, we prove the long time decay rates of the solution to the Cauchy problem for relativistic Boltzmann equation with help of the asymptotic behaviors of linearized problem established in Section 3.

\begin{proof}[\textbf{Proof of Theorem \ref{nonlinear upper}}]
Let $f$ be a solution to RB equation \eqref{rb2} for $t>0$. We can represent this solution in terms of the semigroup $e^{tB}$ as
\bq\label{semigroup decompose}
f(t)=e^{tB}f_0+\int_0^te^{(t-s)B}\Gamma(f,f)ds.
\eq
For this global solution $f$, we define two functionals $Q_1(t)$ and $Q_2(t)$ for any $t>0$ as
\bmas
Q_1(t)=\sup_{0\le s\le t}\sum_{|\alpha|=0}^1
&\ \{(1+s)^{\frac{3}{4}+\frac{|\alpha|}{2}}\sum_{j=0}^4\|\dxa(f(s),\psi_j)\|_{L_x^2}+(1+s)^{\frac{5}{4}+\frac{|\alpha|}{2}}\|\dxa P_1f(s)\|_{\lxvtwo}\\
&\ +(1+s)^{\frac{5}{4}}(\|P_1f(s)\|_{ H_{N,1}}+\|\nabla_xP_0f(s)\|_{H^{N-1}})\},
\emas
and
\bmas
Q_2(t)=\sup_{0\le s\le t}\sum_{|\alpha|=0}^1
&\ \{(1+s)^{\frac{5}{4}+\frac{|\alpha|}{2}}\sum_{j=0}^4\|\dxa(f(s),\psi_j)\|_{L_x^2}+(1+s)^{\frac{7}{4}+\frac{|\alpha|}{2}}\|\dxa P_1f(s)\|_{\lxvtwo}\\
&\ +(1+s)^{\frac{7}{4}}(\|P_1f(s)\|_{ H_{N,1}}+\|\nabla_xP_0f(s)\|_{H^{N-1}})\}.
\emas

We claim that it holds under the assumptions of Theorem \ref{nonlinear upper} that
\bq\label{nonQ1}
Q_1(t)\le C\delta_0,
\eq
and if it is further satisfied $P_0f_0=0$ that
\bq\label{nonQ2}
Q_2(t)\le C\delta_0.
\eq
It is easy to verify that the estimates \eqref{nonlinear upper1}--\eqref{nonlinear upper3} and \eqref{nonlinear upper4}--\eqref{nonlinear upper6} from \eqref{nonQ1} and \eqref{nonQ2} respectively.

First of all, we prove the claim \eqref{nonQ1} as follows. From \cite{Yang1}, it holds that for any  $\alpha\in [0,1]$,
  \be
  \|\nu^{-\alpha}\Gamma(f,g)\|_{L^{2}_{v} }\leq C (\|\nu^{1-\alpha}f\|_{L^{2}_{v} }\|g\|_{L^{2}_{v} }+\|f\|_{L^{2}_{v} }\|\nu^{1-\alpha}g\|_{L^{2}_{v}}).
  \ee
Thus, we can estimate the nonlinear term $\Gamma(f,f)$ for $0\le s\le t$ in the terms of $Q_1(t)$ as
\bmas
\|\Gamma(f,f)\|_{\lxvtwo}&\ \le C\|\nu f\|_{Z_3}\|f\|_{Z_6}\le C(1+s)^{-2}Q_1(t)^2,\\
\|\Gamma(f,f)\|_{Z_1}&\ \le C\|\nu f\|_{\lxvtwo}\|f\|_{\lxvtwo}\le C(1+s)^{-\frac{3}{2}}Q_1(t)^2,
\emas
and
\bmas
\|\nabla_x\Gamma(f,f)\|_{\lxvtwo}&\ \le C\|\nu \nabla_xf\|_{Z_3}\|f\|_{Z_6}+\|\nu f\|_{Z_6}\|\nabla_xf\|_{Z_3}\\
&\ \le C(1+s)^{-\frac52}Q_1(t)^2,\\
\|\nabla_x\Gamma(f,f)\|_{Z_1}&\ \le C\|\nu \nabla_xf\|_{\lxvtwo}\|f\|_{\lxvtwo}+\|\nu f\|_{\lxvtwo}\|\nabla_xf\|_{\lxvtwo}\\
&\ \le C(1+s)^{-2}Q_1(t)^2.
\emas
Since the nonlinear term $\Gamma(f,f)$ satisfies $P_0\Gamma(f,f)=0$, we obtain the long decay rate of the macroscopic part by \eqref{Decay estimate upper limit 1} and \eqref{Decay estimate upper limit 3} as follow:
\bma\label{nonlinear upper f}
\|(f(t),\psi_j)\|_{\lxtwo} \le &\ C(1+t)^{-\frac{3}{4}}(\|f_0\|_{\lxvtwo}+\|f_0\|_{Z_1})\nnm\\
&\ +C\int_0^t(1+t-s)^{-\frac{5}{4}}(\|\Gamma(f,f)\|_{\lxvtwo}+\|\Gamma(f,f)\|_{Z_1})ds\nnm\\
\le &\ C\delta_0(1+t)^{-\frac{3}{4}}+C\int_0^t(1+t-s)^{-\frac{5}{4}}(1+s)^{-\frac{3}{2}}Q_1(t)^2ds\nnm\\
\le &\ C\delta_0(1+t)^{-\frac{3}{4}}+C(1+t)^{-\frac{5}{4}}Q_1(t)^2,
\ema
and
\bma\label{nonlinear upper df}
\|\nabla_x(f(t),\psi_j)\|_{\lxtwo} \le &\ C(1+t)^{-\frac{5}{4}}(\|\nabla_xf_0\|_{\lxvtwo}+\|f_0\|_{Z_1})\nnm\\
&\ +C\int_0^t(1+t-s)^{-\frac{5}{4}}(\|\nabla_x\Gamma(f,f)\|_{\lxvtwo}+\|\Gamma(f,f)\|_{Z_1})ds\nnm\\
\le &\ C\delta_0(1+t)^{-\frac{5}{4}}+C\int_0^t(1+t-s)^{-\frac{5}{4}}(1+s)^{-\frac{3}{2}}Q_1(t)^2ds\nnm\\
\le &\ C\delta_0(1+t)^{-\frac{5}{4}}+C(1+t)^{-\frac{5}{4}}Q_1(t)^2.
\ema

Then, we estimate the time-decay rate of microscopic part $P_1f(t)$ as follows.
We obtain from \eqref{Decay estimate upper limit 2} and \eqref{Decay estimate upper limit 4} that
\bma\label{nonlinear upper p1f}
\|P_1f(t)\|_{\lxvtwo} \le &\ C(1+t)^{-\frac{5}{4}}(\|f_0\|_{\lxvtwo}+\|f_0\|_{Z_1})\nnm\\
&\ +C\int_0^t(1+t-s)^{-\frac{7}{4}}(\|\Gamma(f,f)\|_{\lxvtwo}+\|\Gamma(f,f)\|_{Z_1})ds\nnm\\
\le &\ C\delta_0(1+t)^{-\frac{5}{4}}+C\int_0^t(1+t-s)^{-\frac{7}{4}}(1+s)^{-\frac{3}{2}}Q_1(t)^2ds\nnm\\
\le &\ C\delta_0(1+t)^{-\frac{5}{4}}+C(1+t)^{-\frac{3}{2}}Q_1(t)^2,
\ema
and
\bma\label{nonlinear upper dp1f}
\|\nabla_xP_1f(t)\|_{\lxvtwo} \le &\ C(1+t)^{-\frac{7}{4}}(\|\nabla_xf_0\|_{\lxvtwo}+\|f_0\|_{Z_1})\nnm\\
&\ +C\int_0^{\frac{t}{2}}(1+t-s)^{-\frac{9}{4}}(\|\nabla_x\Gamma(f,f)\|_{\lxvtwo}+\|\Gamma(f,f)\|_{Z_1})ds\nnm\\
&\ +C\int_{\frac{t}{2}}^t(1+t-s)^{-\frac{7}{4}}(\|\nabla_x\Gamma(f,f)\|_{\lxvtwo}+\|\nabla_x\Gamma(f,f)\|_{Z_1})ds\nnm\\
\le &\ C\delta_0(1+t)^{-\frac{7}{4}}+C\int_0^{\frac{t}{2}}(1+t-s)^{-\frac{9}{4}}(1+s)^{-\frac{3}{2}}Q_1(t)^2ds\nnm\\
&\ +C\int_{\frac{t}{2}}^t(1+t-s)^{-\frac{7}{4}}(1+s)^{-2}Q_1(t)^2ds\nnm\\
\le &\ C\delta_0(1+t)^{-\frac{7}{4}}+C(1+t)^{-2}Q_1(t)^2.
\ema

With the help of the priori estimates \eqref{nonlinear upper f}--\eqref{nonlinear upper dp1f}, we can verify the claim \eqref{nonQ1}. Indeed, by direct computation (cf.\cite{Yang1}), there are two functionals $H_1$ and $D_1$ related to the global solution $f$:
\be\bln\label{hl dl expression}
H_1(f)&\ \thicksim\sum_{|\alpha|\le N}\|\nu \da P_1f\|^2_{\lxvtwo}+\sum_{|\alpha|\le N-1}\|\da\nabla_xP_0f\|^2_{\lxvtwo},\\
D_1(f)&\ \thicksim\sum_{|\alpha|\le N}\|\nu^{\frac{3}{2}} \da P_1f\|^2_{\lxvtwo}+\sum_{|\alpha|\le N-1}\|\da\nabla_xP_0f\|^2_{\lxvtwo},
\eln\ee
such that
\bq
H_1(f)\le CD_1(f),
\eq
and
\bq\label{hl and dl}
\frac{d}{dt}H_1(f(t))+\mu D_1(f(t))\le C\|\nabla_xP_0f(t)\|^2_{\lxvtwo}.
\eq
This together with \eqref{nonlinear upper df} leads to
\bma\label{nonlinear upper hl}
H_1(f(t))&\ \le e^{-C\mu t}H_1(f_0)+\int_0^te^{-C\mu(t-s)}\|\nabla_xP_0f(s)\|^2_{\lxvtwo}ds\nnm\\
&\ \le C\delta_0^2 e^{-C\mu t}+\int_0^te^{-C\mu(t-s)}(1+s)^{-\frac{5}{2}}(\delta_0+Q_1(t)^2)^2ds\nnm\\
&\ \le C(1+t)^{-\frac{5}{2}}(\delta_0+Q_1(t)^2)^2.
\ema
Making summing to \eqref{nonlinear upper f}--\eqref{nonlinear upper dp1f} and \eqref{nonlinear upper hl}, we obtain
$$Q_1(t)\le C\delta_0+CQ_1(t)^2,$$
from which the claim \eqref{nonQ1} can be verified provided that $\delta_0>0$ is small enough.

Next, we turn to prove the claim \eqref{nonQ2} for the case $P_0f_0=0$ as follows. Indeed, if it holds $P_0f_0=0$, the macroscopic density, momentum, energy and their spatial derivatives can be estimated as follows:
\bma\label{nonlinear upper f2}
\|(f(t),\psi_j)\|_{\lxtwo}\le &\ C(1+t)^{-\frac{5}{4}}(\|f_0\|_{\lxvtwo}+\|f_0\|_{Z_1})\nnm\\
&\ +C\int_0^t(1+t-s)^{-\frac{5}{4}}(\|\Gamma(f,f)\|_{\lxvtwo}+\|\Gamma(f,f)\|_{Z_1})ds\nnm\\
\le &\ C\delta_0(1+t)^{-\frac{5}{4}}+C\int_0^t(1+t-s)^{-\frac{5}{4}}(1+s)^{-\frac{5}{2}}Q_2(t)^2ds\nnm\\
\le &\ C\delta_0(1+t)^{-\frac{5}{4}}+C(1+t)^{-\frac{5}{4}}Q_2(t)^2,
\ema
and
\bma\label{nonlinear upper df2}
\|\nabla_x(f(t),\psi_j)\|_{\lxtwo}\le &\ C(1+t)^{-\frac{7}{4}}(\|f_0\|_{\lxvtwo}+\|f_0\|_{Z_1})\nnm\\
&\ +C\int_0^t(1+t-s)^{-\frac{7}{4}}(\|\nabla_x\Gamma(f,f)\|_{\lxvtwo}+\|\Gamma(f,f)\|_{Z_1})ds\nnm\\
\le &\ C\delta_0(1+t)^{-\frac{7}{4}}+C\int_0^t(1+t-s)^{-\frac{7}{4}}(1+s)^{-\frac{5}{2}}Q_2(t)^2ds\nnm\\
\le &\ C\delta_0(1+t)^{-\frac{7}{4}}+C(1+t)^{-\frac{7}{4}}Q_2(t)^2,
\ema
where we have used
\bgrs
\|\Gamma(f,f)\|_{\lxvtwo}+\|\Gamma(f,f)\|_{Z_1}\le C(1+s)^{-\frac{5}{2}}Q_2(t)^2,\\
\|\nabla_x\Gamma(f,f)\|_{\lxvtwo}+\|\Gamma(f,f)\|_{Z_1}\le C(1+s)^{-\frac{5}{2}}Q_2(t)^2.
\egrs
In terms of \eqref{Decay estimate upper limit 4} the microscopic part and its spatial derivative can be estimated as
\bma\label{nonlinear upper p1f2}
\|P_1f(t)\|_{\lxvtwo} \le &\ C(1+t)^{-\frac{7}{4}}(\|f_0\|_{\lxvtwo}+\|f_0\|_{Z_1})\nnm\\
&\ +C\int_0^t(1+t-s)^{-\frac{7}{4}}(\|\Gamma(f,f)\|_{\lxvtwo}+\|\Gamma(f,f)\|_{Z_1})ds\nnm\\
\le &\ C\delta_0(1+t)^{-\frac{7}{4}}+C\int_0^t(1+t-s)^{-\frac{7}{4}}(1+s)^{-\frac{5}{2}}Q_2(t)^2ds\nnm\\
\le &\ C\delta_0(1+t)^{-\frac{7}{4}}+C(1+t)^{-\frac{7}{4}}Q_2(t)^2,
\ema
and
\bma\label{nonlinear upper dp1f2}
\|\nabla_xP_1f(t)\|_{\lxvtwo} \le &\ C(1+t)^{-\frac{9}{4}}(\|\nabla_xf_0\|_{\lxvtwo}+\|f_0\|_{Z_1})\nnm\\
&\ +C\int_0^t(1+t-s)^{-\frac{9}{4}}(\|\nabla_x\Gamma(f,f)\|_{\lxvtwo}+\|\Gamma(f,f)\|_{Z_1})ds\nnm\\
\le &\ C\delta_0(1+t)^{-\frac{9}{4}}+C\int_0^t(1+t-s)^{-\frac{9}{4}}(1+s)^{-\frac{5}{2}}Q_2(t)^2ds\nnm\\
\le &\ C\delta_0(1+t)^{-\frac{9}{4}}+C(1+t)^{-\frac{9}{4}}Q_2(t)^2.
\ema
Therefore, with the help \eqref{nonlinear upper df2}, we can obtain by \eqref{hl and dl} that
\bma
H_1(f(t))&\ \le e^{-C\mu t}H_1(f_0)+\int_0^te^{-C\mu(t-s)}\|\nabla_xP_0f(s)\|^2_{\lxvtwo}ds\nnm\\
&\ \le C\delta_0^2 e^{-C\mu t}+\int_0^te^{-C\mu(t-s)}(1+s)^{-\frac{7}{2}}(\delta_0+Q_2(t)^2)^2ds\nnm\\
&\ \le C(1+t)^{-\frac{7}{2}}(\delta_0+Q_2(t)^2)^2.
\ema
This together with \eqref{hl dl expression} yields
$$Q_2(t)\le C\delta_0+CQ_2(t)^2,$$
which implies the claims \eqref{nonQ2} provided that $P_0f_0=0$ and $\delta_0>0$ is small enough.
\end{proof}

\begin{proof}[\textbf{Proof of Theorem \ref{nonlinear lower}}]
By \eqref{semigroup decompose}, Theorem \ref{Decay estimate upper limit} and Theorem \ref{nonlinear upper}, we can establish the lower bounds of the time decay rates of the macroscopic part and the microscopic part of the global solution $f$ when $t>0$ large enough that
\bma
\|(f(t),\psi_j)\|_{\lxtwo}&\ \ge\|(e^{tB}f_0,\psi_j)\|_{\lxtwo}-\int_0^t\|(e^{(t-s)B}\Gamma(f,f),\psi_j)\|_{\lxtwo}ds\nnm\\
&\ \ge C_1\delta_0(1+t)^{-\frac{3}{4}}-C_2\delta_0^2(1+t)^{-\frac{5}{4}},
\ema
and
\bma
\|P_1f(t)\|_{\lxvtwo}&\ \ge\|P_1(e^{tB}f_0)\|_{\lxvtwo}-\int_0^t\|P_1(e^{(t-s)B}\Gamma(f,f))\|_{\lxvtwo}ds\nnm\\
&\ \ge C_1\delta_0(1+t)^{-\frac{5}{4}}-C_2\delta_0^2(1+t)^{-\frac{3}{2}},
\ema
from which and Theorem \ref{nonlinear upper}, we can obtain
\bma
\|f(t)\|_{ H_{N,1}}&\ \ge\|P_0f(t)\|_{\lxvtwo}-\|\nu P_1f(t)\|_{\lxvtwo}-\sum_{1\le |\alpha|\le N}\|\nu\dxa f(t)\|_{\lxvtwo}\nnm\\
&\ \ge 5C_1\delta_0(1+t)^{-\frac{3}{4}}-5C_2\delta_0^2(1+t)^{-\frac{5}{4}}-C_3\delta_0(1+t)^{-\frac{5}{4}}.
\ema
This gives rise to \eqref{nonlinear lower1}--\eqref{nonlinear lower3} for sufficiently $t>0$ and $\delta_0>0$ small enough. By repeating similar arguments, we can prove \eqref{nonlinear lower4}--\eqref{nonlinear lower6}, the detail are omitted .
\end{proof}


\begin{thebibliography}{99}
\setlength{\itemsep}{-4pt}
\renewcommand{\baselinestretch}{1}
\small


\bibitem{Bellomo} N. Bellomo and G. Toscani, On the Cauchy problem for the nonlinear Boltzmann equation global existence uniqueness and asymptotic stability. {\it J. Math. Phys.}, 26, (1985), 334-338.

\bibitem{Cer1} C. Cercignani and M.G. Kremer, The Relativistic Boltzmann Equation: Theory and Applications, Birkh\"{a}user Verlag, (2002).

\bibitem{Duan1}R.-J. Duan and H.-J. Yu, The relativistic Boltzmann equation for soft potentials. {\ Adv. Math.}, 312, (2017), 315-373.

\bibitem{Dudy} M. Dudynski and M. Ekiel-JezEwska, On the linearized relativistic Boltzmann equation. {\it Comm. Math. Phys.}, 115(4), (1988), 607-629.

\bibitem{Dudy2} M. Dudynski and M. Ekiel-JezEwska, Global existence proof for relativistic Boltzmann equation. {\it J. Stat. Phys.}, 66, (1992), 991-1001.

\bibitem{DiPerna} R.J. DiPerna and P.L. Lion, On the Cauchy Problem for Boltzmann equations: Global existence and weak Stability. {\it Ann. Math.}, 130, (1989), 321-366.

\bibitem{Ellis} R.S. Ellis and M.A. Pinsky, The first and second fluid approximations to the linearized Boltzmann equation. {\it J. Math. Pure. Appl.}, 54, (1975), 125-156.

\bibitem{Guo1} Y. Guo, The Boltzmann equation in the whole space. {\it Indiana Univ. Math. J.}, 53, (2004), 1081-1094.

\bibitem{Glassey1} R.T. Glassey, Global solutions to the Cauchy problem for the relativistic Boltzmann equation with vacuum data. {\it Comm. Math. Phys.}, 277, (2006), 189-236.

\bibitem{Glassey2} R.T. Glassey and W.A. Strauss, Asymptotic stability of the relativistic Maxwellian via fourteen moments. {\it Transport Theory Statist. Phys.}, 24, (1995), 657-678.

\bibitem{Glassey3} R.T. Glassey and W.A. Strauss, Asymptotic stability of the relativistic Maxwellian. {\it Publ. Res. Inst. Math. Sci.}, 29, (1993), 301-347.

\bibitem{Glassey4} R.T. Glassey, The Cauchy Problem in Kinetic Theory. {\it Philadephia: SIAM}, (1996).

\bibitem{Hsiao} L. Hsiao and H.-J. Yu, Asymptotic stability of the relativistic Maxwellian. {\it Math. Methods Appl. Sci.}, 29(13), (2006), 1481-1499.

\bibitem{Illner} R. Illner and M. Shinbrot, Global existence for a rare gas in an infinite vacuum. {\it Comm. Math. Phys.}, 95, (1984), 217-226.

\bibitem{Kato}  T. Kato, {\it Perturbation Theory of Linear Operator}. Springer, New York, 1996.

\bibitem{homo}  L. Kaup and B. Kaup, Holomorphic Functions of Several Variables: An Introduction to the Fundamental Theory, Walter de Gruyter,  Berlin, New York 1983.


\bibitem{Lee} M.-Y. Lee, T.-P. Liu and S.-H. Yu, Large time behavier of solutions for the Boltzmann equation with hard
potentials. \emph{Commun. Math. Phys.} 269, 17-37 (2007)

\bibitem{Lin} Yu-Chu  Lin, Haitao Wang  and Kung-Chien Wu,  Quantitative Pointwise Estimate of the Solution of the Linearized Boltzmann Equation, \emph{J. Stat. Phys.}, 171, (2018), 927-964


\bibitem{Liu1} T.-P. Liu and S.-H. Yu, The Green's function and large-time behavior of solutions for the one-dimensional Boltzmann equation. {\it Comm. Pure. Appl. Math.}, 57, (2004), 1543-1608.

\bibitem{Liu2} T.-P. Liu, T. Yang and S.-H. Yu, Energy method for the Boltzmann equation. {\it Phys D}, 188, (2004), 178-192.

\bibitem{Liu3} T.-P. Liu and S.-H. Yu, The Green's function of Boltzmann equation, 3D waves. {\it Bull. Inst.
Math. Acad. Sin. (N. S.)}, 1(1) (2006),  1-78.



\bibitem{Pazy} A. Pazy, {\it Semigroups of Linear Operators and
Applications to Partial Differential Equations}. Applied
Mathematical Sciences, 44. Springer-Verlag, New York, 1983.

\bibitem{Reed} M. Reed and B. Simon, Methods of modern mathematical physics. Academic press, New York (1978).

\bibitem{Strain} R.M. Strain, Asymptotic stability of the relativistic Boltzmann equation for the soft potentials,
Comm. Math. Phys. 300 (2010) 529-597.

\bibitem{Ukai2} S. Ukai, On the existence of global solutions of mixed problem for nonlinear Boltzmann equation. {\it Proc. Japan Acad.}, 50, (1974), 179-184.

\bibitem{Ukai1} S. Ukai and T. Yang, {\it Mathematical Theory of Boltzmann Equation}. Lecture Notes Series-No. 8, Hong Kong: Liu Bie Ju Center for Mathematical Sciences, City University of Hong Kong, (2006).

\bibitem{Ukai3} S. Ukai and T. Yang, The Boltzmann equation in the space $L^2\cap L^\infty_\beta$: Global and time-periodic solutions. {\it Anal. Appl.}, 4(3), (2006), 263-310.

\bibitem{Yang1} T. Yang and H.-J. Yu, Hypocoercivity of the relativistic Boltzmann and Landau equations in the whole space. {\it J. Differ. Equ.}, 248(6), (2010), 1518-1560.

\bibitem{Zhong} M.-Y. Zhong, Optimal time-decay rates of the Boltzmann equation. {\it Sci. China Math.}, 57, (2014) 807-822.

\end{thebibliography}
\end{document}